\documentclass[11pt,a4paper]{amsart}
\usepackage[utf8]{inputenc}
\usepackage[T1]{fontenc}
\usepackage{amsmath,amssymb,amsthm,bm}
\usepackage{graphicx}
\usepackage{hyperref}
\usepackage{geometry}
\usepackage{xcolor}
\usepackage{enumerate}

\usepackage{verbatim}

\newtheorem{theorem}{Theorem}[section]
\newtheorem*{mainresult}{Theorem (sketch)}
\newtheorem{lemma}[theorem]{Lemma}
\newtheorem{prop}[theorem]{Proposition}
\newtheorem{corollary}[theorem]{Corollary}
\theoremstyle{definition}
\newtheorem{definition}[theorem]{Definition}
\newtheorem{assumption}[theorem]{Assumption}
\theoremstyle{remark}
\newtheorem{remark}[theorem]{Remark}

\newcommand{\N}{\mathbb{N}}
\newcommand{\Z}{\mathbb{Z}}

\newcommand{\R}{\mathbb{R}} 
\newcommand{\C}{\mathbb{C}} 

\newcommand{\E}{\mathbb{E}}

\renewcommand{\P}{\mathbb{P}}
\newcommand{\<}{\langle}
\renewcommand{\>}{\rangle}
\newcommand{\Span}{\operatorname{span}}
\newcommand{\st}{\,:\,}
\newcommand{\randgram}{\widehat\Sigma_\Omega}
\newcommand{\tr}{\operatorname{tr}}
\newcommand{\supp}{\operatorname{supp}}
\newcommand{\ran}{\operatorname{Ran}}

\author[L. Finotti]{Luca Finotti} 
\address{MaLGa Center, Department of Mathematics, University of Genoa, Via Dodecaneso 35, 16146, Genova, Italy}
\email{luca.finotti@edu.unige.it}

\author[M. Santacesaria]{Matteo Santacesaria}
\address{MaLGa Center, Department of Mathematics, University of Genoa, Via Dodecaneso 35, 16146, Genova, Italy}
\email{matteo.santacesaria@unige.it}

\title{Stochastic Generalized Sampling}

\keywords{Generalized sampling, weighted least squares, concentration inequalities, sample complexity, frames, Riesz bases}

\begin{document}

\begin{abstract}
    Reconstructing an infinite-dimensional signal from a finite set of measurements is a fundamental problem in approximation theory and signal processing. While the generalized sampling (GS) framework provides a robust methodology for recovering elements in arbitrary separable Hilbert spaces, deterministic approaches suffer from severe basis-dependent dimensionality constraints, often requiring a quadratic sample complexity $m \gtrsim n^2$ to avoid numerical instability. In this paper, we introduce a fully stochastic framework for GS that natively overcomes these deterministic barriers. By drawing measurements according to an optimal leverage-score probability distribution, we prove that stable recovery is guaranteed with high probability at a near-linear sample complexity of $m \gtrsim n\log n$. Crucially, this optimal rate is universal—independent of the specific choice of measurement and reconstruction bases—and holds even when the sensing system is a highly redundant frame. To establish these guarantees, we derive a novel matrix Bernstein inequality for random rectangular operators, allowing us to rigorously control the aliasing error governed by the empirical cross-term. Finally, we demonstrate the practical efficacy of our approach on the classical problem of recovering analytic functions from continuous Fourier measurements via Legendre polynomials, where our randomized method achieve near-exponential convergence rates.
\end{abstract}

\maketitle

\section{Introduction}

A fundamental problem in sampling theory revolves around the reconstruction of an unknown signal from a finite set of measurements. The starting point of the modern theory is the celebrated Nyquist-Shannon sampling theorem, which allows one to perfectly reconstruct a bandlimited signal from its uniform point samples. However, this pioneering result is inherently limited in practice, as it requires an infinite number of samples on a rigid uniform grid. To overcome these practical limitations and accommodate more general sampling geometries, a variety of approaches have been developed, ranging from classical deterministic methods to modern stochastic strategies.

Deterministic approaches generally seek to recover an unknown signal by computing a least-squares approximation within a chosen finite-dimensional subspace. The measurements can be pointwise evaluations, as in Shannon's theorem, or modeled via abstract continuous linear functionals. In \cite{adcock2012generalized}, Adcock and Hansen introduced Generalized Sampling (GS), focusing on reconstructing an element $f$ in an infinite-dimensional separable Hilbert space $\mathcal H$ from its first $m$ measurements $(\<f,s_k\>)_{k=1}^m$, where $\{s_k\}_{k\in\N}$ is a given Riesz basis (or a frame, as extended in \cite{adcock2012frames}). The recovery is achieved by projecting onto the span of the first $n$ elements of a reconstruction system $\{w_k\}_{k\in\N}$, which the user can tailor to the specific application. The central goal is to establish a deterministic \textit{sampling rate}—an estimate of how large the sampling budget $m$ must be relative to the reconstruction dimension $n$ to guarantee stable recovery. 

Within the deterministic GS framework \cite{adcock2012generalized,adcock2012stable,adcock2012frames,adcock2013beyond}, the stable sampling rate is not universal; it is heavily basis-dependent. For highly compatible pairs, such as Fourier sampling and wavelet reconstruction, one can achieve an optimal linear scaling $m\gtrsim n$. However, other physically motivated choices—such as Fourier measurements and Legendre polynomial reconstruction—suffer from a severe quadratic bottleneck $m\gtrsim n^2$ \cite{adcock2015generalized}.

To break this deterministic barrier, one can transition to a stochastic scenario. The geometric setup remains identical, but rather than deterministically selecting the first $m$ samples, one defines a probability distribution $p=(p_j)_{j\in\N}$ over the natural numbers and draws $m$ independent sample indices. Instead of seeking a worst-case deterministic rate, we fix a confidence parameter $\delta\in(0,1)$ and establish a \textit{stochastic sampling rate} that guarantees stable recovery with a probability exceeding $1-\delta$. 

Our analysis falls within the broader context of stochastic weighted least-squares \cite{cohen2017optimal}. The sampling process is modeled via continuous linear functionals—inner products against the elements of a Riesz basis or a frame—rather than pointwise evaluations. The acquired data are the random measurements $\<f,s_{i_1}\>,\dots,\<f,s_{i_m}\>$, which are subsequently utilized in a weighted least-squares problem. The resulting optimal coefficients synthesize the recovered signal $\tilde f$ within the finite-dimensional subspace $\mathcal W_n:=\Span\{w_1,\dots,w_n\}$.

To quantify the geometric compatibility between the sampling and reconstruction systems, we introduce the coherence parameter $R:=\sup_{j\in\N}\frac{1}{p_j}\sum_{k=1}^n|\<w_k,s_j\>|^2$. One of the main results of this work can be summarized as follows:

\begin{mainresult}
    Let $f \in \mathcal{H}$ be the unknown element we wish to reconstruct. Suppose $\{s_k\}_{k\in\N}$ is a frame, and $\{w_k\}_{k\in\N}$ is a Riesz basis such that the nested subspace condition $\overline{\Span\{w_k\st k\in\N\}}\subseteq\overline{\Span\{s_k\st k\in\N\}}$ holds. If the number of random measurements $m$ satisfies
    \[
    m\gtrsim R\log(2n/\delta),
    \]
    then with probability at least $1-\delta$, the weighted least-squares reconstruction $\tilde f$ satisfies the error bound:
    \[
    \|f-\tilde f\|\le\mu_{n,\Omega}\|P_{\mathcal W_n^\perp}f\|,
    \]
    where $\mu_{n,\Omega} \ge 1$ is an amplification factor depending only on the reconstruction dimension $n$ and the random sampling indices $\Omega=\{i_1,\dots,i_m\}$.
\end{mainresult}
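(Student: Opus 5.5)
We sketch the argument in three steps: a deterministic reduction, a concentration bound, and an assembly step.

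\textbf{Setup.} Write the weighted least-squares problem in operator form. Let $\Omega=\{i_1,\dots,i_m\}$ be drawn i.i.d.\ from $p$. Define the sampling operator $S_\Omega:\mathcal H\to\C^m$ by
\[
(S_\Omega g)_l=(mp_{i_l})^{-1/2}\<g,s_{i_l}\>,
\]
so that $\E\, S_\Omega^*S_\Omega=S^*S$ is the frame operator of $\{s_k\}$. The reconstruction is
\[
\tilde f=\arg\min_{g\in\mathcal W_n}\|S_\Omega(f-g)\|_2 .
\]
Define the random Gram operator on $\mathcal W_n$,
\[
\randgram:=P_{\mathcal W_n}S_\Omega^*S_\Omega|_{\mathcal W_n}
=\frac1m\sum_{l=1}^m\frac{1}{p_{i_l}}\,P_{\mathcal W_n}s_{i_l}\otimes P_{\mathcal W_n}s_{i_l}.
\]
Its expectation is $\Sigma:=P_{\mathcal W_n}S^*S|_{\mathcal W_n}$.

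\textbf{Step 1: deterministic reduction.} If $\randgram$ is invertible, then $\tilde f$ is well defined and $f\mapsto\tilde f$ is an oblique projection $Q_\Omega$ onto $\mathcal W_n$ that fixes $\mathcal W_n$. Hence
\[
f-\tilde f=(I-Q_\Omega)P_{\mathcal W_n^\perp}f .
\]
Define
\[
\mu_{n,\Omega}:=\|I-Q_\Omega\|=\|Q_\Omega\|\ge1 ,
\]
where the equality holds for nontrivial idempotents. This gives $\|f-\tilde f\|\le\mu_{n,\Omega}\|P_{\mathcal W_n^\perp}f\|$. One can further express $\mu_{n,\Omega}$ via a cosine of the angle between $\mathcal W_n$ and the range of the weighted sampling adjoint. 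This bounds it by a quantity involving $\|\randgram^{-1}\|$ and the cross term $P_{\mathcal W_n}S_\Omega^*S_\Omega P_{\mathcal W_n^\perp}$. The claim therefore reduces to showing invertibility of $\randgram$ with high probability.

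\textbf{Step 2: concentration.} Under the subspace condition $\overline{\Span\{w_k\}}\subseteq\overline{\Span\{s_k\}}$, the lower frame bound $A$ gives $\Sigma\ge A\,I$ on $\mathcal W_n$. The summands $X_l=(mp_{i_l})^{-1}P_{\mathcal W_n}s_{i_l}\otimes P_{\mathcal W_n}s_{i_l}$ are independent, positive semidefinite, and have rank one. Their norms are bounded by
\[
\|X_l\|\le\frac{\|P_{\mathcal W_n}s_{i_l}\|^2}{m\,p_{i_l}}\lesssim \frac{R}{m}.
\]
The comparison between $\|P_{\mathcal W_n}s_j\|^2$ and $\sum_{k\le n}|\<w_k,s_j\>|^2$ uses the Riesz bounds of $\{w_k\}$. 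The matrix Chernoff inequality then yields $\lambda_{\min}(\randgram)\ge A/2$ with probability at least $1-\delta$, provided $m\gtrsim R\log(2n/\delta)$. The constants absorbed into $\gtrsim$ are the frame and Riesz bounds. On this event $\randgram$ is invertible, Step 1 applies, and $\mu_{n,\Omega}$ is finite.

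\textbf{Main obstacle.} The delicate point is the passage from $\sum_k|\<w_k,s_j\>|^2$ to the Gram structure when $\{w_k\}$ is only a Riesz basis, not orthonormal. This requires working with the biorthogonal or dual basis, so that the leverage scores match the eigenstructure of $\Sigma$. A second difficulty is the fact that $\Sigma$ is computed with a frame, so its lower bound depends on the subspace condition.

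If one additionally wants a quantitative bound on $\mu_{n,\Omega}$ rather than mere finiteness, the cross term is rectangular and infinite-dimensional. That case would require the rectangular Bernstein inequality announced in the abstract. For the statement as given, where $\mu_{n,\Omega}$ is simply defined as the amplification factor, the Chernoff step is sufficient.
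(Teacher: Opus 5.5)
Your proof is correct. Its architecture matches the paper's: invertibility of the random Gram operator with high probability, then a deterministic reduction to an $f$-independent amplification factor. Both ingredients are implemented differently, though.

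\emph{Probabilistic step.} The paper applies matrix Bernstein to $\randgram-\Sigma$ on $\C^n$ and requires the two-sided event $\|\randgram-\Sigma\|<\lambda_0$. This costs $m\ge\frac83 R\|\Sigma\|\|\Sigma^{-1}\|^2\log(2n/\delta)$, i.e.\ a constant $BD/(A^2C^2)$ after Proposition~\ref{unifbound.cond_number}. Your one-sided matrix Chernoff bound on $\mathcal W_n$ uses only two facts:
\begin{itemize}
\item $\lambda_{\min}\ge A$ for your compressed operator $\Sigma$ on $\mathcal W_n$, which follows from $\mathcal W_n\subseteq\mathcal S$ and the lower frame bound;
\item $\|X_l\|\le R/(Cm)$, which follows from $v_j=(W\iota_n)^*P_{\mathcal W_n}s_j$ and hence $C\|P_{\mathcal W_n}s_j\|^2\le\|v_j\|^2$.
\end{itemize}
It yields $m\gtrsim\frac{R}{AC}\log(n/\delta)$, with no dependence on $B,D$ and only one power of the lower bounds. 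What Bernstein buys the paper is genuine operator-norm concentration of $\randgram$ around $\Sigma$. It reuses this for $\randgram^{-1}\to\Sigma^{-1}$ in Theorem~\ref{K_n,omega} and for range stability in the frame case (Proposition~\ref{rank.stability}).

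\emph{Deterministic step.} Write $\Pi_\Omega$ for your oblique projection, since your $Q_\Omega$ clashes with the paper's weighted sampling operator. Your factor $\|I-\Pi_\Omega\|$ is in fact identical to the paper's. The paper's identity $f-\tilde f=P_{\mathcal W_n^\perp}f-W\iota_n\randgram^{-1}\widehat C_\Omega f$ has its two terms in orthogonal subspaces, and $\widehat C_\Omega=\widehat C_\Omega P_{\mathcal W_n^\perp}$. Together these give $\|I-\Pi_\Omega\|=(1+K_{n,\Omega}^2)^{1/2}$. The paper's explicit decomposition is longer but isolates the cross-term $\widehat C_\Omega$, which is what later makes the factor controllable via the rectangular Bernstein inequality. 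Your idempotent argument is shorter but leaves $\mu_{n,\Omega}$ as an opaque norm.

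\emph{Two minor points.}
\begin{itemize}
\item The ``main obstacle'' you describe is not one. The comparison $C\|P_{\mathcal W_n}s_j\|^2\le\|v_j\|^2\le D\|P_{\mathcal W_n}s_j\|^2$ that you already use in Step 2 suffices. Biorthogonal systems would only matter for identifying the Christoffel function exactly, which is the paper's $\langle\Sigma^{-1}v_j,v_j\rangle$.
\item $\E\, S_\Omega^*S_\Omega=\sum_{j\in\supp(p)}s_j\otimes s_j$. This is the frame operator only if $p$ has full support, and in the paper's synthesis convention it is $SS^*$, not $S^*S$. After compressing to $\mathcal W_n$ the missing terms vanish, because $v_j=0$ forces $P_{\mathcal W_n}s_j=0$ (Assumption~\ref{assumptions}(iii)). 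So your lower bound $A$ on $\mathcal W_n$ survives, but you should say so.
\end{itemize}
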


This is a consequence of a more general result, Theorem \ref{mainresult}, which uses weaker geometric assumptions but yields a slightly less explicit sampling rate.

Because the reconstruction $\tilde f$ is constrained to $\mathcal W_n$, the orthogonal projection $P_{\mathcal W_n}f$ represents the best possible theoretical approximation. This theorem thus guarantees that our randomized method is robust: the reconstruction error is bounded by a small multiple of this optimal, out-of-space tail error $\|P_{\mathcal W_n^\perp}f\| = \|f - P_{\mathcal W_n}f\|$.

A major advantage of this stochastic framework is its flexibility regarding the probability distribution $p$. This allows us to actively select the distribution that minimizes the sample complexity, commonly referred to as the \emph{leverage-score distribution} \cite{leverage}. When orthonormal bases are employed, leverage scores are intimately tied to the concept of Christoffel sampling \cite{NEVAI19863,XU1995205}. We demonstrate that, even in the broadly generalized scenario where both systems are arbitrary frames, our leverage-score distribution retains a deep, explicit connection to the classical Christoffel function. 

Crucially, by implementing leverage-score sampling, we achieve the stochastic sampling rate:
\[
m\gtrsim n\log(2n/\delta).
\]
In stark contrast to deterministic approaches, this near-linear rate is \textit{universal} within our structural setting. Provided the subspace condition holds, this $n \log n$ complexity is completely independent of the specific choice of the frame $\{s_k\}_{k\in\N}$ and Riesz basis $\{w_k\}_{k\in\N}$. Furthermore, when orthonormal bases are employed, the framework yields tighter constants and precise control over the approximation error. Specifically, we prove that the factor $\mu_{n,\Omega}$ converges in probability to $1$ as $m\to+\infty$, achieving the theoretical lower bound. This effectively establishes the quasi-optimality of our method under the framework developed by Adcock, Hansen, and Poon \cite{adcock2013beyond,adcock2014optimal}. 

We demonstrate the practical power of this method on the classical problem of recovering an analytic function $f$ from $m$ continuous Fourier measurements \cite{adcock2015generalized,hrycak2010pseudospectral,demanet2019stable}. We obtain this by explicitly computing the leverage-score distribution associated with the Fourier-Legendre coherence. This computation relies on a known closed-form expression for the Fourier coefficients of Legendre polynomials via spherical Bessel functions. Consequently, our stochastic method natively overcomes the deterministic $n^2$ bottleneck, yielding near-exponential convergence rates:
\[
\|f-\tilde f\|\lesssim \sqrt m\,\rho^{-\frac{m}{\log(m/\delta)}}.
\]

Let us summarize our main contributions:
\begin{itemize}
    \item We establish a rigorous randomized framework for generalized sampling that accommodates arbitrary frames for both the sampling and reconstruction systems. Within this broad setting, we prove that whenever the reconstruction system is a Riesz basis and its span embeds into the sampling space, the near-linear rate $m\gtrsim n\log n$ guarantees stable recovery with high probability.
    \item We provide explicit concentration bounds for both the empirical Gram matrix and the empirical cross-term operator. To effectively control the aliasing error governed by the cross-term, we prove a novel variant of the matrix Bernstein inequality—extending the classical result to random rectangular operators acting between distinct, infinite-dimensional Hilbert spaces—which constitutes a theoretical contribution of independent interest.
    \item We rigorously connect our leverage-score distribution to classical Christoffel sampling in the orthonormal setting, and present a comprehensive comparative analysis between the two strategies in the more general frame setting.
    \item We showcase the efficacy of our method through the classical Fourier-Legendre analytic continuation problem: our stochastic framework allows us to achieve near-exponential convergence rates.
\end{itemize}

\subsection{Related Work}\label{subsec:related_work}

The theoretical framework developed in this work intersects with numerous active areas of research.

\textit{Generalized Sampling.}
The formalization of GS in arbitrary infinite-dimensional, separable Hilbert spaces was driven by the seminal works of Adcock and Hansen \cite{adcock2012generalized}, where instead of considering pointwise samples they resorted to abstract continuous linear functionals over Riesz bases. The main contribution of Adcock and Hansen lies not only in the nature of the measurements, but in the decoupling of the sampling and reconstruction dimensions: more specifically, they fixed the reconstruction dimension $n$ and treated the sampling dimension $m\ge n$ as a variable parameter. This strategy established the foundation for investigating \textit{stable sampling rates}: the goal is determining the minimal oversampling required to ensure numerical stability in this redundant framework. This theory was subsequently generalized to frames \cite{adcock2012frames,adcock2020frames} as well as to nonlinear, ill-posed inverse problems in \cite{alberti2022infinite}. However, as demonstrated in these foundational studies, ensuring universally stable reconstructions typically demands a stringent quadratic scaling $m\gtrsim n^2$, which often becomes computationally unfeasible in high-resolution applications. In parallel, it was shown in \cite{platte2011impossibility} that employing deterministic, equispaced grids of pointwise measurements to ensure exponential convergence in the approximation of analytic functions leads to severe numerical instability. A continuous analogue of this impossibility theorem was later established in \cite{adcock2014stability}.

\textit{Stochastic Least Squares.}
In order to overcome the aforementioned deterministic barriers, the contemporary approximation theory has been heading towards randomized strategies. Our work is strongly influenced by the seminal paper of Cohen and Migliorati \cite{cohen2017optimal}, who derived an optimal sampling strategy via weighted least-squares for the recovery of bounded functions from pointwise measurements, ultimately achieving the same log-linear sampling rate that we establish here. Other works that implement weighted least-squares methods to derive a linear rate up to log factors include \cite{kammerer2021worst}. These approaches fundamentally rely on point evaluations drawn according to the so-called Christoffel sampling strategy from a domain $D\subseteq\R^d$ endowed with a probability measure and operate with orthonormal bases in Reproducing Kernel Hilbert Spaces, a choice that makes the (reciprocal) Christoffel function particularly easy to compute. In contrast, our paper extends this paradigm to the recovery of arbitrary elements in general separable Hilbert spaces from abstract continuous linear measurements. A recent review \cite{adcock2025optimal} highlights this broader framework as a compelling direction for future research. Here, we move beyond conceptual discussion to rigorously establish the validity of the sought-after log-linear sampling rate. Specifically, we derive concentration estimates for both the random Gram matrix and the empirical cross-term operator, extending the results of aforementioned works from standard orthonormal settings to arbitrary frames.

\textit{Other Stochastic Approaches.}
The stochastic approach considered in the present work has been explored in different contexts and variants.
For instance, in order to achieve the much-coveted linear sampling rate $m\gtrsim n$, numerous endeavors have been made to develop a randomized sampling strategy that improves the sampling rate yielded by Christoffel sampling. An example of such methods are determinantal point processes \cite{determinantal}: in the context of recovering functions in a reproducing kernel Hilbert space from pointwise measurements, determinantal sampling extends the Christoﬀel strategy by introducing a kernel-dependent correlation between the sampled nodes, effectively forcing those nodes to repel each other with a repulsion related to the smoothness of the target function. Other works that aim at removing the logarithmic factor replace i.i.d. sampling with greedy procedures \cite{dolbeault2024randomized} that continuously update the Gram matrix at every sampling step, ultimately establishing a linear sampling rate at the cost of higher (polynomial) computational complexity.

Another prominent field that makes use of stochastic techniques is compressed sensing (CS)\cite{candes2006robust,donoho2006compressed}. Although it originally emerged in a finite-dimensional setting, over the past two decades an increasing number of works have been seeking a method to recover infinite-dimensional signals which are assumed to be sparse or compressible with respect to some suitable basis, which is a reasonable hypothesis in many applications. The series of works \cite{adcock2014generalized,adcock2016generalized,adcock2017breaking,adcock2018infinite,adcock2024efficient} extended CS to infinite dimension, by combining GS with existing CS tools, namely randomization and weighted $\ell^1$-minimization. This made it possible to achieve stochastic sampling rates of the form
\[
m\gtrsim \mu\cdot n\cdot s\times\text{log factors},
\]
where $s$ is the sparsity of the unknown signal $f$, the factor $\mu$ measures the coherence between the sampling and the reconstruction bases and $n > s$ is the reconstruction dimension. These ideas were recently further expanded and applied to ill-posed inverse problems in \cite{alberti2021infinite,alberti2025compressed,alberti2025compressed2}.

\subsection{Structure of the Paper}
In Section~\ref{sec:notation}, we fix our notation and present to the reader more background regarding GS. In particular, we explore the core ideas behind the deterministic scenario in order to properly generalize them to the randomized framework. Section~\ref{sec:mainresult} is devoted to presenting the main result of the paper. We establish rigorous concentration inequalities for both the empirical Gram matrix and the empirical cross-term operator. We derive the stochastic sampling rate $m\gtrsim n\log(2n/\delta)$ by drawing the samples according to a leverage-score distribution and compare this strategy with Christoffel sampling. Finally, we prove how to extend the aforementioned results to the case where the reconstruction system is an arbitrary frame. In Section~\ref{sec:analytic.func}, we present one application of our method: the reconstruction of an analytic function from Fourier measurements. Section~\ref{sec:proofs} gathers the proofs of the statements of the previous sections (with many technical lemmas postponed to Appendix~\ref{sec:appendix}). Finally, Section~\ref{sec:conclusions} presents concluding remarks and some open questions on the topic.

\section{Setup}

\label{sec:notation}
We begin by setting up our notation. 
Let $\ell^2=\ell^2(\N)$ be the Hilbert space of square summable sequences. In our convention $0\notin\N$. Let $\{e_n\}_{n\in\N}$ be the canonical basis of $\ell^2$ and let $\{\mathbf{e_1},\dots,\mathbf{e_n}\}$ denote the canonical basis of $\C^n$. In what follows, $P_n$ will denote the orthogonal projection of $\ell^2$ onto $\Span\{e_1,\dots,e_n\}$ and $P_n^\perp:=I_{\ell^2}-P_n$. We also define $\iota_n\colon\C^n\longrightarrow\ell^2$ by $\iota_n(x_1,\dots,x_n)=(x_1,\dots,x_n,0,0,\dots)$. It's easy to see that $\iota_n^*\colon\ell^2\longrightarrow\C^n$ cuts an infinite sequence $(x_n)_{n\in\N}\in\ell^2$ into its first $n$ components $(x_1,\dots,x_n)\in\C^n$. We have the identities $\iota_n^*\iota_n=I_{\C^n}$ and $\iota_n\iota_n^*=P_n$. For an operator $A$, we will denote its range by $\ran(A)$.

Fix a separable Hilbert space $\mathcal H$. Recall the definitions of Riesz sequence, Riesz basis and frame.
\begin{definition}
    A sequence $(v_k)_{k\in\N}$ of $\mathcal H$ is called a Riesz sequence if there exists two positive constants $A,B>0$ such that for every $x\in\ell^2$ one has
    \[
    A\|x\|^2\le\left\|\sum_{k\in\N}x_kv_k\right\|^2\le B\|x\|^2.
    \]
    Equivalently, the synthesis operator associated to $(v_k)_{k\in\N}$:
    \[
    x\in\ell^2\longmapsto\sum_{k\in\N}x_kv_k\in\mathcal H
    \]
    is bounded, injective and has closed range. The Riesz sequence $(v_k)_{k\in\N}$ is a Riesz basis for $\mathcal H$ if
    \[
    \overline{\operatorname{span}\{v_k\st k\in\N\}}=\mathcal H,
    \]
    that is, if the synthesis operator associated to $(v_n)_{n\in\N}$ is bounded and bijective.
\end{definition}
\begin{definition}
    A sequence $(v_k)_{k\in\N}$ of $\mathcal H$ is called a frame if there exist two positive constants $\alpha,\beta>0$ such that for every $h\in\mathcal H$ one has
    \[
    \alpha\|h\|^2\le\sum_{k\in\N}|\<h,v_k\>|^2\le\beta\|h\|^2.
    \]
    This is equivalent to requiring the associated analysis operator
    \[
    h\in\mathcal H\longmapsto (\<h,v_k\>)_{k\in\N}\in\ell^2
    \]
    to be bounded and injective with closed range, which in turn means that the associated synthesis operator is bounded and surjective.
\end{definition}
The last part of the previous definitions make it clear that a Riesz basis is in particular a frame. 
Throughout this paper, we often write "Riesz basis" to denote a Riesz sequence in the ambient space $\mathcal H$, with the implicit understanding that it constitutes a Riesz basis for its closed linear span. The analogous convention holds for the word "frame".

\subsection{Background on Generalized Sampling}
Let $f\in\mathcal H$ be the unknown element we would like to recover. Suppose we are given a frame $\{s_k\}_{k\in\N}$ (sampling vectors) and a Riesz sequence $\{w_k\}_{k\in\N}$ (reconstruction vectors), and let $\mathcal S:=\overline{\Span\{s_k\st k\in\N\}}$ and $\mathcal W:=\overline{\Span\{w_k\st k\in\N\}}$. If $n\in\N$, we set $\mathcal S_n:=\Span\{s_1,\dots,s_n\}$ and $\mathcal W_n:=\Span\{w_1,\dots,w_n\}$. Each sequence has its associated synthesis operator:
\begin{equation*}
    \begin{aligned}
        S\colon\ell^2 &\longrightarrow \mathcal H, &\qquad
        W\colon\ell^2 &\longrightarrow \mathcal H,\\
        x &\longmapsto \sum_{k\in\N} x_k s_k, &\qquad
        x &\longmapsto \sum_{k\in\N} x_k w_k.
    \end{aligned}
\end{equation*}
The analysis operators associated to $\{s_k\}_{k\in\N}$ and $\{w_k\}_{k\in\N}$ are $S^*$ and $W^*$ respectively, which are defined by
\begin{equation*}
    \begin{aligned}
        S^*\colon\mathcal H&\longrightarrow\ell^2, &\qquad
        W^*\colon\mathcal H&\longrightarrow\ell^2,\\
        h &\longmapsto (\<h,s_k\>)_{k\in\N}, &\qquad
        h &\longmapsto (\<h,w_k\>)_{k\in\N}.
    \end{aligned}
\end{equation*}
By definition, there exist positive constants $A,B,C,D>0$ such that for every $x\in\ell^2$ and every $s\in\mathcal S$ we have
\begin{equation}
    A\|s\|^2\le\|S^*s\|^2\le B\|s\|^2,\qquad C\|x\|^2\le\|Wx\|^2\le D\|x\|^2.
\end{equation}
It's known that any Riesz basis for a Hilbert space is a frame for that space and the frame bounds coincide with the Riesz basis bounds (cf. \cite[Theorem 5.4.1]{christensen2003introduction}). It follows that for every $w\in\mathcal W$
\[
C\|w\|^2\le\|W^*w\|^2\le D\|w\|^2.
\]
In particular, both $S$ and $W$ are bounded with $\|S\|\le\sqrt B$, $\|W\|\le\sqrt D$. Moreover, $W$ is a bijection between $\ell^2$ and $\mathcal W$. On the other hand, $S(\ell^2)=\mathcal S$ but $S$ may not be one-to-one.

Our goal is to obtain a reconstruction $\tilde f\in\mathcal W$ of $f$ based on the sampling vectors $S^*f=\{\langle f,s_k\rangle\}_{k\in\N}$. If we assume that $\mathcal H=\mathcal W\oplus\mathcal S^\perp$ and that also $\{s_k\}_{k\in\N}$ is a Riesz basis for $\mathcal S$, a natural choice would be
\[
\tilde f:=W\tilde\beta,
\]
where $\tilde\beta\in\operatorname{argmin}\frac{1}{2}\|S^*W\beta-S^*f\|^2$. This can be solved explicitly: 
\[
\tilde f=W(S^*W)^{-1}S^*f.
\]
Indeed, under the above assumptions, the operator $U:=S^*W$ is invertible and $WU^{-1}S^*$ is precisely the projection onto $\mathcal W$ along $\mathcal S^\perp$.
\smallskip

However, in practice, one does not have the infinite amount of samples $S^*f\in\ell^2$ at disposal. In \cite{adcock2012generalized}, Adcock and Hansen fixed $n\in\N$ (the reconstruction dimension) and proved (under the assumption $\mathcal W\cap\mathcal S^\perp=\{0\}$) that for sufficiently large $m\in\N$ (the sampling dimension) the $n\times n$ matrix $\iota_n^*U^*P_mU\iota_n$ is invertible, allowing them to deduce that the problem
\[
P_mU\iota_n x=P_mS^*f
\]
admits a unique least-squares solution given by $\tilde x=(\iota_n^*U^*P_mU\iota_n)^{-1}\iota_n^*U^*P_mS^*f$. The reconstruction of $f\in\mathcal H$ is then performed in the following way:
\[
\tilde f:=W\iota_n\tilde x=W\iota_n(\iota_n^*U^*P_mU\iota_n)^{-1}\iota_n^*U^*P_mS^*f\in\mathcal W_n,
\]
with an estimate on the error of the form
\[
\|f-\tilde f\|\lesssim(1+\|\iota_n(\iota_n^*U^*P_mU\iota_n)^{-1}\iota_n^*U^*P_mUP_n^\perp\|)\|P_n^\perp\beta\|.
\]

\subsection{Stochastic Generalized Sampling}

Rather than taking the first $m$ deterministic samples from $\mathbf y:=S^*f=(\<f,s_k\>)_{k\in\N}$, we propose selecting $m$ indices $\Omega=\{i_1,\dots,i_m\}\subseteq\N$ at random according to a suitable probability distribution $p=(p_j)_{j\in\N}$. This randomized sampling strategy ensures the stable recovery of $f$ with high probability using a much smaller sampling budget $m$.

To formalize this, let $i_1,\dots,i_m$ be drawn independently from $p$. We define the weighted random projection $Q_{\Omega}$ onto $\Span\{e_{i_1},\dots,e_{i_m}\}$ as
\begin{equation*}
    Q_{\Omega}:=\frac{1}{m}\sum_{t=1}^m\frac{e_{i_t}\otimes e_{i_t}}{p_{i_t}},
\end{equation*}
which is well-defined almost surely since indices are drawn according to $p$ (and thus $p_{i_t} > 0$ with probability one). 

Our goal is to obtain a reconstruction $\tilde f:=W\iota_n\tilde x$, where the coefficients $\tilde x\in\C^n$ minimize the discrepancy between the acquired samples and the reconstructed samples. To prevent highly probable indices from unfairly dominating the target element, we solve the weighted least-squares problem:
\[
\underset{x\in\mathbb{C}^n}{\operatorname{argmin}}\sum_{t=1}^m\frac{1}{p_{i_t}}|\<W\iota_nx,s_{i_t}\>-\<f,s_{i_t}\>|^2.
\]
Observe that this objective function can be rewritten in operator form:
\begin{align*}
    \frac{1}{m}\sum_{t=1}^m\frac{1}{p_{i_t}}|\<W\iota_nx,s_{i_t}\>-\<f,s_{i_t}\>|^2&=\frac{1}{m}\sum_{t=1}^m\frac{1}{p_{i_t}}|\<U\iota_nx-\mathbf y,e_{i_t}\>|^2\\
    &=\frac{1}{m}\sum_{t=1}^m\frac{1}{p_{i_t}}\<(e_{i_t}\otimes e_{i_t})(U\iota_nx-\mathbf y),U\iota_nx-\mathbf y\>\\
    &=\left\|\sqrt{Q_\Omega}(U\iota_n x-\mathbf y)\right\|^2.
\end{align*}
We are thus looking for a least-squares solution $\tilde x$ to the linear system
\begin{align}\label{problem}
    \sqrt{Q_\Omega}U\iota_n x=\sqrt{Q_\Omega}\mathbf y.
\end{align}
Such a solution exists and is unique if the random Gram matrix 
\[
\randgram:=(\sqrt{Q_\Omega}U\iota_n)^*\sqrt{Q_\Omega}U\iota_n=\iota_n^*U^*Q_\Omega U\iota_n
\]
is invertible. By construction, $\randgram$ is an unbiased estimator of the $n\times n$ finite section $\Sigma:=\iota_n^*U^*U\iota_n$. Inspired by the deterministic scenario, we will prove that $\randgram$ is eventually invertible with high probability by showing that it converges in the operator norm to $\Sigma$. (Under the assumption $\mathcal W\cap\mathcal S^\perp=\{0\}$, the limiting matrix $\Sigma$ is strictly positive definite and invertible, as shown in Proposition \ref{sigma.invertible}).

To rigorously analyze this convergence, we introduce the following key quantities. Let $\lambda_0>0$ denote the smallest eigenvalue of $\Sigma$, so that $\|\Sigma^{-1}\|=\frac{1}{\lambda_0}$. For each frequency index $j\in\N$, we isolate the specific finite-dimensional interaction vector $v_j:=\iota_n^*U^*e_j\in\C^n$, and we define the coherence bound:
\[
R:=\sup_{j\in\supp(p)}\frac{\|v_j\|^2}{p_j}.
\]

In addition to the Gram matrix, our reconstruction error analysis requires us to understand how the out-of-space tail of the signal (i.e., $P_{\mathcal W_n^\perp}f$) aliases into our finite-dimensional approximation. This aliasing is governed by the empirical cross-term operator:
\[
\widehat C_\Omega:=\iota_n^*U^*Q_{\Omega}S^*P_{\mathcal W_n^\perp}\colon\mathcal H\longrightarrow\C^n.
\]
Analogous to the Gram matrix, $\widehat C_\Omega$ acts as an unbiased estimator for the true cross-term operator $C:=\iota_n^*U^*S^*P_{\mathcal W_n^\perp}$. To control the concentration of this operator, we define the residual vectors $u_j:=P_{\mathcal W_n^\perp}s_j\in\mathcal W_n^\perp$ along with their respective bound $R':=\sup_{j\in\supp(p)}\frac{\|u_j\|^2}{p_j}$.

To unify our subsequent concentration inequalities, we define the joint coherence bound $R'':=\max\{R,R'\}$. Assuming that $R'<+\infty$ (see Assumption \ref{assumptions}), the series $\sum_{j\in\supp(p)}u_j\otimes u_j$ converges absolutely in the operator norm to a bounded operator $T\in\mathcal B(\mathcal H)$. Finally, we let $K:=\max\{\|\Sigma\|,\|T\|\}$ denote the maximum scale of our limiting operators.

\section{Main result}\label{sec:mainresult}
Throughout this work, we operate under the following assumptions.

\begin{assumption}\label{assumptions}
    \begin{enumerate}[i)]
        \item The reconstruction space trivially intersects the orthogonal complement of the sampling space:
        \[
        \mathcal W\cap\mathcal S^\perp=\{0\}.
        \]
        (Note that this condition is naturally satisfied in many practical settings, such as when $\mathcal W \subseteq \mathcal S$).
        \item The probability distribution $p$ yields finite coherence bounds $R$ and $R'$ (and consequently, $R'' < +\infty$).
        \item The support of the probability distribution $p$ contains the support of the sequence $(v_j)_{j\in\N}$. Equivalently, $p_j > 0$ whenever $v_j \neq 0$:
        \begin{equation}\label{supp(p)}
            \supp(p):=\{j\in\N\st p_j > 0\}\supseteq\{j\in\N\st v_j\neq0\}.
        \end{equation}
    \end{enumerate}
\end{assumption}

\subsection{Main Result}\label{subsec:riesz_and_onb}

We first present our main result for the case where $\{s_k\}_{k\in\N}$ is a frame and $\{w_k\}_{k\in\N}$ is a Riesz basis. We then conclude this subsection by detailing its specialization to the orthonormal case.
We recall the definitions of the weighted sampling operator $Q_\Omega := \frac{1}{m}\sum_{t=1}^m \frac{e_{i_t}\otimes e_{i_t}}{p_{i_t}}$, the empirical Gram matrix $\randgram := \iota_n^*W^*SQ_\Omega S^*W\iota_n$ and the empirical cross-term $\widehat C_\Omega := \iota_n^*W^*SQ_\Omega S^*P_{\mathcal W_n^\perp}$.

\begin{theorem}\label{mainresult}
    Let $\mathcal H$ be a separable Hilbert space. Let $\{s_k\}_{k\in\N}$ be a frame for its closed span $\mathcal S$, and let $\{w_k\}_{k\in\N}$ be a Riesz basis for its closed span $\mathcal W$, with respective synthesis operators $S$ and $W$. 
    
    Let $f\in\mathcal H$ and let $\mathcal W_n:=\Span\{w_1,\dots,w_n\}$. Suppose we draw $m$ i.i.d. sample indices $\Omega=\{i_1,\dots,i_m\}$ from $\N$ according to a probability distribution $p = (p_j)_{j\in\N}$. 
    
    Under Assumption \ref{assumptions}, if for a given $\delta\in(0,1)$ the sample size $m$ satisfies 
    \[
    m\ge\frac{8}{3} R\|\Sigma\|\|\Sigma^{-1}\|^2\log(2n/\delta),
    \]
    then with probability at least $1-\delta$, the weighted least-squares problem
    \[
    \underset{x\in\mathbb{C}^n}{\operatorname{argmin}}\sum_{t=1}^m\frac{1}{p_{i_t}}|\<W\iota_n x,s_{i_t}\>-\<f,s_{i_t}\>|^2
    \]
    admits a unique solution $\tilde x$. Moreover, the reconstructed signal $\tilde f:=W\iota_n\tilde x$ satisfies the error bound
    \[
    \|f-\tilde f\|\le\|P_{\mathcal W_n^\perp}f\|\left(1+K_{n,\Omega}^2\right)^\frac{1}{2},
    \]
    where $K_{n,\Omega}:=\|W\iota_n\randgram^{-1}\widehat C_\Omega\|$.
\end{theorem}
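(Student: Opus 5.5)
The proof has two parts. First, a matrix Chernoff/Bernstein argument shows that $\randgram$ is invertible with high probability. Second, a deterministic algebraic decomposition of the error, using the normal equations, gives the bound with $K_{n,\Omega}$.

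\emph{Step 1 (concentration).} Write $\randgram=\frac1m\sum_{t=1}^m X_t$ with $X_t:=\frac{v_{i_t}\otimes v_{i_t}}{p_{i_t}}$, where $v_j=\iota_n^*U^*e_j$. These are i.i.d.\ positive semidefinite $n\times n$ matrices. Since $\sum_j v_j\otimes v_j=\iota_n^*U^*U\iota_n$, we have $\E X_t=\Sigma$; here Assumption~\ref{assumptions}(iii) ensures that no mass of $\Sigma$ sits outside $\supp(p)$. The bound $\|X_t\|=\|v_{i_t}\|^2/p_{i_t}\le R$ holds almost surely. It is enough to show $\lambda_{\min}(\randgram)>0$. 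Rather than whitening, I would apply the matrix Bernstein inequality to $Y_t:=X_t-\Sigma$. We have $\|Y_t\|\le\max\{R,\|\Sigma\|\}\le R$, because $\|\Sigma\|\le\tr\Sigma=\sum_j\|v_j\|^2\le R$. For the variance, $\E Y_t^2\preceq\E X_t^2\preceq R\,\Sigma$, so $\|\sum_t\E Y_t^2\|\le mR\|\Sigma\|$.

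Bernstein gives
\[
\P\bigl(\|\randgram-\Sigma\|\ge\epsilon\bigr)\le 2n\exp\!\Bigl(-\frac{m\epsilon^2/2}{R\|\Sigma\|+R\epsilon/3}\Bigr).
\]
Choose $\epsilon=\lambda_0/2=\frac{1}{2}\|\Sigma^{-1}\|^{-1}$. Since $\epsilon\le\|\Sigma\|$, the denominator is at most $\frac43R\|\Sigma\|$, and the exponent is at least $\frac{3m}{32R\|\Sigma\|\|\Sigma^{-1}\|^2}$. Matching the stated constant $\frac83$ may require a slightly different choice of $\epsilon$ or a one-sided Bernstein bound. For instance, the one-sided (lower-tail) version gives a factor $n$ instead of $2n$, and with $\epsilon=\lambda_0$ minus a margin one gets $\frac{m\lambda_0^2/2}{R\|\Sigma\|+R\lambda_0/3}$. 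I expect this constant bookkeeping to be the main, though routine, obstacle.

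On the resulting event, of probability at least $1-\delta$, Weyl's inequality gives $\lambda_{\min}(\randgram)\ge\lambda_0-\epsilon>0$. So $\randgram$ is invertible, and the least-squares problem \eqref{problem} has the unique solution $\tilde x=\randgram^{-1}\iota_n^*U^*Q_\Omega\mathbf y$.

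\emph{Step 2 (error identity).} Decompose $f=P_{\mathcal W_n}f+P_{\mathcal W_n^\perp}f$ and write $P_{\mathcal W_n}f=W\iota_n x_0$, which is possible because $W$ is injective on $\mathcal W$. Then $\mathbf y=S^*f=U\iota_nx_0+S^*P_{\mathcal W_n^\perp}f$. Substituting into the formula for $\tilde x$ gives
\[
\tilde x=x_0+\randgram^{-1}\widehat C_\Omega f,
\]
so $\tilde f=P_{\mathcal W_n}f+W\iota_n\randgram^{-1}\widehat C_\Omega f$.

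Since $\widehat C_\Omega$ contains the factor $P_{\mathcal W_n^\perp}$, we have $\widehat C_\Omega f=\widehat C_\Omega P_{\mathcal W_n^\perp}f$. Hence
\[
f-\tilde f=P_{\mathcal W_n^\perp}f-W\iota_n\randgram^{-1}\widehat C_\Omega P_{\mathcal W_n^\perp}f.
\]
The first term lies in $\mathcal W_n^\perp$ and the second in $\mathcal W_n$, so they are orthogonal. Pythagoras then gives $\|f-\tilde f\|^2\le(1+K_{n,\Omega}^2)\|P_{\mathcal W_n^\perp}f\|^2$, which is the claimed bound.
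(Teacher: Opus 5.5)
Your proof follows the paper's route. You use the same matrix Bernstein bound for $\randgram-\Sigma$, with $L=R/m$ and variance proxy $R\|\Sigma\|/m$ (Proposition~\ref{prob.conv}). You then use the same identity $\tilde f=P_{\mathcal W_n}f+W\iota_n\randgram^{-1}\widehat C_\Omega f$ followed by Pythagoras. Your substitution $\mathbf y=U\iota_n x_0+S^*P_{\mathcal W_n^\perp}f$ is equivalent to the paper's use of $P_{\mathcal W_n}=W\iota_n(\iota_n^*W^*W\iota_n)^{-1}\iota_n^*W^*$. Your remark $\widehat C_\Omega f=\widehat C_\Omega P_{\mathcal W_n^\perp}f$ makes explicit a step the paper uses silently in its final inequality.

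The one real shortfall is the constant. With $\epsilon=\lambda_0/2$ your argument needs $m\ge\frac{32}{3}R\|\Sigma\|\|\Sigma^{-1}\|^2\log(2n/\delta)$, four times the stated threshold. The remedies you float are unnecessary: just take $\epsilon=\lambda_0$, as in Corollary~\ref{randgram.inv}.
\begin{itemize}
\item Bernstein bounds $\P(\|\randgram-\Sigma\|\ge\lambda_0)$, so the good event is the strict one $\|\randgram-\Sigma\|<\lambda_0$. On it, $\lambda_{\min}(\randgram)\ge\lambda_0-\|\randgram-\Sigma\|>0$, so no margin is needed.
\item Since $\lambda_0\le\|\Sigma\|$, the tail bound is at most $2n\exp\bigl(-\tfrac{3m\lambda_0^2}{8R\|\Sigma\|}\bigr)$. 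This is $\le\delta$ exactly when $m\ge\frac83R\|\Sigma\|\lambda_0^{-2}\log(2n/\delta)$, which is the stated threshold.
\item A margin $\epsilon<\lambda_0$ would in fact lose the constant $\frac83$ whenever $\lambda_0=\|\Sigma\|$, for example in the orthonormal case $\Sigma=I_{\C^n}$.
\end{itemize}

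Finally, you take $\lambda_0>0$ for granted. The paper proves it in Proposition~\ref{sigma.invertible}: Assumption~\ref{assumptions}(i) together with the injectivity of $W$ makes $U=S^*W$ injective. Hence $U\iota_n$ is injective, so $\ker\Sigma=\ker(U\iota_n)=\{0\}$.
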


 To the best of our knowledge, Theorem \ref{mainresult} constitutes the first result extending the generalized sampling framework to a stochastic setting with explicit bounds on the sample complexity. While a conceptual generalization of randomized least-squares to abstract measurements was recently discussed by Adcock \cite[Section 9]{adcock2025optimal}, the analysis therein does not provide explicit sampling bounds for the discrete frame interactions. Crucially, it does not address the aliasing error introduced by the infinite-dimensional basis mismatch. In our framework, this mismatch is governed by the empirical cross-term operator $\widehat C_\Omega$, whose concentration properties are rigorously established in Section \ref{subsec:crossterm} to guarantee stable recovery. 

Under slightly stronger assumptions, we can remove the dependence on $\|\Sigma\|$ and $\|\Sigma^{-1}\|$ in the sample complexity bounds.

\begin{corollary}\label{cor:Rlogn}
    Consider the setting of Theorem \ref{mainresult} and suppose further that $\mathcal W\subseteq\mathcal S$. Then $\|\Sigma\|\le BD$ and $\|\Sigma^{-1}\|\le\frac{1}{AC}$. In particular, the conclusions of Theorem \ref{mainresult} hold provided that
    \begin{equation}\label{Rlog n}
        m\gtrsim R\log(2n/\delta).
    \end{equation}
\end{corollary}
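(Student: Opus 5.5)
Since $\Sigma = \iota_n^*W^*SS^*W\iota_n$, its quadratic form is
\[
\<\Sigma x,x\> = \|S^*W\iota_n x\|^2 \qquad \text{for } x\in\C^n .
\]
The plan is to bound $\|\Sigma\|$ and $\|\Sigma^{-1}\|$ through this expression, using the frame bounds of $\{s_k\}_{k\in\N}$ and the Riesz bounds of $\{w_k\}_{k\in\N}$.

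\emph{Upper bound.} The analysis operator satisfies $\|S^*\|=\|S\|\le\sqrt B$ on all of $\mathcal H$, and $\|W\iota_n x\|^2\le D\|\iota_n x\|^2 = D\|x\|^2$. Hence $\<\Sigma x,x\>\le BD\|x\|^2$. Since $\Sigma$ is self-adjoint and positive semidefinite, this gives $\|\Sigma\|\le BD$.

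\emph{Lower bound.} This is where $\mathcal W\subseteq\mathcal S$ is used. For $x\in\C^n$, the vector $h:=W\iota_n x$ lies in $\mathcal W_n\subseteq\mathcal W\subseteq\mathcal S$. The lower frame inequality $A\|s\|^2\le\|S^*s\|^2$ holds for every $s\in\mathcal S$, so it applies to $h$ and gives $\|S^*h\|^2\ge A\|h\|^2$. The lower Riesz bound then gives $\|h\|^2\ge C\|x\|^2$. Together,
\[
\<\Sigma x,x\>\ge AC\|x\|^2 .
\]
So the smallest eigenvalue satisfies $\lambda_0\ge AC$, and therefore $\|\Sigma^{-1}\|\le \frac1{AC}$.

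The only point needing care is that the lower frame bound holds only on $\mathcal S$, not on all of $\mathcal H$. This is exactly why the weaker hypothesis $\mathcal W\cap\mathcal S^\perp=\{0\}$ of Theorem~\ref{mainresult} does not give a quantitative bound, and the inclusion $\mathcal W\subseteq\mathcal S$ is needed here.

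\emph{Conclusion.} Substitute both bounds into the sample-size condition of Theorem~\ref{mainresult}:
\[
\frac{8}{3}R\|\Sigma\|\|\Sigma^{-1}\|^2\log(2n/\delta)\le \frac{8}{3}\frac{BD}{A^2C^2}R\log(2n/\delta).
\]
Thus any $m\ge \frac{8BD}{3A^2C^2}R\log(2n/\delta)$ satisfies the hypothesis of Theorem~\ref{mainresult}, and its conclusions follow. The implied constant in \eqref{Rlog n} is $\frac{8BD}{3A^2C^2}$, which depends only on the frame and Riesz bounds and not on $n$, $\delta$, or $p$.
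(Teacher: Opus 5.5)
Your proof is correct and follows essentially the paper's route, which proves these bounds in Proposition~\ref{unifbound.cond_number}: $\|\Sigma\|\le\|S\|^2\|W\|^2\le BD$, and the lower bound comes from applying the frame inequality on $\mathcal S$ to $W\iota_n x\in\mathcal W\subseteq\mathcal S$ followed by the lower Riesz bound, then substituting into Theorem~\ref{mainresult}. The differences are minor. You pass from $\|U\iota_n x\|^2\ge AC\|x\|^2$ to $\lambda_0\ge AC$ through the quadratic form $\langle\Sigma x,x\rangle=\|U\iota_n x\|^2$, where the paper uses Lemma~\ref{below.bound.adjoint}. You also invoke the lower frame bound on $\mathcal S$ directly, which avoids the paper's intermediate claim that $\|S\alpha\|\ge\sqrt A\|\alpha\|$ for all $\alpha\in\ell^2$; that claim is false for redundant frames, since $S$ then has a nontrivial kernel.
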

 
Recall that, although suppressed in the notation, the parameter $R$ inherently depends on the reconstruction dimension $n$, due to the fact that each $v_j$ belongs to $\C^n$. We show in Section~\ref{subsec:leverage_score} that, starting from the new rate \eqref{Rlog n}, one can then choose an appropriate probability distribution $p$ such that $R$ grows (at most) linearly with $n$, ultimately resulting in the desired sampling rate $m\gtrsim n\log(2n/\delta)$. 

While one might hope for an even slower growth rate, linear scaling is in fact the theoretical limit. Indeed, as shown in Proposition \ref{trace}, the trace of $\Sigma$ can be bounded by $R$:
\[
\tr(\Sigma)=\sum_{j\in\N}\|v_j\|^2\le R\sum_{j\in\N}p_j=R.
\]
This inequality, combined with the fact that $\tr(\Sigma)\ge\lambda_0n$, proves that $R$ must grow at least linearly with $n$. This constraint aligns with contemporary literature (e.g., \cite{adcock2025optimal}) and is intrinsically tied to Christoffel sampling, a concept we will explore further in Section~\ref{subsec:leverage_score}.

\smallskip
\textit{Proof Strategy.} The proof of Theorem \ref{mainresult} proceeds in three main steps, detailed fully in Section \ref{sec:proofs}. First, we express the empirical Gram matrix error $\randgram-\Sigma$ as a sum of independent, centered, self-adjoint random matrices. By applying the matrix Bernstein inequality (Theorem \ref{bernstein}) and carefully bounding the variance and spectral norm of the summands (Proposition \ref{prob.conv}), we deduce that $\|\randgram-\Sigma\| < \lambda_0$ with high probability, provided $m$ is sufficiently large. Second, a standard perturbation argument ensures that $\randgram$ is invertible on this high-probability event, guaranteeing the existence and uniqueness of the least-squares solution. Finally, to estimate the error, we decompose the true signal as $f = P_{\mathcal W_n}f + P_{\mathcal W_n^\perp}f$ and explicitly track the aliasing of the out-of-space tail $P_{\mathcal W_n^\perp}f$ through the inversion process. This algebraic manipulation isolates the empirical cross-term $\widehat C_\Omega$, yielding the final bound.

\subsubsection{Orthonormal Bases}

Suppose now that we are given orthonormal bases $\{s_k\}_{k\in\N}$ and $\{w_k\}_{k\in\N}$ for $\mathcal S$ and $\mathcal W$ respectively. Suppose further that $\mathcal W\subseteq\mathcal S$: under these additional assumptions, the synthesis operator $S,W$ become isometries, and so does $U$:
\[
U^*U=W^*SS^*W=W^*P_{\mathcal S}W=W^*W=I_{\ell^2}.
\]
But then $\Sigma=\iota_n^*U^*U\iota_n=\iota_n^*\iota_n=I_{\C^n}$ is simply the identity $n\times n$ matrix, hence $\|\Sigma\|=\lambda_0=1$. We thus deduce immediately from Theorem~\ref{mainresult} the following result.
\begin{corollary}\label{full.ON.case}
    Consider the setting of Theorem \ref{mainresult}. Assume further that $\mathcal W\subseteq\mathcal S$ and that both $\{s_k\}_{k\in\N}$ and $\{w_k\}_{k\in\N}$ are orthonormal bases for $\mathcal S,\mathcal W$ respectively. Then the conclusions of Theorem \ref{mainresult} hold provided that $m\ge\frac{8}{3}R\log(2n/\delta)$.
\end{corollary}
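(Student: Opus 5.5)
This is a direct specialization of Theorem~\ref{mainresult}, so the plan is to verify that the orthonormal assumptions reduce the sample-size condition of that theorem to $m\ge\frac{8}{3}R\log(2n/\delta)$. Nothing else in the statement changes, so no new probabilistic argument should be needed.

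\emph{Assumptions.} Orthonormal bases are in particular a frame for $\mathcal S$ (with $A=B=1$) and a Riesz basis for $\mathcal W$ (with $C=D=1$), so the structural hypotheses of Theorem~\ref{mainresult} hold. The inclusion $\mathcal W\subseteq\mathcal S$ gives $\mathcal W\cap\mathcal S^\perp\subseteq\mathcal S\cap\mathcal S^\perp=\{0\}$, which is item i) of Assumption~\ref{assumptions}. Items ii) and iii) are conditions on $p$ alone and are inherited from the setting of Theorem~\ref{mainresult}.

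\emph{Computing $\Sigma$.} Since $\{s_k\}$ is an orthonormal basis of $\mathcal S$, we have $SS^*=P_{\mathcal S}$. Since $\{w_k\}$ is an orthonormal basis of $\mathcal W$, we have $W^*W=I_{\ell^2}$. Because $\ran W=\mathcal W\subseteq\mathcal S$, it follows that $P_{\mathcal S}W=W$. Hence
\[
U^*U=W^*SS^*W=W^*P_{\mathcal S}W=W^*W=I_{\ell^2},
\]
and therefore $\Sigma=\iota_n^*U^*U\iota_n=\iota_n^*\iota_n=I_{\C^n}$. This gives $\|\Sigma\|=1$ and $\|\Sigma^{-1}\|=1$. (Alternatively, Corollary~\ref{cor:Rlogn} with $A=B=C=D=1$ yields $\|\Sigma\|\le 1$ and $\|\Sigma^{-1}\|\le 1$, which already suffices.)

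\emph{Conclusion.} Substituting into the threshold of Theorem~\ref{mainresult} gives $\frac{8}{3}R\|\Sigma\|\|\Sigma^{-1}\|^2\log(2n/\delta)=\frac{8}{3}R\log(2n/\delta)$. So the hypothesis $m\ge\frac{8}{3}R\log(2n/\delta)$ is exactly the one required, and all conclusions (unique solvability and the error bound) transfer verbatim.

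There is no real obstacle. The only point needing care is the identity $SS^*=P_{\mathcal S}$: for an orthonormal basis of $\mathcal S$, $S^*$ annihilates $\mathcal S^\perp$ and restricts to an isometry on $\mathcal S$, so $SS^*$ is the orthogonal projection onto $\mathcal S$.
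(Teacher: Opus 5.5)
Your proposal is correct and follows the paper's own argument. The paper likewise computes $U^*U=W^*SS^*W=W^*P_{\mathcal S}W=W^*W=I_{\ell^2}$, hence $\Sigma=I_{\C^n}$ and $\|\Sigma\|=\|\Sigma^{-1}\|=1$, and then reads the threshold off Theorem \ref{mainresult}; your explicit check of Assumption \ref{assumptions} i) via $\mathcal W\subseteq\mathcal S$ is a minor detail that the paper leaves implicit.
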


This simplified orthonormal setting is frequently encountered in practice. Some of the most notable examples include the reconstruction of wavelet coefficients from Fourier measurements (the Fourier-Wavelet setup) and the approximation of analytic functions via Legendre polynomials from Fourier data. We will extensively study this Fourier-Legendre setup in Section \ref{sec:analytic.func} as a primary application of our theory.

\subsection{Concentration of the Empirical Cross-Term}\label{subsec:crossterm}

In Theorem \ref{mainresult}, the reconstruction error is bounded by a multiple of the theoretical best-approximation error, scaled by the factor $(1+K_{n,\Omega}^2)^{1/2}$ where $K_{n,\Omega}=\|W\iota_n\randgram^{-1}\widehat C_\Omega\|$. Clearly, such an estimate is of practical significance only if we can establish rigorous control over $K_{n,\Omega}$ as the sample size $m$ grows. In the deterministic Generalized Sampling framework, the corresponding cross-term parameter is known to converge to the deterministic limit $\|W\iota_n\Sigma^{-1}C\|$ \cite{adcock2012stable,adcock2012frames}. 

Motivated by this, our primary goal in this subsection is to prove that $K_{n,\Omega}$ converges in probability to this same limit, and to provide an explicit concentration inequality for this convergence. To establish this, we rely on the following algebraic decomposition:
\begin{equation}\label{eq.K_n,omega}
    |K_{n,\Omega}-\|W\iota_n\Sigma^{-1}C\|| = \left| \|W\iota_n\randgram^{-1}\widehat C_\Omega\|-\|W\iota_n\Sigma^{-1}C\| \right| \le\sqrt D\|\randgram^{-1}\widehat C_\Omega-\Sigma^{-1}C\|.
\end{equation}
Since we have already established that $\randgram \xrightarrow[m\to+\infty]{\P}\Sigma$, a standard continuity argument ensures $\randgram^{-1}\xrightarrow[m\to+\infty]{\P}\Sigma^{-1}$. Therefore, it suffices to prove that the empirical cross-term concentrates around its expectation ($\widehat C_\Omega\xrightarrow[m\to+\infty]{\P}C$) with a precise probability threshold $\tilde M_{\varepsilon,\delta}$, and then invoke the continuity of operator composition. 

However, analyzing $\widehat C_\Omega$ introduces specific technical challenges. Unlike the Gram matrix, $\widehat C_\Omega \colon \mathcal H \longrightarrow \C^n$ is a rectangular operator mapping between two distinct Hilbert spaces, one infinite dimensional, the other finite dimensional. To overcome this, we develop a novel extension of the Bernstein inequality. By combining the Hermitian dilation technique typically used for finite rectangular matrices with a dimension-free Bernstein inequality for Hilbert-Schmidt operators (Theorem \ref{op.bernstein}), we obtain the following general result.

\begin{theorem}[Bernstein inequality for random, rectangular operators]\label{op.rect.bernstein}
    Let $\mathcal H_1,\mathcal H_2$ be separable Hilbert spaces and let $X_1,\dots,X_m\colon\mathcal H_1\longrightarrow\mathcal H_2$ be independent, random operators such that $\E(X_t)=0$ for every $t\in\{1,\dots,m\}$. Assume that there exists $L,\sigma^2\ge0$ such that for every $t\in\{1,\dots,m\}$ one has:
    \begin{align*}
    &\|X_t\|\le L\quad\text{almost surely},\\
    &\max\{\|V_1\|,\|V_2\|\}\le\sigma^2,
    \end{align*}
    where $V_1:=\sum_{t=1}^m\E(X_t^*X_t)$ and $V_2:=\sum_{t=1}^m\E(X_tX_t^*)$. Finally, suppose that for every $t\in\{1,\dots,m\}$ the operators $X_t^*X_t$ and $X_tX_t^*$ are trace-class on $\mathcal H_1$ and $\mathcal H_2$ respectively. Then for every $\varepsilon\ge\frac{1}{6}(L+\sqrt{L^2+36\sigma^2})$ one has
    \[
    \P\left(\left\|\sum_{t=1}^m X_t\right\|\ge\varepsilon\right)\le 14\frac{\tr(V_1)+\tr(V_2)}{\max\{\|V_1\|,\|V_2\|\}}\exp\left(\frac{-\varepsilon^2/2}{\sigma^2+L\varepsilon/3}\right).
    \]
\end{theorem}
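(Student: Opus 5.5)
The plan is to reduce to the self-adjoint case by Hermitian dilation and then apply the dimension-free Bernstein inequality for self-adjoint Hilbert–Schmidt operators, Theorem \ref{op.bernstein}. We do not re-derive any concentration estimate. Set $\mathcal K:=\mathcal H_1\oplus\mathcal H_2$, which is a separable Hilbert space, and define for each $t$ the self-adjoint operator
\[
Y_t:=\begin{pmatrix}0 & X_t^*\\ X_t & 0\end{pmatrix}\colon\mathcal K\longrightarrow\mathcal K .
\]
The $Y_t$ are independent and centered since $\E(X_t)=0$. The first step is to record the standard dilation identities. Namely, $\|Y_t\|=\|X_t\|\le L$ almost surely, because $Y_t^2=\operatorname{diag}(X_t^*X_t,\,X_tX_t^*)$ and so $\|Y_t\|^2=\max\{\|X_t^*X_t\|,\|X_tX_t^*\|\}=\|X_t\|^2$. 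Likewise $\|\sum_t Y_t\|=\|\sum_t X_t\|$, applying the same identity to the dilation of $\sum_tX_t$.

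Next we compute the variance operator:
\[
V:=\sum_{t=1}^m\E(Y_t^2)=\begin{pmatrix}V_1&0\\0&V_2\end{pmatrix}.
\]
This gives $\|V\|=\max\{\|V_1\|,\|V_2\|\}\le\sigma^2$ and $\tr(V)=\tr(V_1)+\tr(V_2)$. The trace-class hypothesis on $X_t^*X_t$ and $X_tX_t^*$ says exactly that $Y_t^2$ is trace class, i.e. $Y_t$ is Hilbert–Schmidt on $\mathcal K$. This places the $Y_t$ in the setting of Theorem \ref{op.bernstein}. We also need $V$ itself to be trace class. This follows by taking expectations of traces, using monotone convergence since the $Y_t^2$ are positive. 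If $\tr(V)=\infty$ the claimed bound is trivial.

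We then invoke Theorem \ref{op.bernstein} for $\sum_tY_t$. I expect it to have the form of a Minsker-type bound, with effective dimension $\tr(V)/\|V\|$, prefactor $14$, and validity range $\varepsilon\ge\frac16(L+\sqrt{L^2+36\sigma^2})$. It should give
\[
\P\Big(\big\|\textstyle\sum_t Y_t\big\|\ge\varepsilon\Big)\le 14\,\frac{\tr(V)}{\|V\|}\exp\Big(\frac{-\varepsilon^2/2}{\sigma^2+L\varepsilon/3}\Big).
\]
Substituting the identities above yields the statement.

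The main obstacle is matching hypotheses precisely. First, Theorem \ref{op.bernstein} may be stated with the variance proxy equal to $\|V\|$ rather than an upper bound $\sigma^2$. In that case I would check that the effective-dimension factor and the threshold on $\varepsilon$ remain valid when $\|V\|$ is replaced by $\sigma^2\ge\|V\|$. The exponent is monotone in $\sigma^2$ and the threshold only increases, so this should work. Second, the statement keeps $\max\{\|V_1\|,\|V_2\|\}$ in the denominator rather than $\sigma^2$, which suggests the underlying theorem uses $\|V\|$ there. If instead it uses $\sigma^2$, one must argue the monotone direction carefully, or apply the theorem with $\sigma^2$ replaced by $\|V\|$ in the prefactor only. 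Finally, there is a measurability and well-definedness point for random operators on $\mathcal K$: the dilation of a (Bochner-)measurable Hilbert–Schmidt-valued variable is again measurable, so no new issue arises.
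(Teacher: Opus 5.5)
Your proposal is correct and follows the paper's proof essentially step by step. You dilate each $X_t$ to the self-adjoint operator $H(X_t)$ on $\mathcal H_1\oplus\mathcal H_2$, read off $\|H(X_t)\|=\|X_t\|$, get the Hilbert--Schmidt property from $H(X_t)^2=\operatorname{diag}(X_t^*X_t,X_tX_t^*)$, identify $\sum_t\E(H(X_t)^2)=\operatorname{diag}(V_1,V_2)$, and then apply Theorem~\ref{op.bernstein}. Your hypothesis-matching worries do not arise, because Theorem~\ref{op.bernstein} is stated with $\sigma^2$ as an upper bound on $\|\sum_t\E(X_t^2)\|$ and uses the effective rank $\tr(V)/\|V\|$ in the prefactor, exactly as needed. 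Your remark that the bound is trivial when $\tr(V)=\infty$ is a small point of care that the paper leaves implicit.
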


Although the proof fundamentally relies on combining two well-established variants of the Bernstein inequality, to the best of our knowledge, this specific formulation for random, rectangular operators between abstract Hilbert spaces does not appear in the existing literature. The full proof is deferred to Section \ref{subsec:proofs_concentration}.

With this tool at our disposal, we can now establish the concentration inequality for the cross-term factor $K_{n,\Omega}$.

\begin{theorem}\label{K_n,omega}
    Consider the setting of Theorem \ref{mainresult}. Let $\Lambda:=1+\|\Sigma^{-1}\|+\|C\|$ and let $\varepsilon\in(0,\Lambda\sqrt D\min\{1,2\|\Sigma^{-1}\|\})$ and $\delta\in(0,1)$. If
    \[
    m\ge M_{\varepsilon,\delta}^*:=\frac{10}{3}KR''\log(56n/\delta)\frac{\Lambda^2D}{\varepsilon^2}\max\left\{1,2\|\Sigma^{-1}\|\right\}^4,
    \]
    then
    \[
    \left|K_{n,\Omega}-\|W\iota_n\Sigma^{-1}C\|\right|<\varepsilon
    \]
    with probability at least $1-\delta$.
\end{theorem}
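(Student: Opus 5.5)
Starting from the decomposition \eqref{eq.K_n,omega}, it suffices to show that $\|\randgram^{-1}\widehat C_\Omega-\Sigma^{-1}C\|<\varepsilon/\sqrt D$ with probability at least $1-\delta$. We split
\[
\randgram^{-1}\widehat C_\Omega-\Sigma^{-1}C=\randgram^{-1}(\widehat C_\Omega-C)+(\randgram^{-1}-\Sigma^{-1})C,
\]
and use $\randgram^{-1}-\Sigma^{-1}=\randgram^{-1}(\Sigma-\randgram)\Sigma^{-1}$. On the event $\|\randgram-\Sigma\|\le\eta\le\lambda_0/2$, a Neumann series argument gives $\|\randgram^{-1}\|\le 2\|\Sigma^{-1}\|$. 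Hence
\[
\|\randgram^{-1}\widehat C_\Omega-\Sigma^{-1}C\|\le 2\|\Sigma^{-1}\|\,\|\widehat C_\Omega-C\|+2\|\Sigma^{-1}\|^2\|C\|\,\|\randgram-\Sigma\|.
\]
It is therefore enough to make both $\|\widehat C_\Omega-C\|$ and $\|\randgram-\Sigma\|$ smaller than a threshold of order $\eta:=\varepsilon/(\Lambda\sqrt D\max\{1,2\|\Sigma^{-1}\|\}^2)$, each with failure probability $\delta/2$. The restriction on $\varepsilon$ ensures that this $\eta$ is also below $\lambda_0/2=1/(2\|\Sigma^{-1}\|)$, so the invertibility event is automatically included. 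The factor $\Lambda=1+\|\Sigma^{-1}\|+\|C\|$ absorbs both coefficients, and the fourth power of $\max\{1,2\|\Sigma^{-1}\|\}$ in $M^*_{\varepsilon,\delta}$ comes from squaring $\eta^{-1}$.

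For the Gram term, we use Proposition \ref{prob.conv} (matrix Bernstein, Theorem \ref{bernstein}) at accuracy $\eta$. The summands are $X_t=\frac1m(v_{i_t}\otimes v_{i_t}/p_{i_t}-\Sigma)$, with $\|X_t\|\lesssim R/m$ and variance $\lesssim R\|\Sigma\|/m$. This gives failure probability at most $2n\exp(-c\,m\eta^2/(R\|\Sigma\|))$ whenever $\eta\le$ a constant times $\|\Sigma\|$. That is below $\delta/2$ once $m\gtrsim KR''\eta^{-2}\log(4n/\delta)$.

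For the cross term, we write $\widehat C_\Omega-C=\sum_t X_t$ with
\[
X_t=\frac1m\Big(\frac{v_{i_t}\otimes u_{i_t}}{p_{i_t}}-C\Big),
\]
where $\widehat C_\Omega=\frac1m\sum_t p_{i_t}^{-1}v_{i_t}\otimes u_{i_t}$ (as a map $h\mapsto\<h,u_j\>v_j$), and apply Theorem \ref{op.rect.bernstein}. The bound is $\|v_j\otimes u_j\|/p_j=\|v_j\|\|u_j\|/p_j\le\sqrt{RR'}\le R''$, so $L\le 2R''/m$. For the variances, $\E(X_t^*X_t)\preceq\frac1{m^2}\sum_j\|v_j\|^2 (u_j\otimes u_j)/p_j\preceq \frac{R}{m^2}T$ and similarly $\E(X_tX_t^*)\preceq\frac{R'}{m^2}\Sigma$. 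Thus $\sigma^2\le R''K/m$. Each $X_t^*X_t$ is finite rank, hence trace class. The intrinsic dimension factor $(\tr V_1+\tr V_2)/\max\|V_i\|$ must be bounded by a multiple of $n$. Here I would use $\tr V_1=\tr V_2\le\frac1m\E\|v_j\|^2\|u_j\|^2/p_j^2$, which is controlled by $\frac{R'}{m}\tr\Sigma$, together with $\max\|V_i\|\ge\|V_2\|\gtrsim\frac1m\|\E\,\|u\|^2 v\otimes v/p^2\|$. This ratio step is the main obstacle, and if the quotient is delicate I would bound it crudely by $2\,\mathrm{rank}$-type arguments (the $V_2$ side lives in $\C^n$) to get at most a factor of $2n$. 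The constant $14\cdot 2n=28n$, doubled for $\delta/2$, then matches $\log(56n/\delta)$.

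Finally, we solve $28n\exp\big(-\frac{m\eta^2/2}{R''K+R''\eta\cdot 2/3}\big)\le\delta/2$. Using $\eta\lesssim K$ (guaranteed by the range of $\varepsilon$), this gives $m\ge\frac{10}{3}KR''\eta^{-2}\log(56n/\delta)$. We also check that the side condition $\eta\ge\frac16(L+\sqrt{L^2+36\sigma^2})$ holds for such $m$, or else the event is trivially handled. A union bound over the two events then yields the claim.
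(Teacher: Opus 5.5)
Your argument is correct and is essentially the paper's proof. You reduce via \eqref{eq.K_n,omega} to bounding $\|\randgram^{-1}\widehat C_\Omega-\Sigma^{-1}C\|$, control the Gram and cross-term deviations each with failure probability $\delta/2$ through Proposition~\ref{prob.conv} and your re-derivation of Proposition~\ref{crossterm} (with the intrinsic-dimension ratio handled exactly as in the paper via $\tr V_1=\tr V_2\le n\|V_2\|$), and take a union bound. The only difference is cosmetic: the paper invokes Lemmas~\ref{cont.comp} and~\ref{cont.inv} with separate thresholds $\varepsilon/(\Lambda\sqrt D)$ and $\varepsilon/(4\|\Sigma^{-1}\|^2\Lambda\sqrt D)$, whereas you do the perturbation by hand with the resolvent identity and a single $\eta$ equal to the smaller of the two. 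This yields the identical $M^*_{\varepsilon,\delta}$, with the small bonus that $\eta<\lambda_0/2\le\|\Sigma\|$ automatically meets the hypothesis $\varepsilon\le\|\Sigma\|$ of both propositions.
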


If we specialize this result to the orthonormal case and assume geometric compatibility between the spaces, the cross-term behavior simplifies dramatically. We are able to prove that the cross-term vanishes entirely by only requiring the sampling system $\{s_k\}_{k\in\N}$ to be orthonormal.

\begin{corollary}\label{half.ONB.case}
     If $\{s_k\}_{k\in\N}$ is an orthonormal basis for $\mathcal S$ and $\mathcal W\subseteq\mathcal S$, then $C=0$, hence $K_{n,\Omega}\xrightarrow[m\to+\infty]{\P}0$.
\end{corollary}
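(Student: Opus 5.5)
We first show that $C=0$ by a direct computation, and then read off the convergence in probability from Theorem \ref{K_n,omega}. Since $\{s_k\}_{k\in\N}$ is an orthonormal basis for $\mathcal S$, the synthesis operator $S$ is an isometry from $\ell^2$ onto $\mathcal S$. Therefore $SS^*=P_{\mathcal S}$, the orthogonal projection onto $\mathcal S$.

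Recall that $C=\iota_n^*U^*S^*P_{\mathcal W_n^\perp}$ with $U=S^*W$, so that
\[
C=\iota_n^*W^*SS^*P_{\mathcal W_n^\perp}=\iota_n^*W^*P_{\mathcal S}P_{\mathcal W_n^\perp}.
\]
Because $\mathcal W\subseteq\mathcal S$, the range of $W$ lies in $\mathcal S$. Hence $P_{\mathcal S}W=W$, and taking adjoints gives $W^*P_{\mathcal S}=W^*$. It follows that $C=\iota_n^*W^*P_{\mathcal W_n^\perp}=(P_{\mathcal W_n^\perp}W\iota_n)^*$. The operator $W\iota_n$ has range $\mathcal W_n$, so $P_{\mathcal W_n^\perp}W\iota_n=0$, and therefore $C=0$. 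In coordinates, the $k$-th component of $Ch$ is $\<P_{\mathcal W_n^\perp}h,w_k\>=0$ for every $k\le n$.

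With $C=0$, the deterministic limit $\|W\iota_n\Sigma^{-1}C\|$ vanishes. Theorem \ref{K_n,omega} then says that for every admissible $\varepsilon$ and every $\delta$, once $m\ge M^*_{\varepsilon,\delta}$ we have $\P(K_{n,\Omega}\ge\varepsilon)<\delta$. Admissible here means $\varepsilon$ is below the fixed threshold $\Lambda\sqrt D\min\{1,2\|\Sigma^{-1}\|\}$.

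For $\varepsilon$ at or above that threshold, we fix any smaller admissible $\varepsilon'$. The event $\{K_{n,\Omega}\ge\varepsilon\}$ is contained in $\{K_{n,\Omega}\ge\varepsilon'\}$, so the same bound applies. Consequently $\P(K_{n,\Omega}\ge\varepsilon)\to0$ as $m\to+\infty$ for every $\varepsilon>0$, which is exactly $K_{n,\Omega}\xrightarrow{\P}0$.

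The only point needing care is that Theorem \ref{K_n,omega} requires the setting of Theorem \ref{mainresult}, in particular Assumption \ref{assumptions}. These assumptions are inherited from the context of the corollary. Part (i) of the assumption holds automatically, since $\mathcal W\subseteq\mathcal S$ gives $\mathcal W\cap\mathcal S^\perp=\{0\}$. Beyond this check the argument is essentially algebraic, and I do not expect a genuine obstacle.
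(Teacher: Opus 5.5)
Your proof is correct and follows the paper's argument. Both reduce $C$ to $\iota_n^*W^*P_{\mathcal S}P_{\mathcal W_n^\perp}$ via $SS^*=P_{\mathcal S}$ and then use $\mathcal W\subseteq\mathcal S$ to show it vanishes; the only cosmetic difference is that you absorb $P_{\mathcal S}$ through $W^*P_{\mathcal S}=W^*$, whereas the paper first inserts $P_{\mathcal W_n}$ (using $\ker(\iota_n^*W^*)=\mathcal W_n^\perp$) and then uses $P_{\mathcal W_n}P_{\mathcal S}=P_{\mathcal W_n}$. Your deduction of $K_{n,\Omega}\xrightarrow{\P}0$ from Theorem \ref{K_n,omega}, including the monotonicity step for large $\varepsilon$, correctly fills in what the paper leaves implicit (the theorem gives $\P(K_{n,\Omega}\ge\varepsilon)\le\delta$ rather than $<\delta$, which is immaterial).
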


This corollary highlights a remarkable feature of orthonormal sampling systems: the aliasing error caused by the out-of-space tail of the signal is completely eliminated in the limit. Because $K_{n,\Omega} \xrightarrow{\P} 0$, our randomized reconstruction asymptotically achieves the theoretical best-approximation error $\|f-\tilde f\| \le \|P_{\mathcal W_n^\perp}f\|$ without suffering from any noise amplification from the un-reconstructed tail.

\subsection{Leverage-score Sampling}\label{subsec:leverage_score}

Up to this point, our stochastic framework has accommodated any arbitrary probability distribution $p$ satisfying Assumption \ref{assumptions}. We now specialize our analysis to the optimal choice of measure, which minimizes the coherence parameter $R$ and consequently yields the most efficient sampling rate.

Recall from Proposition \ref{trace} that the trace of the matrix $\Sigma$ is exactly $\sum_{j\in\N}\|v_j\|^2$. We can thus define the following probability distribution on $\N$:
\[
p_j:=\frac{\|v_j\|^2}{\tr(\Sigma)}\quad\text{for every }j\in\N.
\]
A randomized sampling strategy that employs this specific distribution will be referred to as \textit{leverage-score sampling}. Note that this density naturally satisfies Assumption \ref{assumptions}: clearly, $v_j\neq0$ forces $p_j>0$. Moreover, this optimal choice yields:
\[
R=\sup_{j\in\supp(p)}\frac{\|v_j\|^2}{p_j}=\tr(\Sigma)<+\infty.
\]
Substituting this value of $R$ into our main theorem immediately yields the following result.
\begin{corollary}\label{leverage}
    Consider the setting of Theorem \ref{mainresult} and choose $p_j=\frac{\|v_j\|^2}{\tr(\Sigma)}$ for every $j\in\N$. Then the conclusions of Theorem \ref{mainresult} hold provided that $m\ge\frac{8}{3}\tr(\Sigma)\|\Sigma\|\|\Sigma^{-1}\|^2\log(2n/\delta)$.
\end{corollary}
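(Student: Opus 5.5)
Corollary~\ref{leverage} follows directly from Theorem~\ref{mainresult} once the leverage-score distribution is checked to be admissible and $R$ is computed. The plan has three steps.

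\textbf{Well-definedness of $p$.} We need $0<\tr(\Sigma)<+\infty$. Finiteness holds because $\Sigma=\iota_n^*U^*U\iota_n$ is an $n\times n$ matrix. By Proposition~\ref{trace},
\[
\tr(\Sigma)=\sum_{j}\|v_j\|^2 .
\]
Under Assumption~\ref{assumptions}(i), Proposition~\ref{sigma.invertible} shows that $\Sigma$ is strictly positive definite, so $\tr(\Sigma)\ge n\lambda_0>0$. Hence $p_j=\|v_j\|^2/\tr(\Sigma)\ge0$ and $\sum_j p_j=1$, so $p$ is a probability distribution on $\N$.

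\textbf{Assumption~\ref{assumptions}.} Item (i) is part of the setting of Theorem~\ref{mainresult}. Item (iii) holds because $v_j\neq0$ implies $p_j>0$; in fact $\supp(p)=\{j\st v_j\neq0\}$. For item (ii), every $j\in\supp(p)$ satisfies $\|v_j\|^2/p_j=\tr(\Sigma)$, so
\[
R=\sup_{j\in\supp(p)}\frac{\|v_j\|^2}{p_j}=\tr(\Sigma)<+\infty .
\]
Finiteness of $R'$ must also be verified. By definition $u_j=P_{\mathcal W_n^\perp}s_j$, so $\|u_j\|^2\le\|s_j\|^2\le B$, using that the upper frame bound gives $|\langle s_j,s_j\rangle|^2\le B\|s_j\|^2$. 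The difficulty is that $p_j$ may be small while $\|u_j\|$ is not, so $\|u_j\|^2/p_j$ need not be bounded.

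\textbf{The main obstacle.} I expect the finiteness of $R'$ to be the only genuine difficulty. However, the proof of Theorem~\ref{mainresult} given in Section~\ref{sec:proofs} should use only the Gram-matrix concentration (Proposition~\ref{prob.conv}), which depends on $R$ alone; $R'$ enters only through the cross-term analysis of Theorem~\ref{K_n,omega}. I would therefore check that the invertibility of $\randgram$ and the error bound $\|f-\tilde f\|\le\|P_{\mathcal W_n^\perp}f\|(1+K_{n,\Omega}^2)^{1/2}$ rely only on $R$ and on (i) and (iii). If so, I would note that $R'$ is not needed for the conclusions of Theorem~\ref{mainresult}. If the proof does use $R'$, I would instead state the corollary under the standing hypothesis that $R'<\infty$.

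\textbf{Conclusion.} Substituting $R=\tr(\Sigma)$ into the sample-size threshold of Theorem~\ref{mainresult} gives exactly
\[
m\ge\tfrac{8}{3}\tr(\Sigma)\|\Sigma\|\|\Sigma^{-1}\|^2\log(2n/\delta),
\]
under which all conclusions of Theorem~\ref{mainresult} hold with probability at least $1-\delta$.
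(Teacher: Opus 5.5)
Your proposal is correct and follows the paper's argument, which only observes that $v_j\neq0$ forces $p_j>0$ and that $R=\tr(\Sigma)$, then substitutes this into the threshold of Theorem~\ref{mainresult}. Your concern about $R'$ is justified: the paper never checks it, and it is genuinely infinite in typical cases (e.g.\ orthonormal $\{s_k\}$ with infinitely many $v_j\neq0$, where $\|u_j\|^2\to1$ while $p_j\to0$). Your proposed check does succeed, because the proof of Theorem~\ref{mainresult} relies only on Proposition~\ref{prob.conv}, Corollary~\ref{randgram.inv} and the identity $f-\tilde f=P_{\mathcal W_n^\perp}f-W\iota_n\randgram^{-1}\widehat C_\Omega f$, none of which involve $R'$, so your first branch is the one that applies.
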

We remark that, since $\tr(\Sigma)\le n\|\Sigma\|$, the stable sampling rate with the leverage-score choice can be safely bounded by
\[
m\ge\frac{8}{3}\kappa(\Sigma)^2n\log(2n/\delta),
\]
where $\kappa(\Sigma)=\|\Sigma\|\|\Sigma^{-1}\|$ is the condition number of $\Sigma$. Under the additional subspace condition $\mathcal W\subseteq\mathcal S$, we know from Proposition \ref{unifbound.cond_number} that this condition number is controlled uniformly in $n$ by the frame/Riesz bounds. We thus obtain the following universal rate.

\begin{corollary}\label{cor:leverage+subspace.condition}
    Consider the setting of Theorem \ref{mainresult} and choose $p_j=\frac{\|v_j\|^2}{\tr(\Sigma)}$ for every $j\in\N$. Assume further that $\mathcal W\subseteq\mathcal S$. Then the conclusions of Theorem \ref{mainresult} hold provided that $m\gtrsim n\log(2n/\delta)$.
\end{corollary}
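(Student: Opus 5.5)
The plan is to chain Corollary \ref{leverage} with a uniform-in-$n$ bound on the condition number of $\Sigma$. With the leverage-score choice $p_j=\|v_j\|^2/\tr(\Sigma)$ we have $R=\tr(\Sigma)$. Corollary \ref{leverage} then shows that the conclusions of Theorem \ref{mainresult} hold once
\[
m\ge\tfrac{8}{3}\tr(\Sigma)\|\Sigma\|\|\Sigma^{-1}\|^2\log(2n/\delta).
\]
Since $\Sigma$ is an $n\times n$ positive semidefinite matrix, $\tr(\Sigma)\le n\|\Sigma\|$. The sufficient condition is therefore implied by
\[
m\ge\tfrac{8}{3}\kappa(\Sigma)^2 n\log(2n/\delta).
\]
It remains to show that $\kappa(\Sigma)$ is bounded by a constant that depends only on the frame and Riesz bounds $A,B,C,D$, and not on $n$.

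For this I will use the bounds from Corollary \ref{cor:Rlogn} (equivalently Proposition \ref{unifbound.cond_number}): under $\mathcal W\subseteq\mathcal S$ one has $\|\Sigma\|\le BD$ and $\|\Sigma^{-1}\|\le 1/(AC)$. For completeness, these follow directly. For $x\in\C^n$,
\[
\<\Sigma x,x\>=\|S^*W\iota_n x\|^2 .
\]
Because $W\iota_n x\in\mathcal W\subseteq\mathcal S$, the frame inequality gives
\[
A\|W\iota_n x\|^2\le \|S^*W\iota_n x\|^2\le B\|W\iota_n x\|^2 .
\]
The Riesz inequality gives
\[
C\|x\|^2\le\|W\iota_nx\|^2\le D\|x\|^2 .
\]
Combining the two yields $AC\|x\|^2\le\<\Sigma x,x\>\le BD\|x\|^2$, and hence $\kappa(\Sigma)\le BD/(AC)$.

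Substituting this bound, the sample size condition reduces to
\[
m\ge\tfrac{8}{3}\left(\tfrac{BD}{AC}\right)^2 n\log(2n/\delta),
\]
which is $m\gtrsim n\log(2n/\delta)$ with an implicit constant independent of $n$ and of $\delta$. The leverage-score distribution satisfies Assumption \ref{assumptions}, as noted before Corollary \ref{leverage}. Point ii) holds because $R=\tr(\Sigma)\le nBD<\infty$; the finiteness of $R'$ enters only Theorem \ref{K_n,omega} and not the conclusions of Theorem \ref{mainresult}. Point i) holds because $\mathcal W\subseteq\mathcal S$ implies $\mathcal W\cap\mathcal S^\perp=\{0\}$.

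No step is a real obstacle. The only point needing care is that the lower frame bound $A$ applies only to vectors in $\mathcal S$, which is exactly where the hypothesis $\mathcal W\subseteq\mathcal S$ is used.
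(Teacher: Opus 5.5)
Your proposal is correct and follows the paper's proof: Corollary \ref{leverage} with $R=\tr(\Sigma)\le n\|\Sigma\|$, combined with the $n$-independent bound $\kappa(\Sigma)\le BD/(AC)$ from Proposition \ref{unifbound.cond_number}. Your direct quadratic-form derivation $AC\|x\|^2\le\langle\Sigma x,x\rangle\le BD\|x\|^2$ is, if anything, cleaner than the paper's route through Lemma \ref{below.bound.adjoint}. Your remark that finiteness of $R'$ plays no role in the conclusions of Theorem \ref{mainresult} is also accurate.
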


This corollary highlights a profound advantage of the stochastic framework. By employing leverage-score sampling, we achieve an explicit, near-linear sample complexity of $m \gtrsim n \log n$ that is \textit{completely independent of the specific choice of the bases}. This stands in stark contrast to deterministic generalized sampling, where the stable sampling rate is strictly basis-dependent, often impossible to compute explicitly, and frequently suffers from a severe quadratic scaling barrier ($m \gtrsim n^2$).

When orthonormal bases are employed, the asymptotic sampling rate remains $\mathcal{O}(n \log n)$, but we obtain tighter constants and, crucially, the cross-term factor $K_{n,\Omega}$ decays to zero. Indeed, if both $\{s_k\}_{k\in\N}$ and $\{w_k\}_{k\in\N}$ are orthonormal bases and $\mathcal W\subseteq\mathcal S$, then $\Sigma=I_{\C^n}$, therefore $R=\tr(I_{\C^n})=n$. We immediately derive the following.

\begin{corollary}\label{cor:onb_nlogn}
    Consider the setting of Corollary \ref{full.ON.case} and choose $p_j=\frac{\|v_j\|^2}{n}$ for every $j\in\N$. Then the conclusions of Theorem \ref{mainresult} hold provided that $m\ge\frac{8}{3}n\log(2n/\delta)$.
\end{corollary}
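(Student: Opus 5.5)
This is a direct specialization of Corollary \ref{full.ON.case}, which in turn follows from Theorem \ref{mainresult}. The plan is to check three things for the choice $p_j=\|v_j\|^2/n$: that it is a probability distribution, that it satisfies Assumption \ref{assumptions}, and that it gives $R=n$. Inserting $R=n$ into the threshold $m\ge\frac{8}{3}R\log(2n/\delta)$ of Corollary \ref{full.ON.case} then gives the claim.

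\textbf{Step 1: $p$ is a probability distribution.} In the orthonormal setting with $\mathcal W\subseteq\mathcal S$, the computation preceding Corollary \ref{full.ON.case} shows that $U$ is an isometry, so $\Sigma=I_{\C^n}$. By Proposition \ref{trace},
\[
\sum_{j\in\N}\|v_j\|^2=\tr(\Sigma)=n .
\]
As a sanity check, this is the direct computation $\sum_j\|\iota_n^*U^*e_j\|^2=\|U\iota_n\|_{HS}^2=\tr(\iota_n^*U^*U\iota_n)=n$. Hence $p_j\ge0$ and $\sum_j p_j=1$.

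\textbf{Step 2: Assumption \ref{assumptions}.}
\begin{enumerate}[i)]
\item The condition $\mathcal W\cap\mathcal S^\perp=\{0\}$ is immediate from $\mathcal W\subseteq\mathcal S$.
\item Condition \eqref{supp(p)} holds because $p_j>0$ exactly when $v_j\neq0$.
\item On the support of $p$ we have $\|v_j\|^2/p_j=n$, so $R=n<+\infty$.
\item Finiteness of $R'$ is the only point needing an argument. Since $\{s_j\}$ is orthonormal, $\|u_j\|=\|P_{\mathcal W_n^\perp}s_j\|\le1$, so the ratio is at most $n/\|v_j\|^2$ on the support. Whether this is bounded is not automatic, and I expect it to be the main technical obstacle. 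The most natural remedy is to observe that only $R$ enters the sample-complexity condition of Theorem \ref{mainresult}, through the bound on the Gram matrix in Proposition \ref{prob.conv}. The quantity $R'$ is used only in the cross-term analysis of Theorem \ref{K_n,omega}, so the error bound of Theorem \ref{mainresult} does not depend on it. Failing that, one could adopt the implicit convention that Assumption \ref{assumptions} ii) is inherited from the setting of Corollary \ref{full.ON.case}.
\end{enumerate}

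\textbf{Step 3: Conclude.} With $R=n$, $\|\Sigma\|=\|\Sigma^{-1}\|=1$, the hypothesis of Theorem \ref{mainresult} (equivalently Corollary \ref{full.ON.case}, or Corollary \ref{leverage} with $\tr(\Sigma)=n$) becomes $m\ge\frac{8}{3}n\log(2n/\delta)$. Under this condition, with probability at least $1-\delta$, the weighted least-squares problem has a unique solution and the stated error bound holds.
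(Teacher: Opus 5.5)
Your proof is correct and follows the paper's route: with $\Sigma=I_{\C^n}$ one gets $R=\tr(\Sigma)=n$, and substituting this into Corollary~\ref{full.ON.case} gives the threshold $m\ge\frac{8}{3}n\log(2n/\delta)$. Your caution about $R'$ is well placed and in fact understated: since $\|u_j\|^2=1-\|v_j\|^2$ and $\|v_j\|\to0$ along any infinite support, $R'=+\infty$ in general (e.g.\ Fourier--Legendre with $n\ge2$), so the ``inherit by convention'' fallback is unavailable; your first remedy is nonetheless valid, because the proof of Theorem~\ref{mainresult} needs only $R<+\infty$ (through Proposition~\ref{prob.conv}), Assumption~\ref{assumptions}~i) and the support condition, never $R'$, a point the paper glosses over when it asserts that the leverage-score distribution satisfies Assumption~\ref{assumptions}.
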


\subsubsection{Connections with Christoffel Sampling}

In the literature of randomized least-squares approximation for continuous functions, optimal sampling distributions are intimately tied to the concept of the \textit{Christoffel function}. Given that our leverage-score distribution $p_j \propto \|v_j\|^2$ yields the optimal $m \gtrsim n \log n$ rate in the discrete, abstract operator setting, it is natural to ask how our framework mathematically mirrors this established continuous concept. 

To see this connection, let us briefly review the continuous setting. If $(\mathcal D,\rho)$ is a measure space and $\mathcal P\subseteq L^2(\mathcal D,\rho)$ is an $n$-dimensional subspace where pointwise evaluation is defined, the (reciprocal) Christoffel function $\mathcal K_\mathcal P\colon\mathcal D\longrightarrow[0,+\infty)$ is defined such that $\mathcal K_\mathcal P(x)$ is the squared norm of the evaluation functional $p\in\mathcal P\mapsto p(x)\in\C$. (This functional is well-defined by assumption and continuous due to the finite dimensionality of $\mathcal P$). 

If $\mu$ is a probability measure on $\mathcal D$ that is absolutely continuous with respect to $\rho$, it admits a positive density $d\mu=\nu\,d\rho$. By defining the weight function $w:=\frac{1}{\nu}$, the global coherence of the subspace is captured by the parameter:
\[
\kappa_w(\mathcal P):=\|w\,\mathcal K_\mathcal P\|_{L^\infty(\mathcal D,\rho)}.
\]
It is a well-established result \cite{adcock2025optimal} that if $m$ samples $x_1,\dots,x_m\in\mathcal D$ are drawn independently according to $\mu$, one can stably recover a function $f\in L^2(\mathcal D,\rho)$ from its pointwise evaluations using a finite-dimensional reconstruction $\tilde f\in\mathcal P$ with probability at least $1-\delta$, provided that:
\[
m\gtrsim\kappa_w(\mathcal P)\log(2n/\delta).
\]
Comparing this to our stable sampling rate, it is evident that $\kappa_w(\mathcal P)$ plays the exact same role as our coherence parameter $R$. We now rigorously explore this connection both for the orthonormal setting and the frame-to-Riesz scenario.

\smallskip\noindent
\textit{Orthonormal Bases.} 
In our discrete setting, the domain is $\mathcal D=\N$ and $\rho$ is the counting measure, meaning $L^2(\mathcal D,\rho)=\ell^2$. Our $n$-dimensional subspace is $\mathcal P=U\iota_n(\C^n)=S^*(\mathcal W_n)$. The density $\nu$ corresponds to our probability distribution $p=(p_j)_{j\in\N}$ (which acts as the Radon-Nikodym derivative with respect to the counting measure), yielding the weight function $w(j)=\frac{1}{p_j}$. Assuming for simplicity that $p$ is strictly positive on $\N$, the next lemma demonstrates that in the orthonormal setting, our parameter $R$ is exactly the Christoffel parameter.

\begin{lemma}\label{christoffel.function}
    With the above notation and assuming orthonormal bases, we have:
    \[
    \kappa_w(\mathcal P)=\sup_{j\in\N}\frac{\|v_j\|^2}{p_j}=:R.
    \]
\end{lemma}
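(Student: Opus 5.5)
The plan is to compute the reciprocal Christoffel function $\mathcal K_{\mathcal P}(j)$ explicitly for every $j\in\N$ and show that it equals $\|v_j\|^2$. The claim then follows at once: since $\rho$ is the counting measure and $p_j>0$ for every $j$, the $L^\infty(\mathcal D,\rho)$ norm is just a supremum over $\N$, and $w(j)=1/p_j$ gives
\[
\kappa_w(\mathcal P)=\sup_{j\in\N}\frac{\mathcal K_{\mathcal P}(j)}{p_j}=\sup_{j\in\N}\frac{\|v_j\|^2}{p_j}=R .
\]

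\textbf{Step 1: an orthonormal basis of $\mathcal P$.} In the orthonormal setting with $\mathcal W\subseteq\mathcal S$ we have seen that $U$ is an isometry on $\ell^2$, since $U^*U=I_{\ell^2}$. Hence $U\iota_n\colon\C^n\to\ell^2$ is an isometry with range $\mathcal P$. It follows that $\{U\iota_n\mathbf{e_k}\}_{k=1}^n=\{Ue_k\}_{k=1}^n$ is an orthonormal basis of $\mathcal P$, and every element of $\mathcal P$ can be written as $q=U\iota_n x$ with $\|q\|=\|x\|$.

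\textbf{Step 2: the evaluation functional at $j$.} For such a $q$,
\[
q(j)=\<U\iota_n x,e_j\>=\<x,\iota_n^*U^*e_j\>=\<x,v_j\>.
\]
Since $x\mapsto U\iota_n x$ is an isometric isomorphism of $\C^n$ onto $\mathcal P$, the norm of the evaluation functional is
\[
\sup_{\|x\|=1}|\<x,v_j\>|=\|v_j\|,
\]
by Cauchy--Schwarz, with equality at $x=v_j/\|v_j\|$ (both sides vanish if $v_j=0$). Therefore $\mathcal K_{\mathcal P}(j)=\|v_j\|^2$.

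Equivalently, one can use the classical formula for the reciprocal Christoffel function in an orthonormal basis, $\mathcal K_{\mathcal P}(j)=\sum_{k=1}^n|(Ue_k)_j|^2$. Here $(Ue_k)_j=\<S^*w_k,e_j\>=\<w_k,s_j\>$, so this is $\sum_{k=1}^n|\<w_k,s_j\>|^2=\|v_j\|^2$, which also matches the coherence expression given in the introduction.

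\textbf{Main point to check.} The only real work is confirming that $U\iota_n$ is an isometry, since this is exactly where the orthonormality hypotheses enter. It relies on $W$ being an isometry, $SS^*=P_{\mathcal S}$ for an orthonormal basis of $\mathcal S$, and $\mathcal W\subseteq\mathcal S$. Without these, the Christoffel function would instead be $v_j^*\Sigma^{-1}v_j$, and the identity with $R$ would fail.
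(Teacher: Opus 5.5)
Your proposal is correct and is essentially the paper's argument. The paper also uses that $U\iota_n$ is an isometry, so $\{Ue_1,\dots,Ue_n\}$ is an orthonormal basis of $\mathcal P$, and then applies the orthonormal-basis formula (Proposition \ref{christoffel.onb}) to get $\mathcal K_\mathcal P(j)=\sum_{k=1}^n|\<Ue_k,e_j\>|^2=\|v_j\|^2$, which is exactly your ``equivalently'' remark; your Step 2, which computes the norm of $x\mapsto\<x,v_j\>$ directly through the isometric parametrization, is just a slightly more self-contained way to reach the same identity.
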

The proof of this lemma is deferred to Subsection \ref{subsec:proofs_leverage}.

\smallskip\noindent
\textit{Frame-to-Riesz Case.} 
It is natural to ask if the exact equality $\kappa_w(\mathcal P)=R$ holds in the more general Riesz basis case. To answer this, we must explicitly compute the Christoffel function $\mathcal K_\mathcal P(j)$ for our discrete subspace $\mathcal P$. 

By definition, $\mathcal K_\mathcal P(j)$ is the squared norm of the evaluation functional $a \mapsto a_j$ on $\mathcal P$. By the Riesz Representation Theorem, there exists a unique representer $q_j\in\mathcal P$ such that $\<a,q_j\>_{\ell^2}=a_j = \<a, e_j\>_{\ell^2}$ for every $a\in\mathcal P$. This orthogonality condition implies that the difference $q_j-e_j$ lies in $\mathcal P^\perp$.

Because $q_j\in\mathcal P$, taking the orthogonal projection $P_{\mathcal P}$ yields $q_j=P_{\mathcal P}e_j$. Utilizing the explicit formula for the projection onto the range of $U\iota_n$, we find:
\[
q_j = P_{\ran(U\iota_n)}e_j = U\iota_n(\iota_n^* U^* U \iota_n)^{-1}\iota_n^*U^*e_j = U\iota_n\Sigma^{-1}v_j.
\]
We can now compute the Christoffel function by evaluating the squared norm of this representer:
\begin{align}\label{christoffel_riesz}
    \mathcal K_\mathcal P(j) &= \|q_j\|_{\ell^2}^2 = \<U\iota_n\Sigma^{-1}v_j,U\iota_n\Sigma^{-1}v_j\>_{\ell^2} \nonumber \\
    &= \<\Sigma^{-1}v_j,\Sigma\Sigma^{-1}v_j\>_{\ell^2} = \<\Sigma^{-1}v_j,v_j\>_{\ell^2}.
\end{align}
Consequently, the weighted Christoffel parameter is not exactly $R$, but rather:
\[
\kappa_w(\mathcal P)=\sup_{j\in\N}\frac{\<\Sigma^{-1}v_j,v_j\>_{\ell^2}}{p_j}.
\]
By bounding this quadratic form, we see that the parameters are equivalent up to the spectral bounds of the finite section $\Sigma$:
\[
\frac{1}{\|\Sigma\|}R\le\kappa_w(\mathcal P)\le\|\Sigma^{-1}\|R.
\]

\subsection{Extension to \texorpdfstring{$\{w_k\}_{k\in\N}$}{wk} Frame}

We now consider the setting where we relax the condition on the reconstruction system $\{w_k\}_{k\in\N}$, requiring it only to be a frame for $\mathcal W$ rather than a Riesz basis.

Recall that any Riesz basis constitutes a frame with the identical bounds (cf. \cite[Theorem 5.4.1]{christensen2003introduction}). For this reason, we deliberately retain the notation $C,D$ to denote the frame bounds of the synthesis operator $W$. This guarantees that our broader frame notation naturally recovers the previously established Riesz bounds when restricted to that special case. 

This generalization has been studied in the deterministic setting by Adcock-Hansen-Herrholz-Teschke \cite{adcock2012frames}, where they developed analogous stable sampling rates to those in the Riesz scenario \cite{adcock2012generalized}. The core issue in the frame setting is that, although the convergence in probability $\randgram\xrightarrow[m\to+\infty]{\P}\Sigma$ remains true (Proposition \ref{prob.conv} does not rely on $\{w_k\}_{k\in\N}$ being a Riesz basis), we no longer have the guarantee that $\randgram$ is eventually invertible. Because the synthesis operator $W$ is only required to be bounded and surjective onto $\mathcal W$ (not necessarily injective), the limiting finite section $\Sigma$ itself need not be invertible.

To overcome this lack of invertibility, we employ the Moore-Penrose pseudo-inverse, following the approach of \cite{adcock2012frames}. The recovery $\tilde f$ is now defined as:
\begin{equation}\label{tilde.f.frames}
    \tilde f:=W\iota_n\randgram^\dagger\iota_n^*U^*Q_\Omega S^*f.
\end{equation}
In this subsection, $\lambda_0>0$ will denote the smallest \textit{non-zero} eigenvalue of $\Sigma$. With this extended notation, the minimal stable sampling rate $M_\delta=M_{\lambda_0,\delta}$ coincides with the one proposed in the Riesz scenario, essentially due to the fact that $\|\Sigma^\dagger\|=\lambda_0^{-1}$. 

As before, we recall the weighted sampling operator $Q_\Omega$, the empirical Gram matrix $\randgram$, and the empirical cross-term $\widehat C_\Omega$.

\begin{theorem}\label{main+frames}
    Let $\mathcal H$ be a separable Hilbert space. Let $\{s_k\}_{k\in\N}$ and $\{w_k\}_{k\in\N}$ be frames for their respective closed spans $\mathcal S$ and $\mathcal W$, with synthesis operators $S$ and $W$. 
    
    Let $f\in\mathcal H$ and let $\mathcal W_n:=\Span\{w_1,\dots,w_n\}$. Suppose we draw $m$ i.i.d. sample indices $\Omega=\{i_1,\dots,i_m\}$ from $\N$ according to a probability distribution $p = (p_j)_{j\in\N}$. Let $y_t:=\<f,s_{i_t}\>$ be the acquired samples. 
    
    Under Assumption \ref{assumptions}, for any $\delta\in(0,1)$, if the sample size $m$ satisfies 
    \[
    m\ge\frac{8}{3} R\|\Sigma\|\|\Sigma^\dagger\|^2\log(2n/\delta),
    \]
    then with probability at least $1-\delta$, the weighted least-squares problem
    \[
    \underset{x\in\mathbb{C}^n}{\operatorname{argmin}}\sum_{t=1}^m\frac{1}{p_{i_t}}|\<W\iota_n x,s_{i_t}\>-y_t|^2
    \]
    admits a unique minimal norm solution $\tilde x$. Moreover, the reconstructed signal $\tilde f:=W\iota_n\tilde x$ satisfies the error bound
    \[
    \|f-\tilde f\|\le\|P_{\mathcal W_n^\perp}f\|\left(1+K_{n,\Omega}^2\right)^\frac{1}{2},
    \]
    where $K_{n,\Omega}:=\|W\iota_n\randgram^\dagger\widehat C_\Omega\|$.
\end{theorem}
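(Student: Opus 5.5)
The plan is to follow the three-step strategy used for Theorem \ref{mainresult}: concentration, then invertibility, then error decomposition. The concentration step carries over unchanged. The invertibility step has to be replaced by an argument that controls the \emph{range} of $\randgram$ rather than its invertibility.

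\emph{Step 1 (concentration).} Proposition \ref{prob.conv} does not use injectivity of $W$. Applying it with threshold $\lambda_0$, where $\lambda_0$ is now the smallest nonzero eigenvalue of $\Sigma$ and $\|\Sigma^\dagger\|=\lambda_0^{-1}$, gives $\|\randgram-\Sigma\|<\lambda_0$ with probability at least $1-\delta$. This holds as soon as $m\ge\frac{8}{3}R\|\Sigma\|\|\Sigma^\dagger\|^2\log(2n/\delta)$, by the same matrix Bernstein computation (Theorem \ref{bernstein}). We work on this event from now on.

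\emph{Step 2 (range control; the main obstacle).} We must show $\ran(\randgram)=\ran(\Sigma)$, and that $\randgram$ restricted to $\ran(\Sigma)$ is invertible.
\begin{enumerate}[i)]
\item Since $\Sigma=(U\iota_n)^*U\iota_n$, we have $\ker\Sigma=\ker(U\iota_n)$.
\item If $U\iota_n x=0$, then $Q_\Omega U\iota_n x=0$, so $\randgram x=0$. Hence $\ker\Sigma\subseteq\ker\randgram$.
\item Both matrices are self-adjoint, so $\ran(\randgram)\subseteq\ran(\Sigma)$, and $\ran(\Sigma)$ is invariant under $\randgram$.
\item On $\ran(\Sigma)$, the restriction of $\Sigma$ is bounded below by $\lambda_0$. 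A Neumann-series (Weyl) perturbation with $\|\randgram-\Sigma\|<\lambda_0$ shows that the restriction of $\randgram$ is invertible there.
\end{enumerate}
Consequently $\randgram^\dagger\randgram$ is the orthogonal projection onto $\ran(\Sigma)$. The weighted least-squares problem is the linear system \eqref{problem}, so its normal equations are $\randgram x=\iota_n^*U^*Q_\Omega\mathbf y$. Its minimal-norm solution is therefore unique and equals $\tilde x=\randgram^\dagger\iota_n^*U^*Q_\Omega S^*f$, which matches \eqref{tilde.f.frames}.

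\emph{Step 3 (error bound).} Split $f=P_{\mathcal W_n}f+P_{\mathcal W_n^\perp}f$.
\begin{enumerate}[i)]
\item Since $W\iota_n$ maps $\C^n$ onto $\mathcal W_n$, we can write $P_{\mathcal W_n}f=W\iota_n x_0$.
\item By Assumption \ref{assumptions} i), $\mathcal W\cap\mathcal S^\perp=\{0\}$, so $\ker\Sigma=\ker(S^*W\iota_n)=\ker(W\iota_n)$. We may therefore choose $x_0\in\ran(\Sigma)$.
\item Because $P_{\mathcal W_n}f=W\iota_n x_0$ and $U=S^*W$, we get $\iota_n^*U^*Q_\Omega S^*P_{\mathcal W_n}f=\randgram x_0$. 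Step 2 then gives $\randgram^\dagger\randgram x_0=x_0$.
\item The tail contribution is $\iota_n^*U^*Q_\Omega S^*P_{\mathcal W_n^\perp}f=\widehat C_\Omega f$. Together with (iii) this yields $\tilde f=P_{\mathcal W_n}f+W\iota_n\randgram^\dagger\widehat C_\Omega f$.
\end{enumerate}
Hence
\[
f-\tilde f=P_{\mathcal W_n^\perp}f-W\iota_n\randgram^\dagger\widehat C_\Omega P_{\mathcal W_n^\perp}f,
\]
using $\widehat C_\Omega=\widehat C_\Omega P_{\mathcal W_n^\perp}$. The first term lies in $\mathcal W_n^\perp$ and the second in $\mathcal W_n$. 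Pythagoras then gives
\[
\|f-\tilde f\|^2\le\bigl(1+K_{n,\Omega}^2\bigr)\|P_{\mathcal W_n^\perp}f\|^2,
\]
which is the claimed bound.

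The delicate point is Step 2. Closeness in norm alone does not prevent $\randgram$ from acquiring small nonzero eigenvalues on $\ker\Sigma$. The kernel inclusion $\ker\Sigma\subseteq\ker\randgram$, which is structural and holds for every $\Omega$, is what rules this out.
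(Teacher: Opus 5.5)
Your proof is correct and follows essentially the same route as the paper: concentration of $\randgram$ at threshold $\lambda_0$, then range stability $\ran(\randgram)=\ran(\Sigma)$ from the structural inclusion plus a perturbation argument, then the cancellation $\randgram^\dagger\randgram=P_{\ran(\Sigma)}$ and Pythagoras. The differences are minor: you get $\ran(\randgram)\supseteq\ran(\Sigma)$ from a Neumann series on the invariant subspace rather than the rank semicontinuity of Proposition \ref{lsc.rank}, and you pick a preimage $x_0\in\ran(\Sigma)$ via $\ker\Sigma=\ker(W\iota_n)$ rather than the dimension count identifying $\ran(\Sigma)$ with $\ran(\iota_n^*W^*W\iota_n)$.
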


Theorem \ref{main+frames} represents a fully stochastic extension of the deterministic frame-based generalized sampling introduced in \cite{adcock2012frames}. While the deterministic framework yields bounds that are difficult to compute in practice, our stochastic result provides explicit, non-asymptotic bounds on the sample complexity required to achieve stable recovery with arbitrary redundant frames.

We remark that several auxiliary results displayed in Subsections \ref{subsec:crossterm} and \ref{subsec:leverage_score} did not specifically rely on the Riesz basis assumption for the reconstruction system. To be precise, in Corollary \ref{half.ONB.case}, we only require the sampling system to be orthonormal (the reconstruction system can remain an arbitrary frame). Furthermore, the leverage-score distribution bounds in Corollary \ref{leverage} directly extend to the frame setting by simply substituting $\Sigma^{-1}$ with $\Sigma^\dagger$. 

Analogously, the connection to Christoffel sampling at the end of Subsection \ref{subsec:leverage_score} extends to the frame scenario by substituting every occurrence of $\Sigma^{-1}$ with $\Sigma^\dagger$. (Note that in equation \eqref{christoffel_riesz}, the identity $\Sigma\Sigma^\dagger v_j=v_j$ remains valid by virtue of the fact that $v_j\in\ran(\iota_n^*U^*)=\ran(\Sigma)$ in conjunction with $\Sigma\Sigma^\dagger=P_{\ran(\Sigma)}$). We thus deduce the bounds:
\[
\frac{1}{\|\Sigma\|}R\le\kappa_w(\mathcal P)\le\|\Sigma^\dagger\|R.
\]

We also state the analogue of Theorem \ref{K_n,omega} regarding the concentration of the cross-term factor $K_{n,\Omega}$ in the frame setting.

\begin{theorem}\label{K_n,omega+frames}
    Consider the setting of Theorem \ref{main+frames}. Let $\Lambda:=1+\|\Sigma^\dagger\|+\|C\|$ and let $\varepsilon\in(0,\Lambda\sqrt D\min\{1,2\|\Sigma^\dagger\|\})$ and $\delta\in(0,1)$. If
    \[
    m\ge M_{\varepsilon,\delta}^*:=\max\left\{\tilde M_{\frac{\varepsilon}{\Lambda\sqrt D},\frac{\delta}{2}},\tilde M_{\frac{\varepsilon}{4\|\Sigma^\dagger\|^2\Lambda\sqrt D},\frac{\delta}{2}}\right\},
    \]
    then
    \[
    \P\left(\left|K_{n,\Omega}-\|W\iota_n\Sigma^\dagger C\|\right|<\varepsilon\right)\ge1-\delta.
    \]
\end{theorem}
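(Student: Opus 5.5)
The plan follows the proof of Theorem \ref{K_n,omega}, with $\Sigma^{-1}$ replaced by the pseudo-inverse $\Sigma^\dagger$. The starting point is the analogue of \eqref{eq.K_n,omega}: by the reverse triangle inequality and $\|W\|\le\sqrt D$,
\[
\left|K_{n,\Omega}-\|W\iota_n\Sigma^\dagger C\|\right|\le\sqrt D\,\|\randgram^\dagger\widehat C_\Omega-\Sigma^\dagger C\|.
\]
We split the right-hand side as
\[
\randgram^\dagger\widehat C_\Omega-\Sigma^\dagger C=\randgram^\dagger(\widehat C_\Omega-C)+(\randgram^\dagger-\Sigma^\dagger)C.
\]
It then suffices to make each term small on an event of probability at least $1-\delta/2$ and take a union bound. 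Here $\tilde M_{\eta,\delta}$ denotes the sample threshold that guarantees simultaneously $\|\randgram-\Sigma\|<\eta$ and $\|\widehat C_\Omega-C\|<\eta$ with probability at least $1-\delta$. It is obtained from Proposition \ref{prob.conv} for the Gram matrix and from Theorem \ref{op.rect.bernstein} applied to $X_t=\frac1m\bigl(\frac{v_{i_t}\otimes u_{i_t}}{p_{i_t}}-C\bigr)$ for the cross-term. These bounds do not use the Riesz property, so they carry over verbatim, with $L\lesssim R''/m$ and $\sigma^2\lesssim KR''/m$.

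The key step, and the main obstacle, is a perturbation bound for the pseudo-inverse, since $\Sigma\mapsto\Sigma^\dagger$ is not continuous in general. The rank must not change, and the ranges must not rotate. I would exploit the structure: every $v_j$ lies in $\ran(\Sigma)$, as noted in the Christoffel discussion, so
\[
\randgram=\frac1m\sum_t \frac{v_{i_t}\otimes v_{i_t}}{p_{i_t}}
\]
has range contained in $\ran(\Sigma)$. Both matrices therefore live on the same subspace $V=\ran(\Sigma)$. On $V$, $\Sigma$ is invertible with smallest eigenvalue $\lambda_0=\|\Sigma^\dagger\|^{-1}$. If $\|\randgram-\Sigma\|\le\lambda_0/2$, Weyl's inequality shows that $\randgram|_V$ is invertible with $\|\randgram^\dagger\|\le2\|\Sigma^\dagger\|$, and $\randgram^\dagger$ is exactly the inverse of $\randgram|_V$ extended by zero on $V^\perp$. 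Then the resolvent identity on $V$ gives
\[
\randgram^\dagger-\Sigma^\dagger=\randgram^\dagger(\Sigma-\randgram)\Sigma^\dagger,
\]
and hence
\[
\|\randgram^\dagger-\Sigma^\dagger\|\le2\|\Sigma^\dagger\|^2\|\randgram-\Sigma\|.
\]
This mirrors the Riesz case exactly, and it also explains why the theorem has the same form with $\Sigma^\dagger$ in place of $\Sigma^{-1}$.

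Finally, the tolerances are matched to the two thresholds. On the event where $\|\widehat C_\Omega-C\|<\varepsilon/(\Lambda\sqrt D)$, which has probability at least $1-\delta/2$ once $m\ge\tilde M_{\varepsilon/(\Lambda\sqrt D),\delta/2}$, the first term is at most $2\|\Sigma^\dagger\|\varepsilon/(\Lambda\sqrt D)$, up to bookkeeping of the constants. On the event where
\[
\|\randgram-\Sigma\|<\frac{\varepsilon}{4\|\Sigma^\dagger\|^2\Lambda\sqrt D},
\]
which has probability at least $1-\delta/2$ once $m\ge\tilde M_{\varepsilon/(4\|\Sigma^\dagger\|^2\Lambda\sqrt D),\delta/2}$, the second term is at most $\varepsilon\|C\|/(2\Lambda\sqrt D)$. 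The restriction $\varepsilon<\Lambda\sqrt D\min\{1,2\|\Sigma^\dagger\|\}$ ensures that this Gram deviation is below $\lambda_0/2$, so the perturbation lemma above applies. Summing the two terms, using $\Lambda=1+\|\Sigma^\dagger\|+\|C\|$ to absorb the factors $\|\Sigma^\dagger\|$ and $\|C\|$, and multiplying by $\sqrt D$ yields the deviation bound $<\varepsilon$ with probability at least $1-\delta$ by the union bound.
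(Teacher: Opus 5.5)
\paragraph{Gap in the final summation.} Your overall architecture is sound, but the last step does not close at the stated threshold. Set $\eta:=\varepsilon/(\Lambda\sqrt D)$. Your two estimates give
\[
\sqrt D\,\|\randgram^\dagger\widehat C_\Omega-\Sigma^\dagger C\|<\sqrt D\Bigl(2\|\Sigma^\dagger\|\eta+\tfrac12\|C\|\eta\Bigr)=\varepsilon\,\frac{2\|\Sigma^\dagger\|+\tfrac12\|C\|}{1+\|\Sigma^\dagger\|+\|C\|}.
\]
The right-hand side is below $\varepsilon$ only when $\|\Sigma^\dagger\|\le 1+\tfrac12\|C\|$. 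Since $\Lambda$ contains $\|\Sigma^\dagger\|$ with coefficient one, it cannot absorb the factor $2$ that comes from the crude bound $\|\randgram^\dagger\|\le 2\|\Sigma^\dagger\|$. For instance, take $C=0$ (orthonormal sampling with $\mathcal W\subseteq\mathcal S$) and $\|\Sigma^\dagger\|$ large. Then at $m\ge M^*_{\varepsilon,\delta}$ your estimate only certifies a deviation of roughly $2\varepsilon$. So ``up to bookkeeping of the constants'' hides a step that actually fails.

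\paragraph{Repair.} Do not bound $\|\randgram^\dagger\|$ by $2\|\Sigma^\dagger\|$ in the first term. Split it further:
\[
\randgram^\dagger(\widehat C_\Omega-C)=\Sigma^\dagger(\widehat C_\Omega-C)+(\randgram^\dagger-\Sigma^\dagger)(\widehat C_\Omega-C).
\]
This is exactly the three-term decomposition behind Lemma \ref{cont.comp}, which the paper uses. On the Gram event your own perturbation bound gives $\|\randgram^\dagger-\Sigma^\dagger\|<\eta/2<1$. Hence the total is less than $\eta\bigl(\|\Sigma^\dagger\|+\tfrac12+\tfrac12\|C\|\bigr)\le\eta\Lambda$, and multiplying by $\sqrt D$ gives $<\varepsilon$.

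\paragraph{Comparison with the paper's route.} Your treatment of the pseudo-inverse differs from the paper's and is an improvement. The paper proceeds in two steps:
\begin{itemize}
\item it obtains $\ran(\randgram)=\ran(\Sigma)$ from Proposition \ref{lsc.rank} together with the inclusion $\ran(\randgram)\subseteq\ran(\Sigma)$;
\item it then applies the general Lemma \ref{cont.pseudo-inv}, which costs a factor $4\|\Sigma^\dagger\|^2$.
\end{itemize}
You instead use that both matrices are positive semidefinite, map $V=\ran(\Sigma)$ into itself and vanish on $V^\perp$. Weyl's inequality on $V$ then yields, in one stroke, range equality, $\|\randgram^\dagger\|\le2\|\Sigma^\dagger\|$, and the identity $\randgram^\dagger-\Sigma^\dagger=\randgram^\dagger(\Sigma-\randgram)\Sigma^\dagger$. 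This is correct and self-contained, with the better constant $2\|\Sigma^\dagger\|^2$.

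\paragraph{Minor point on the threshold.} In the paper, $\tilde M_{\eta,\delta}$ is the cross-term threshold of Proposition \ref{crossterm}. It dominates the Gram threshold $M_{\eta,\delta}$ of Proposition \ref{prob.conv}, so it controls each event separately at level $\delta$, not both simultaneously. Since you use the two events separately at level $\delta/2$ and then a union bound, this does not affect your argument.
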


\smallskip
\textit{Proof Strategy.}
We now sketch the proofs of both theorems. The core difficulty in the frame setting is that $\Sigma$ is generally singular, meaning we cannot rely on standard matrix inversion. We recover the uniqueness of the reconstruction by computing the minimal norm least-squares solution via the Moore-Penrose pseudo-inverse: $\tilde x:=\randgram^\dagger\iota_n^*U^*Q_\Omega\mathbf y$.

This introduces two major analytical hurdles:
\begin{enumerate}
    \item \textit{Algebraic cancellation.} In the Riesz basis setting, bounding the reconstruction error relies on the clean cancellation $\randgram^{-1}\randgram=I_{\C^n}$. With frames, this product yields an orthogonal projection $\randgram^\dagger\randgram=P_{\ran(\randgram)}$, which severely complicates the algebraic decomposition of the projection error.
    \item \textit{Discontinuity of the pseudo-inverse.} To prove that the factor $K_{n,\Omega}$ concentrates, we require $\randgram^\dagger \xrightarrow{\P} \Sigma^\dagger$. However, the pseudo-inverse mapping $A \mapsto A^\dagger$ is notoriously discontinuous under arbitrary perturbations if the rank of the matrix changes.
\end{enumerate}

The key to overcoming both hurdles lies in establishing the \textit{high-probability eventual-range stability} of the empirical Gram matrix. The pivotal result of this subsection is that, for sufficiently large $m$, the empirical Gram $\randgram$ and its expected value $\Sigma$ share the same range with high probability. 
\begin{prop}\label{rank.stability}
    One has
    \[
    \ran(\Sigma)=\ran(\iota_n^*W^*W\iota_n).
    \]
    Furthermore, for every $\delta\in(0,1)$ and every $m\ge M_\delta$ one has
    \[
    \ran(\randgram)=\ran(\Sigma)=\ran(\iota_n^*W^*W\iota_n)
    \]
    with probability at least $1-\delta$.
\end{prop}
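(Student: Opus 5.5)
The proof splits into two claims: a deterministic one about $\Sigma$, and a probabilistic one about $\randgram$. For the first, recall $\Sigma=\iota_n^*W^*SS^*W\iota_n$. Since both $\Sigma$ and $G:=\iota_n^*W^*W\iota_n$ are positive semidefinite $n\times n$ matrices, it suffices to show $\ker(\Sigma)=\ker(G)$. Take $x\in\C^n$. Then $\<\Sigma x,x\>=\|S^*W\iota_n x\|^2$ and $\<Gx,x\>=\|W\iota_n x\|^2$. If $W\iota_n x=0$, clearly $\Sigma x=0$. Conversely, if $\Sigma x=0$, then $S^*W\iota_n x=0$, so $W\iota_n x\in\mathcal W\cap\ker(S^*)=\mathcal W\cap\mathcal S^\perp=\{0\}$ by Assumption \ref{assumptions} i), and hence $Gx=0$. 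Taking orthogonal complements of the kernels of these self-adjoint matrices gives $\ran(\Sigma)=\ran(G)$.

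For the second claim, the plan is to sandwich $\ran(\randgram)$ between $\ran(\Sigma)$ from both sides.

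\emph{Inclusion $\ran(\randgram)\subseteq\ran(\Sigma)$, almost surely.} Write $\randgram=\frac1m\sum_t p_{i_t}^{-1}v_{i_t}\otimes v_{i_t}$, so that $\ran(\randgram)\subseteq\Span\{v_{i_t}\}$. Each $v_j=\iota_n^*U^*e_j$ lies in $\ran(\iota_n^*U^*)=\ran(\iota_n^*U^*U\iota_n)=\ran(\Sigma)$, using the standard identity $\ran(A^*)=\ran(A^*A)$ for the finite-rank operator $A=U\iota_n$. This inclusion holds deterministically, regardless of which indices were drawn.

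\emph{Reverse inclusion on the good event.} By Proposition \ref{prob.conv} (which, as noted, does not need the Riesz property) with $m\ge M_\delta$, we have $\|\randgram-\Sigma\|<\lambda_0$ with probability at least $1-\delta$, where $\lambda_0$ is now the smallest nonzero eigenvalue. Let $V:=\ran(\Sigma)$. Both $\Sigma$ and, by the previous step, $\randgram$ map $V$ into $V$ and vanish on $V^\perp$. The vanishing of $\randgram$ on $V^\perp$ follows from self-adjointness: $\ker\randgram=\ran(\randgram)^\perp\supseteq V^\perp$. Restricted to $V$, $\Sigma|_V$ is self-adjoint with spectrum in $[\lambda_0,\|\Sigma\|]$, so for $x\in V$,
\[
\<\randgram x,x\>\ge\<\Sigma x,x\>-\|\randgram-\Sigma\|\|x\|^2>0 \quad\text{whenever } x\neq0.
\]
Hence $\randgram|_V$ is injective, and therefore surjective onto $V$ since $V$ is finite-dimensional. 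This gives $V\subseteq\ran(\randgram)$.

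The main point requiring care is that the concentration estimate $\|\randgram-\Sigma\|<\lambda_0$ must be available in the singular case. This rests on the threshold $M_\delta$ being defined through $\|\Sigma^\dagger\|=\lambda_0^{-1}$ and on Proposition \ref{prob.conv} applying verbatim to frames. The range identity for $\iota_n^*U^*$ is a routine finite-dimensional fact.
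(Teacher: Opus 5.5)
Your proof is correct. It follows the same skeleton as the paper's, and two sub-steps are argued differently.

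\emph{Shared skeleton.} First the deterministic range identity from Assumption \ref{assumptions}\,i). Then the almost-sure inclusion $\ran(\randgram)\subseteq\ran(\iota_n^*U^*)=\ran(\Sigma)$. Finally the reverse inclusion on the event $\{\|\randgram-\Sigma\|<\lambda_0\}$ supplied by Proposition \ref{prob.conv}.

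\emph{First identity.} The paper proves $\ran(\Sigma)\subseteq\ran(\iota_n^*W^*W\iota_n)$ and then matches dimensions, using injectivity of $S^*$ on $\mathcal W_n$ and the isomorphisms of Lemma \ref{kerA*A}. You compare kernels directly through the quadratic forms $\|S^*W\iota_n x\|^2$ and $\|W\iota_n x\|^2$. This is shorter and uses Assumption \ref{assumptions}\,i) exactly as stated. The paper's chain passes through $\mathcal S\cap\mathcal S^\perp$, which tacitly invokes $\mathcal W\subseteq\mathcal S$; this is harmless, since $\mathcal W\cap\mathcal S^\perp=\{0\}$ is assumed anyway.

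\emph{Reverse inclusion.} The paper appeals to lower semicontinuity of rank (Proposition \ref{lsc.rank}), which holds for any self-adjoint $A$ and any perturbation $B$. It then combines $\operatorname{rk}(\randgram)\ge\operatorname{rk}(\Sigma)$ with the inclusion. You instead use $\Sigma\succeq 0$ and the already established invariance $\randgram(V)\subseteq V$ to get coercivity of $\randgram$ on $V$ in one line.

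\emph{What each buys.} Your route is more specialized but more elementary. It also gives a quantitative by-product: $\langle\randgram x,x\rangle\ge(\lambda_0-\|\randgram-\Sigma\|)\|x\|^2$ on $V$, hence $\|\randgram^\dagger\|\le(\lambda_0-\|\randgram-\Sigma\|)^{-1}$ on the good event. In particular, $\|\randgram^\dagger\|\le 2\|\Sigma^\dagger\|$ whenever $\|\randgram-\Sigma\|<\lambda_0/2$, which is the bound the paper re-derives inside Lemma \ref{cont.pseudo-inv}. The paper's lemma, being a general perturbation fact, needs neither positivity nor the a priori inclusion, but does not by itself provide this estimate.
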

Once the ranges of the matrices coincide, the pseudo-inversion map becomes continuous when restricted to this subspace (Lemma \ref{cont.pseudo-inv}), allowing us to conclude that $\randgram^\dagger\xrightarrow{\P}\Sigma^\dagger$. Furthermore, the matching ranges allow the cumbersome orthogonal projections in the error decomposition to simplify algebraically, enabling the proof to conclude exactly as it did in the Riesz case. The rigorous event-tracking for these probabilities is detailed in Section~\ref{subsec:proofs_frames}.

\section{Application: reconstruction of analytic functions}\label{sec:analytic.func}

In this section, we consider the classical problem of recovering an analytic function defined on a compact interval from a finite number of its Fourier coefficients. It is natural to reconstruct such a function using a polynomial basis, as the orthogonal projection of an analytic function onto the space of complex-coefficient polynomials of degree at most $n$ converges exponentially fast as $n\to+\infty$ \cite{davis1975interpolation}.

Using our notation, let $\mathcal W=\mathcal S=\mathcal H=L^2([-1,1])$, and let $f\in L^2([-1,1])$ be analytic on $[-1,1]$ (meaning it admits an analytic continuation to an open set containing $[-1,1]$). Since our stochastic framework is independent of the specific choice of bases, we can set the sampling system $\{s_l\}_{l\in\N}$ to be the standard Fourier basis:
\[
s_l(x):=\frac{1}{\sqrt 2}e^{\pi i\sigma(l)x},\quad l\in\N,
\]
where $\sigma\colon\N\rightarrow\Z$ is a bijection. For the reconstruction system, we employ the normalized Legendre polynomials:
\[
w_{k+1}(x):=\frac{\sqrt{k+\frac{1}{2}}}{2^k k!}\frac{d^k}{dx^k}((x^2-1)^k),\quad k\in\N.
\]

Our goal is to answer the following: \emph{If we have access to $m$ Fourier coefficients of $f$, how large must $m$ be to guarantee stable recovery of the first $n$ Legendre coefficients with high probability?}

Previous studies \cite{adcock2015generalized,hrycak2010pseudospectral,demanet2019stable} have shown that deterministic sampling strategies for this Fourier-Legendre pair suffer from severe numerical instability, requiring a stringent quadratic oversampling rate of $m \gtrsim  n^2$ to ensure stable recovery. In contrast, Corollary \ref{cor:onb_nlogn} indicates that by sampling frequencies randomly according to the leverage-score distribution, this barrier can be broken, reducing the requirement to $m \gtrsim n \log(2n/\delta)$.

To apply our theorem, we must explicitly compute this optimal leverage-score distribution $p_l = \|v_l\|^2 / n$. By definition, the vector $v_l \in \C^n$ consists of the inner products between the $l$-th sampling vector and the first $n$ reconstruction vectors. Consequently, its squared norm is precisely the sum of the squared Fourier coefficients of the first $n$ Legendre polynomials: 
\[
\|v_l\|^2 = \sum_{k=1}^n |\widehat{w}_k(\sigma(l))|^2,
\]
where, for $l\in\N$, the $2$-periodic $\sigma(l)$-th Fourier coefficient of $f$ is
\[
\widehat f(\sigma(l))=\<f,s_l\>_{L^2([-1,1])}=\frac{1}{\sqrt 2}\int_{-1}^1 f(x)e^{-\pi i\sigma(l)x}\,dx.
\]
Using the well-known closed-form expression for these coefficients in terms of spherical Bessel functions of the first kind, we can explicitly construct the optimal sampling distribution. This yields the following main result for analytic function recovery.

\begin{theorem}\label{analytic}
    Let $m,n\in\N$ and let $f$ be an analytic function on $[-1,1]$. Let $\{s_k\}_{k\in\N}$ be the Fourier basis and $\{w_k\}_{k\in\N}$ the normalized Legendre polynomial basis. Sample $m$ independent, identically distributed indices $\Omega=\{i_1,\dots,i_m\}\subseteq\N$ according to the probability distribution:
    \[
    p_l=\frac{\|v_l\|^2}{n}=\frac{1}{n}\sum_{k=0}^{n-1}(2k+1)\left|j_k(-\sigma(l)\pi)\right|^2,
    \]
    where $j_k$ is thev $k$-th spherical Bessel function of the first kind.
    For $\delta\in(0,1)$, if $m\ge\frac{8}{3}n\log(2n/\delta)$, then with probability at least $1-\delta$, the least-squares problem
    \[
    \underset{x\in\mathbb{C}^n}{\operatorname{argmin}}\sum_{t=1}^m\frac{1}{p_{i_t}}|\widehat{W\iota_n x}(\sigma(i_t))-\widehat f(\sigma(i_t))|^2
    \]
    admits a unique solution $\tilde x$. Moreover, the reconstruction $\tilde f:=W\iota_n\tilde x$ satisfies:
    \[
    \|f-\tilde f\|_{L^2([-1,1])}\lesssim\sqrt n\rho^{-n}\left(1+K_{n,\Omega}^2\right)^\frac{1}{2},
    \]
    for some $\rho>1$, where the cross-term factor $K_{n,\Omega}:=\|W\iota_n\randgram^{-1}\widehat C_\Omega\|$ converges in probability to $0$ as $m\to+\infty$.
\end{theorem}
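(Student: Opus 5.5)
This is a direct specialization of Corollary \ref{cor:onb_nlogn} and Corollary \ref{half.ONB.case} to the Fourier--Legendre pair. The work splits into three parts: computing the leverage-score distribution explicitly, checking that the structural hypotheses hold, and bounding the best-approximation tail $\|P_{\mathcal W_n^\perp}f\|$ for analytic $f$.

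\emph{Checking the hypotheses.} Both $\{s_l\}_{l\in\N}$ and $\{w_k\}_{k\in\N}$ are orthonormal bases of $L^2([-1,1])$. Hence $\mathcal W=\mathcal S=\mathcal H$, so the subspace condition $\mathcal W\subseteq\mathcal S$ and Assumption \ref{assumptions}(i) hold trivially, and $\Sigma=I_{\C^n}$. For the leverage-score choice $p_l=\|v_l\|^2/n$, Assumption \ref{assumptions}(iii) is automatic and $R=n$. We also need $R'<+\infty$. Since $u_l=P_{\mathcal W_n^\perp}s_l$ has norm at most $1$, this amounts to bounding $1/p_l$ on $\supp(p)$. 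This is the one point that needs care: $p_l$ decays in $|\sigma(l)|$, so $R'$ may be infinite. However, Corollary \ref{cor:onb_nlogn} and the error bound of Theorem \ref{mainresult} use only $R$. So I would invoke $R'$ only for the concentration statement, and for the claim $K_{n,\Omega}\xrightarrow{\P}0$ rely on Corollary \ref{half.ONB.case}, which follows from $C=0$. That fact holds because $U\iota_n$ maps into $S^*\mathcal W_n$ and $S^*$ is unitary, so $C=\iota_n^*W^*P_{\mathcal W_n^\perp}=0$. If the convergence argument really needs $R'<\infty$, I would note that $p_l\asymp\|v_l\|^2$ decays like $|\sigma(l)|^{-2}$, and the same decay holds for $\|u_l\|^2$ up to constants. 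Showing this comparability is the step I expect to be the main obstacle, so I would treat it as a remark.

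\emph{Computing $p_l$.} We have $\|v_l\|^2=\sum_{k=1}^n|\<w_k,s_l\>|^2$. I would use the classical identity
\[
\int_{-1}^1 P_k(x)e^{-i\omega x}\,dx=2(-i)^k j_k(\omega),
\]
with $\omega=\sigma(l)\pi$ and normalization factor $\sqrt{k+1/2}/\sqrt2$. This gives $|\<w_{k+1},s_l\>|^2=(2k+1)|j_k(\sigma(l)\pi)|^2$. Since $|j_k(-x)|=|j_k(x)|$, the stated formula for $p_l$ follows. Corollary \ref{cor:onb_nlogn} then gives unique solvability and the bound $\|f-\tilde f\|\le\|P_{\mathcal W_n^\perp}f\|(1+K_{n,\Omega}^2)^{1/2}$ whenever $m\ge\frac83 n\log(2n/\delta)$.

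\emph{Bounding the tail.} For $f$ analytic inside a Bernstein ellipse $E_\rho$, the Legendre coefficients satisfy $|\<f,w_{k+1}\>|\lesssim\sqrt{k+1/2}\,\rho^{-k}$, by the classical estimate of \cite{davis1975interpolation}, possibly after shrinking $\rho$ slightly. Summing the squares over $k\ge n$ gives
\[
\|P_{\mathcal W_n^\perp}f\|^2\lesssim\sum_{k\ge n}k\rho^{-2k}\lesssim n\rho^{-2n},
\]
that is, $\|P_{\mathcal W_n^\perp}f\|\lesssim\sqrt n\rho^{-n}$. Substituting this into the previous bound finishes the proof.
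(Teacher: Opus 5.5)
Your handling of unique solvability and the error bound via Corollary~\ref{cor:onb_nlogn}, the spherical-Bessel formula for $p_l$, and the Bernstein-ellipse tail estimate is the paper's proof. You are also right that these steps use only $R=n$, which the paper does not point out.

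The gap is the final clause $K_{n,\Omega}\xrightarrow{\P}0$. The identity $C=0$ only identifies the would-be limit. The convergence itself is Theorem~\ref{K_n,omega}, whose threshold $M^*_{\varepsilon,\delta}$ is proportional to $R''$, so $R'<+\infty$ is genuinely needed. Your fallback is false. With both systems orthonormal and $\mathcal W=\mathcal S$, $\|u_l\|^2=1-\|P_{\mathcal W_n}s_l\|^2=1-\|v_l\|^2\to1$, so $R'=\sup_l n(1-\|v_l\|^2)/\|v_l\|^2=+\infty$. For $n\ge2$ every $v_l$ is nonzero, so Assumption~\ref{assumptions}(ii) in fact fails for every admissible $p$.

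This cannot be repaired, because for $n\ge2$ the clause is false. Let $i^*$ be a drawn index of maximal $|\sigma(i_t)|$ and test $\widehat C_\Omega$ on $s_{i^*}$. Using $\<s_{i^*},u_{i_t}\>=\mathbf 1_{\{i_t=i^*\}}-\<P_{\mathcal W_n}s_{i^*},P_{\mathcal W_n}s_{i_t}\>$ and $\|v_l\|^2/p_l=n$, you get $\|\widehat C_\Omega\|\ge n/(m\|v_{i^*}\|)-n\|v_{i^*}\|$. For $\sigma(l)\neq0$ one has $3/(\pi\sigma(l))^2=3|j_1(\pi\sigma(l))|^2\le\|v_l\|^2\le c_n/\sigma(l)^2$, the upper bound by one integration by parts. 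Hence $\P(|\sigma(i_1)|\ge m)\ge 6/(n\pi^2m)$. So with probability at least $1-e^{-6/(n\pi^2)}$, uniformly in $m$, some draw has $|\sigma(i_t)|\ge m$. On that event $\|\widehat C_\Omega\|\ge n/\sqrt{c_n}-o(1)$. Since $W\iota_n$ is an isometry and $\randgram\to I_{\C^n}$ in probability, $K_{n,\Omega}\ge\|\widehat C_\Omega\|/\|\randgram\|$ stays bounded away from $0$ with non-vanishing probability.

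The paper's proof does not treat this clause either. It inherits it from Corollary~\ref{half.ONB.case} without checking $R'<+\infty$, so it has the same hole. What does hold, along your lines, is the fixed-$f$ statement. The quantity $|\<f,u_l\>|\,\|v_l\|/p_l=n|\<P_{\mathcal W_n^\perp}f,s_l\>|/\|v_l\|$ is bounded in $l$, because the Fourier coefficients of the smooth tail are $O(|\sigma(l)|^{-1})$. Hence $\E\|\widehat C_\Omega f\|^2=O(1/m)$ and $\|f-\tilde f\|\xrightarrow{\P}\|P_{\mathcal W_n^\perp}f\|$.
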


\begin{remark}[Overcoming the deterministic barrier]
    By employing the optimal stochastic sampling rate $m\asymp n\log(n/\delta)$, we can select a reconstruction dimension $n\asymp\frac{m}{\log(m/\delta)}$. Because the cross-term vanishes ($K_{n,\Omega} \xrightarrow{\P} 0$), the asymptotic bound on the reconstruction error becomes:
    \[
    \|f-\tilde f\|_{L^2([-1,1])}\le C\sqrt m\,\rho^{-\frac{m}{\log(m/\delta)}}.
    \]
    This represents a drastic and fundamental improvement over the deterministic setting. In classical deterministic Generalized Sampling, the severe algebraic mismatch between the Fourier and Legendre bases yields an exponentially ill-conditioned system, forcing the quadratic sample complexity $m \gtrsim n^2$. Under that deterministic restriction, the maximum reconstructable polynomial degree is  $n \sim \sqrt{m}$, which limits the convergence to a \textit{root-exponential} rate of $\mathcal{O}(\rho^{-\sqrt{m}})$. 
    
    In stark contrast, our leverage-score sampling automatically selects the most informative frequencies. By allowing a near-linear reconstruction degree $n \sim m/\log m$, the stochastic framework yields \textit{near-exponential} rate of $\mathcal{O}(\rho^{-m/\log m})$. This rigorously demonstrates that geometry-aware randomization not only stabilizes the inversion process, but also improves the convergence rates of infinite-dimensional inverse problems.
\end{remark}

\section{Proofs}\label{sec:proofs}

\subsection{Concentration Inequalities}\label{subsec:proofs_concentration}

The driving force behind our main results is the matrix Bernstein inequality, which allows us to bound the probability that the norm of a sum of independent, zero-mean random matrices exceeds a given tolerance by the dimension of the matrices times an exponentially decaying factor. This is the key in obtaining the desired sampling rate of the form $m\gtrsim n\log(2n/\delta)$.
\begin{theorem}[Bernstein inequality {\cite[Theorem 1.4]{tropp2012user}}]\label{bernstein}
    Let $X_1,\dots,X_m$ be independent, random, self-adjoint matrices of dimension $d$ such that $\E(X_t)=0$ for every $t\in\{1,\dots,m\}$. Assume that there exists $L\ge0$ such that for every $t\in\{1,\dots,m\}$ one has $\|X_t\|\le L$ almost surely.
    Then for every $\varepsilon>0$ one has
    \[
    \P\left(\left\|\sum_{t=1}^m X_t\right\|\ge\varepsilon\right)\le2d\cdot\exp\left(\frac{-\varepsilon^2/2}{V+L\varepsilon/3}\right),
    \]
    where $V:=\|\sum_{t=1}^m\E[X_t^2]\|$.
\end{theorem}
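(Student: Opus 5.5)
The plan is the matrix Laplace transform method of Ahlswede--Winter as refined by Tropp, applied separately to the largest eigenvalue of $Y:=\sum_{t=1}^m X_t$ and of $-Y$. Since $\|Y\|=\max\{\lambda_{\max}(Y),\lambda_{\max}(-Y)\}$ and the family $(-X_t)$ satisfies the same hypotheses with the same $L$ and $V$, a union bound reduces everything to proving
\[
\P(\lambda_{\max}(Y)\ge\varepsilon)\le d\exp\left(\frac{-\varepsilon^2/2}{V+L\varepsilon/3}\right).
\]
This accounts for the factor $2d$.

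First, for every $\theta>0$ I will use Markov's inequality together with the spectral mapping $e^{\theta\lambda_{\max}(Y)}=\lambda_{\max}(e^{\theta Y})\le\tr e^{\theta Y}$. This gives
\[
\P(\lambda_{\max}(Y)\ge\varepsilon)\le e^{-\theta\varepsilon}\,\E\tr e^{\theta Y}.
\]

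Second, and this is the main obstacle, I need to decouple the trace exponential of a sum of non-commuting independent matrices. The tool I will use is Lieb's concavity theorem: for fixed self-adjoint $H$, the map $A\mapsto\tr\exp(H+\log A)$ is concave on positive definite matrices. Conditioning on $X_1,\dots,X_{m-1}$ and applying Jensen's inequality to the last summand, then iterating, yields the subadditivity of matrix cumulant generating functions:
\[
\E\tr e^{\theta Y}\le\tr\exp\Big(\sum_{t=1}^m\log\E e^{\theta X_t}\Big).
\]
The Golden--Thompson inequality only handles two factors, which is why Lieb's theorem is needed here.

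Third, I will bound each matrix moment generating function with the scalar Bernstein trick. The function $g(x)=(e^{x}-1-x)/x^2$ is increasing, so, using the bound $\|X_t\|\le L$ and transferring the inequality through the functional calculus, $e^{\theta X_t}\preceq I+\theta X_t+g(\theta L)\theta^2X_t^2$. Taking expectations and using $\E X_t=0$, I get
\[
\E e^{\theta X_t}\preceq I+g(\theta L)\theta^2\E X_t^2\preceq\exp\big(g(\theta L)\theta^2\E X_t^2\big).
\]
Operator monotonicity of the logarithm then gives $\log\E e^{\theta X_t}\preceq g(\theta L)\theta^2\E X_t^2$. Since $\tr\exp$ is monotone with respect to $\preceq$, it follows that
\[
\E\tr e^{\theta Y}\le\tr\exp\Big(g(\theta L)\theta^2\sum_t\E X_t^2\Big)\le d\,e^{g(\theta L)\theta^2V}.
\]

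Finally, for $0<\theta<3/L$ I will use the estimate $g(\theta L)\le\frac{1}{2(1-\theta L/3)}$, which comes from comparing Taylor coefficients ($k!\ge2\cdot3^{k-2}$). Choosing $\theta=\varepsilon/(V+L\varepsilon/3)$ optimizes the exponent up to constants and produces exactly $\exp\left(\frac{-\varepsilon^2/2}{V+L\varepsilon/3}\right)$. Combining this with the union bound from the first paragraph completes the proof.
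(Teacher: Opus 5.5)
Your proposal is correct and is essentially the standard proof: the paper gives no argument of its own, only a citation to Tropp's matrix Bernstein inequality, and Tropp proves it by exactly your route (Laplace transform, Lieb-concavity subadditivity of matrix cumulants, Bernstein mgf bound via $g(x)=(e^x-1-x)/x^2$, optimizing $\theta$, and a union bound over $\pm Y$ for the factor $2d$). One minor point: your choice $\theta=\varepsilon/(V+L\varepsilon/3)$ satisfies $\theta L<3$ only when $V>0$, so treat $V=0$ separately; that case is trivial because $\E X_t^2=0$ forces $X_t=0$ almost surely.
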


We need an infinite-dimensional version of the Bernstein inequality since, in Subsection~\ref{subsec:crossterm}, we wish to apply to the empirical cross-term $\widehat C_\Omega$ the same reasoning we used for the empirical Gram $\randgram$. The first issue we stumble upon is that Theorem \ref{bernstein} makes use of the dimension $d$ of the matrices in play. Fortunately, there exists a dimension-free version of the Bernstein inequality that employs the effective rank of a matrix, defined as 
\[
r(A):=\frac{\tr(A)}{\|A\|}.
\]
Since the definition of effective rank can be extended to trace-class operators, this version has been generalized to random operators acting on a Hilbert space.
\begin{theorem}[Bernstein inequality for random, self-adjoint, Hilbert-Schmidt operators {\cite[Section 3.2]{minsker2017some}}]\label{op.bernstein}
    Let $\mathcal H$ be a separable Hilbert space and let $X_1,\dots,X_m$ be independent, random, self-adjoint, Hilbert-Schmidt operators on $\mathcal H$ such that $\E(X_t)=0$ for every $t\in\{1,\dots,m\}$. Assume that there exists $L,\sigma^2\ge0$ such that for every $t\in\{1,\dots,m\}$ one has:
    \begin{align*}
    &\|X_t\|\le L\quad\text{almost surely},\\
    &\left\|\sum_{t=1}^m\E(X_t^2)\right\|\le\sigma^2.
    \end{align*}
    Then for every $\varepsilon\ge\frac{1}{6}(L+\sqrt{L^2+36\sigma^2})$ one has
    \[
    \P\left(\left\|\sum_{t=1}^m X_t\right\|\ge\varepsilon\right)\le 14\cdot r\!\left(\sum_{t=1}^m\E(X_t^2)\right)\exp\left(\frac{-\varepsilon^2/2}{\sigma^2+L\varepsilon/3}\right).
    \]
\end{theorem}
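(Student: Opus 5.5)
This statement is quoted from \cite[Section 3.2]{minsker2017some} and will be used as a black box. If one wants to reprove it, I would follow the effective-rank refinement of the Lieb/Tropp argument. First reduce to finite dimensions. Take an increasing sequence of finite-rank orthogonal projections $P_N\to I$ strongly and apply the finite-dimensional result to the compressions $P_NX_tP_N$. These compressions are still independent, centered and self-adjoint, with $\|P_NX_tP_N\|\le L$ and $\|\sum_t\E((P_NX_tP_N)^2)\|\le\sigma^2$, since $\E((PXP)^2)=\E(PXPXP)\preceq P\,\E(X^2)P$. Their effective rank is controlled by that of $V:=\sum_t\E(X_t^2)$, because $\tr(P\,\E(X^2)P)\le\tr(V)$. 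To handle the change of denominator $\|V_N\|$ versus $\|V\|$, I would apply the finite-dimensional bound with variance parameter $\|V\|$ directly. Then I pass to the limit: $\|\sum_tP_NX_tP_N\|\to\|\sum_tX_t\|$ almost surely, using the Hilbert-Schmidt assumption, and Fatou/portmanteau yields the infinite-dimensional bound.

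In finite dimensions, set $Y=\sum_tX_t$ and treat $\lambda_{\max}(Y)$ and $\lambda_{\max}(-Y)$ separately; this accounts for a factor $2$ in the constant $14=2\cdot7$. Instead of $e^{\theta x}$, use $\varphi(x)=e^{x}-x-1$, which is nonnegative, vanishes at $0$ and is increasing on $[0,\infty)$. Markov's inequality gives
\[
\P(\lambda_{\max}(Y)\ge\varepsilon)\le\frac{\E\tr\varphi(\theta Y)}{\varphi(\theta\varepsilon)},
\]
and since $\E Y=0$ we have $\E\tr\varphi(\theta Y)=\E\tr(e^{\theta Y}-I)$. Lieb's concavity theorem, together with the usual bound $\log\E e^{\theta X_t}\preceq g(\theta)\E(X_t^2)$ with $g(\theta)=\frac{\theta^2/2}{1-L\theta/3}$, gives
\[
\E\tr(e^{\theta Y}-I)\le\tr\bigl(e^{g(\theta)V}-I\bigr).
\]
The key dimension-free step is that $x\mapsto e^x-1$ is convex and vanishes at $0$. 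Hence, for $0\preceq A$, we have $e^{a}-1\le\frac{a}{\|A\|}(e^{\|A\|}-1)$ eigenvalue-wise, and therefore
\[
\tr(e^{A}-I)\le r(A)\,(e^{\|A\|}-1).
\]
This replaces the dimension $d$ by $r(V)$ and yields the bound $r(V)\,e^{g(\theta)\sigma^2}/\varphi(\theta\varepsilon)$.

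Finally, choose $\theta=\varepsilon/(\sigma^2+L\varepsilon/3)$, so that $g(\theta)\sigma^2-\theta\varepsilon\le-\frac{\varepsilon^2/2}{\sigma^2+L\varepsilon/3}$. It then remains to bound $e^{\theta\varepsilon}/\varphi(\theta\varepsilon)$ by an absolute constant, and this is where the threshold on $\varepsilon$ enters. The condition $\varepsilon\ge\frac16(L+\sqrt{L^2+36\sigma^2})$ is exactly the larger root of $3\varepsilon^2-L\varepsilon-3\sigma^2=0$. It is therefore equivalent to $\theta\varepsilon=\varepsilon^2/(\sigma^2+L\varepsilon/3)\ge1$, and on $[1,\infty)$ one checks $e^{x}/(e^{x}-x-1)\le e/(e-2)<7$. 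I expect this constant-tracking and the rigorous finite-rank approximation, notably keeping $\|V\|$ rather than $\|V_N\|$ in the effective rank, to be the only delicate points. The Lieb step and the moment-generating-function bound are standard and taken from \cite{tropp2012user}.
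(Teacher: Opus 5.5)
Your proposal agrees with the paper, which also imports this inequality from Minsker's Section~3.2 as a black box and gives no proof of its own. Your optional reconstruction is sound as well: the Markov step with $e^{x}-x-1$, the chord bound $\tr(e^{gA}-I)\le\frac{\tr A}{M}(e^{gM}-1)$ for $0\preceq A\preceq MI$ applied with $A=V_N$ and $M=\|V\|\ge\|V_N\|$ (which is what repairs the effective-rank denominator after compression), the identification of the threshold with $\theta\varepsilon\ge1$, and the resulting constant $2e/(e-2)<14$ all check out; the one cosmetic point is that upgrading $\{\|Y\|>\varepsilon\}$ to $\{\|Y\|\ge\varepsilon\}$ at the exact threshold in the Fatou step relies on the slack in that constant.
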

At the cost of a worse constant and of a constraint on $\varepsilon$, this extension of the Bernstein inequality solves the dimensionality issue. However, the cross-terms $\widehat C_\Omega,C$ do not act on a single space, but on two distinct Hilbert spaces:
\[
\widehat C_\Omega,C\colon\mathcal H\longrightarrow\C^n.
\]
Our goal is now to further extend Theorem \ref{op.bernstein} to random operators mapping between possibly distinct Hilbert spaces $\mathcal H_1\longrightarrow\mathcal H_2$. This is done by means of the so-called Hermitian dilation, a technique that is employed to extend result from square self-adjoint matrices to rectangular matrices (see \cite[Theorem 1.6]{tropp2012user}).
\begin{definition}\label{def:Herm}
    Let $T\colon\mathcal H_1\longrightarrow\mathcal H_2$ be a bounded operator between Hilbert spaces. The Hermitian dilation of $T$ is
    \begin{align*}
        H(T)\colon\mathcal H_1\oplus\mathcal H_2&\longrightarrow\mathcal H_1\oplus\mathcal H_2,\\
        (x,y)&\longmapsto(T^*y,Tx).
    \end{align*}
\end{definition}
We have now at our disposal all the ingredients to prove the extension of the Bernstein inequality that we need.
\begin{proof}[Proof of Theorem \ref{op.rect.bernstein}]
    We wish to apply Theorem \ref{op.bernstein} to the dilations $H(X_t)$. We have that $H(X_1),\dots,H(X_m)$ are independent, random, self-adjoint operators on $\mathcal H_1\oplus\mathcal H_2$ (where self-adjointness follows from Proposition \ref{hermitian.dilation} (2)) satisfying $\E(H(X_t))=H(\E(X_t))=0$ for every $t\in\{1,\dots,m\}$, where $H$ commutes with $\E$ by continuity (indeed, $H$ is isometric from Proposition \ref{hermitian.dilation} (1)) and $\R$-linearity (Proposition \ref{hermitian.dilation} (3)). We must check that each $H(X_t)$ is a Hilbert-Schmidt operator: this is equivalent to requiring $H(X_t)^*H(X_t)=H(X_t)^2$ to be trace-class. A simple calculation shows that $H(X_t)^2(x,y)=(X_t^*X_tx,X_tX_t^*y)$ for every $(x,y)\in\mathcal H_1\oplus\mathcal H_2$: it follows that 
    \[
    \tr(H(X_t)^2)=\tr(X_t^*X_t)+\tr(X_tX_t^*),
    \]
    and this is finite by hypothesis. We also deduce that $\sum_{t=1}^m\E(H(X_t)^2)=(V_1,V_2)$, and it's easy to see that
    \begin{align*}
        &\tr((V_1,V_2))=\tr(V_1)+\tr(V_2),\\
        &\|(V_1,V_2)\|=\max\{\|V_1\|,\|V_2\|\}\le\sigma^2.
    \end{align*}
    This allows us to write the effective rank of $\sum_{t=1}^m\E(H(X_t)^2)$ in the following way:
    \begin{align*}
    r\left(\sum_{t=1}^m\E(H(X_t)^2)\right)&=\frac{\tr(V_1)+\tr(V_2)}{\max\{\|V_1\|,\|V_2\|\}}.
    \end{align*}
    Let $\varepsilon\ge\frac{1}{6}(L+\sqrt{L^2+36\sigma^2})$. Since $\|H(X_t)\|=\|X_t\|\le L$ for each $t$, we can apply Theorem \ref{op.bernstein} to infer
    \begin{align*}
        \P\left(\left\|\sum_{t=1}^m H(X_t)\right\|\ge\varepsilon\right)&\le 14\cdot r\!\left(\sum_{t=1}^m\E(H(X_t)^2)\right)\exp\left(\frac{-\varepsilon^2/2}{\sigma^2+L\varepsilon/3}\right)\\
        &=14\frac{\tr(V_1)+\tr(V_2)}{\max\{\|V_1\|,\|V_2\|\}}\exp\left(\frac{-\varepsilon^2/2}{\sigma^2+L\varepsilon/3}\right).
    \end{align*}
    And the claim follows because $\|\sum_{t=1}^m H(X_t)\|=\|H(\sum_{t=1}^m X_t)\|=\|\sum_{t=1}^m X_t\|$, where we used the $\R$-linearity of $H$ (Proposition \ref{hermitian.dilation} (3)).
\end{proof}

\subsection{Proof of the Main Result}

Recall that, for this subsection, we are assuming the sampling system to be a frame and the reconstruction system to be a Riesz basis. Under these assumption, the matrix $\Sigma$ is invertible.
\begin{prop}\label{sigma.invertible}
    $\Sigma=\iota_n^*U^*U\iota_n$ is invertible.
\end{prop}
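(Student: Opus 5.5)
Since $\Sigma=\iota_n^*U^*U\iota_n$ is an $n\times n$ positive semidefinite matrix, it suffices to show it is injective, that is, strictly positive definite. For $x\in\C^n$ we compute
\[
\<\Sigma x,x\>=\|U\iota_n x\|^2=\|S^*W\iota_n x\|^2,
\]
so $\Sigma x=0$ forces $S^*(W\iota_n x)=0$. The plan is to show that this forces $x=0$ using Assumption~\ref{assumptions} i) and the Riesz basis property of $\{w_k\}_{k\in\N}$.

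The key steps are as follows. First, $S^*h=(\<h,s_k\>)_{k\in\N}=0$ means exactly that $h$ is orthogonal to every $s_k$, hence to their closed span, so $h\in\mathcal S^\perp$. Second, applying this to $h=W\iota_n x$, which lies in $\mathcal W_n\subseteq\mathcal W$, gives $h\in\mathcal W\cap\mathcal S^\perp=\{0\}$ by Assumption~\ref{assumptions} i). Third, since $W$ is injective (lower Riesz bound $C\|y\|^2\le\|Wy\|^2$) and $\iota_n$ is injective, $W\iota_n x=0$ implies $x=0$. Hence $\ker\Sigma=\{0\}$, and in finite dimensions $\Sigma$ is invertible with $\lambda_0>0$.

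There is no real obstacle here; the only point requiring care is the first step, namely that the kernel of the analysis operator $S^*$ is precisely $\mathcal S^\perp$. This holds regardless of whether $\{s_k\}_{k\in\N}$ is a frame for all of $\mathcal H$ or only for $\mathcal S$. The frame bounds on $\mathcal S$ are not needed for this qualitative statement, only for quantitative bounds such as Corollary~\ref{cor:Rlogn}.
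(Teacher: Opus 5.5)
Your proof is correct and takes essentially the same route as the paper. Both reduce $\ker\Sigma$ to $\ker(U\iota_n)$: you use $\langle\Sigma x,x\rangle=\|U\iota_n x\|^2$, while the paper cites Lemma~\ref{kerA*A}. Both then conclude from $\mathcal W\cap\mathcal S^\perp=\{0\}$ and the injectivity of $W$. The only cosmetic differences are that you read off $\ker S^*=\mathcal S^\perp$ directly from orthogonality to each $s_k$, whereas the paper writes elements of $\mathcal S$ as $Sy$ using $\mathcal S=S(\ell^2)$, and that the paper proves injectivity of $U$ on all of $\ell^2$ rather than only on $\iota_n(\C^n)$.
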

\begin{proof}
    We will start by showing that $U=S^* W$ is injective: let $x\in \ker(U)$. We can check that $Wx\in\mathcal S^\perp$: let $s\in\mathcal S$. Since $\mathcal S=S(\ell^2)$, let $y\in\ell^2$ such that $s=Sy$. Then
    \[
    \<Wx,s\>=\<Wx,Sy\>=\<S^*Wx,y\>_{\ell^2}=\<Ux,y\>_{\ell^2}=0.
    \]
    Clearly, we also have $Wx\in\mathcal W$. Since we are assuming $\mathcal W\cap\mathcal S^\perp=\{0\}$ (see Assumptions \ref{assumptions}), we conclude that $Wx=0$. But then $x=0$ by injectivity of the synthesis operator $W$. Now, since $\Sigma$ is an $n\times n$ matrix, invertibility is equivalent to injectivity. Furthermore, since $\Sigma=\iota_n^*U^*U\iota_n=(U\iota_n)^*U\iota_n$, Lemma \ref{kerA*A} ensures that $\ker(\Sigma)=\ker(U\iota_n)$, and the latter is $\{0\}$ by injectivity of $U$ and $\iota_n$.
\end{proof}
In order to employ the matrix Bernstein inequality (Theorem \ref{bernstein}), we need to express the difference $\randgram-\Sigma$ as the sum of independent, centered, self-adjoint matrices. This is the essence of the next lemma. Recall that $v_j:=\iota_n^*U^*e_j\in\C^n$.
\begin{lemma}\label{repr.sigma}
    One has:
    \[
    \Sigma=\sum_{j\in\N}v_j\otimes v_j\qquad\text{and}\qquad\randgram=\frac{1}{m}\sum_{t=1}^m\frac{v_{i_t}\otimes v_{i_t}}{p_{i_t}},
    \]
    where the infinite series converges in the operator norm topology. In particular, in accordance to our convention \eqref{supp(p)}, we have
    \[
    \Sigma=\sum_{j\in\supp(p)}v_j\otimes v_j.
    \]
\end{lemma}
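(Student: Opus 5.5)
The plan is to use the resolution of the identity on $\ell^2$ given by the canonical basis, $I_{\ell^2}=\sum_{j\in\N}e_j\otimes e_j$, which converges in the strong operator topology. With the convention $(a\otimes b)x=\<x,b\>a$, any bounded operators $A\colon\ell^2\to\C^n$ and $B\colon\ell^2\to\C^n$ satisfy $A(e_j\otimes e_j)B^*=(Ae_j)\otimes(Be_j)$.

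Setting $A=B=\iota_n^*U^*$, we have $\Sigma=\iota_n^*U^*\,I_{\ell^2}\,U\iota_n$. Inserting the resolution of the identity gives, for every $x\in\C^n$,
\[
\Sigma x=\sum_{j\in\N}\<U\iota_n x,e_j\>\,\iota_n^*U^*e_j=\sum_{j\in\N}\<x,v_j\>v_j .
\]
Here we used $\<U\iota_nx,e_j\>=\<x,\iota_n^*U^*e_j\>=\<x,v_j\>$, and the series converges because $\iota_n^*U^*$ is bounded and $\sum_j\<y,e_j\>e_j\to y$ in $\ell^2$. This already yields the identity $\Sigma=\sum_j v_j\otimes v_j$ in the strong (pointwise) sense.

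To upgrade to convergence in operator norm, I will use finite dimensionality together with positivity. Each $v_j\otimes v_j$ is positive semidefinite, so the partial sums $\Sigma_N=\sum_{j\le N}v_j\otimes v_j$ increase, and $\Sigma-\Sigma_N\ge0$. Hence
\[
\|\Sigma-\Sigma_N\|\le\tr(\Sigma-\Sigma_N)=\sum_{j>N}\|v_j\|^2 .
\]
The right-hand side is the tail of a convergent series, because
\[
\sum_j\|v_j\|^2=\sum_{k=1}^n\sum_j|\<U\iota_n\mathbf e_k,e_j\>|^2=\sum_{k=1}^n\|U\iota_n\mathbf e_k\|^2<\infty
\]
by Parseval. (Alternatively, on $\C^n$ strong and norm convergence coincide, since one can test on the $n$ basis vectors.) This step, making the infinite series rigorous, is the only real obstacle, and it is mild. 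The trace identity $\tr(\Sigma)=\sum_j\|v_j\|^2$ used in Proposition~\ref{trace} also falls out of this computation.

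For the empirical Gram matrix no limit is involved. Since $Q_\Omega=\frac1m\sum_t p_{i_t}^{-1}e_{i_t}\otimes e_{i_t}$ is a finite sum, and $p_{i_t}>0$ almost surely,
\[
\randgram=\iota_n^*U^*Q_\Omega U\iota_n=\frac1m\sum_{t=1}^m\frac{(\iota_n^*U^*e_{i_t})\otimes(\iota_n^*U^*e_{i_t})}{p_{i_t}}=\frac1m\sum_{t=1}^m\frac{v_{i_t}\otimes v_{i_t}}{p_{i_t}},
\]
by linearity and the identity $A(e\otimes e)A^*=Ae\otimes Ae$.

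Finally, the restriction to $\supp(p)$ follows from Assumption~\ref{assumptions}\,iii). For $j\notin\supp(p)$ we have $v_j=0$, so those terms vanish and can be dropped; the remaining subseries still converges in norm by the same tail estimate.
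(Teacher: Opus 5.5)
Your proof is correct and follows essentially the same route as the paper. You get strong convergence of $\sum_j v_j\otimes v_j$ to $\Sigma$ from the resolution of the identity $\sum_j e_j\otimes e_j=I_{\ell^2}$ and the rank-one identity $A(e_j\otimes e_j)A^*=(Ae_j)\otimes(Ae_j)$, and you treat $\randgram$ as a finite sum; the only difference is the upgrade to norm convergence. There the paper simply invokes the coincidence of strong and norm topologies on $n\times n$ matrices (your parenthetical alternative), whereas your bound $\|\Sigma-\Sigma_N\|\le\sum_{j>N}\|v_j\|^2$ is quantitative and also yields $\tr(\Sigma)=\sum_j\|v_j\|^2$ (Proposition~\ref{trace}) as a by-product.
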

\begin{proof}
    Use the properties of rank-1 operators to quickly deduce
    \begin{align*}
        \randgram&=\iota_n^*U^*Q_{\Omega}U\iota_n\\
        &=\frac{1}{m}\sum_{t=1}^m\iota_n^*U^*\left(\frac{e_{i_t}\otimes e_{i_t}}{p_{i_t}}\right)U\iota_n\\
        &=\frac{1}{m}\sum_{t=1}^m\frac{(\iota_n^*U^*e_{i_t})\otimes((U\iota_n)^*e_{i_t})}{p_{i_t}}\\
        &=\frac{1}{m}\sum_{t=1}^m\frac{v_{i_t}\otimes v_{i_t}}{p_{i_t}}.
    \end{align*}
    We can apply the same reasoning to $\Sigma$. However, the series $\sum_{j\in\N}e_{j}\otimes e_j$ converges to the identity only in the strong sense, but not in norm. It follows that the series $\sum_{j\in\N}\iota_n^*U^*(e_j\otimes e_j)U\iota_n=\sum_{j\in\N}v_j\otimes v_j$ converges strongly to $\iota_n^*U^*IU\iota_n=\Sigma$. But strong operator topology and norm operator topology coincide in the space of operators that act on a finite dimension space (i.e. the space of matrices), and the claim is proved.
\end{proof}
We arrived at the heart of the proof of Theorem \ref{mainresult}: we apply the Bernstein inequality and carry on the computations to explicitly derive the stable sampling threshold $M_{\varepsilon,\delta}$.
\begin{prop}\label{prob.conv}
     For every $\varepsilon\in(0,\|\Sigma\|],\delta\in(0,1)$ there exists $M_{\varepsilon,\delta}>0$ such that for every natural $m\ge M_{\varepsilon,\delta}$ one has
     \[
     \P(\|\randgram-\Sigma\|\ge\varepsilon)\le\delta.
     \]
\end{prop}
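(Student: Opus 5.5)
We write $\randgram-\Sigma$ as a sum of independent, centered, self-adjoint $n\times n$ matrices and apply the matrix Bernstein inequality (Theorem \ref{bernstein}) with $d=n$. By Lemma \ref{repr.sigma},
\[
\randgram-\Sigma=\sum_{t=1}^m X_t,\qquad X_t:=\frac{1}{m}\left(\frac{v_{i_t}\otimes v_{i_t}}{p_{i_t}}-\Sigma\right).
\]
The $X_t$ are independent because the $i_t$ are i.i.d., and each is self-adjoint. They are centered because
\[
\E\left[\frac{v_{i_t}\otimes v_{i_t}}{p_{i_t}}\right]=\sum_{j\in\supp(p)}p_j\,\frac{v_j\otimes v_j}{p_j}=\Sigma.
\]
This uses the support condition \eqref{supp(p)} together with the last identity of Lemma \ref{repr.sigma}. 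Almost surely $p_{i_t}>0$, so $X_t$ is well defined.

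\textbf{Uniform bound.} Both $\frac{v_{i_t}\otimes v_{i_t}}{p_{i_t}}$ and $\Sigma$ are positive semidefinite. For positive semidefinite $A,B$ we have $\|A-B\|\le\max\{\|A\|,\|B\|\}$. Moreover $\|v_j\otimes v_j\|/p_j=\|v_j\|^2/p_j\le R$, and $\|\Sigma\|\le\tr(\Sigma)\le R$ by the trace bound (Proposition \ref{trace}). Hence $\|X_t\|\le L:=R/m$. If the inequality $\|\Sigma\|\le R$ were not available, the cruder choice $L=(R+\|\Sigma\|)/m$ would also suffice.

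\textbf{Variance bound.} Since the $X_t$ are centered,
\[
\E[X_t^2]=\frac{1}{m^2}\left(\E\left[\frac{\|v_{i_t}\|^2}{p_{i_t}^2}\,v_{i_t}\otimes v_{i_t}\right]-\Sigma^2\right)\preceq\frac{1}{m^2}\sum_j\frac{\|v_j\|^2}{p_j}\,v_j\otimes v_j\preceq\frac{R}{m^2}\,\Sigma.
\]
Summing over $t$ gives $V\le R\|\Sigma\|/m$.

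\textbf{Applying Bernstein.} Theorem \ref{bernstein} yields
\[
\P(\|\randgram-\Sigma\|\ge\varepsilon)\le 2n\exp\left(-\frac{m\varepsilon^2/2}{R\|\Sigma\|+R\varepsilon/3}\right).
\]
Since $\varepsilon\le\|\Sigma\|$, the denominator is at most $\frac43 R\|\Sigma\|$. The right-hand side is therefore at most $2n\exp\bigl(-\frac{3m\varepsilon^2}{8R\|\Sigma\|}\bigr)$. This is $\le\delta$ as soon as
\[
m\ge M_{\varepsilon,\delta}:=\frac{8}{3}\,\frac{R\|\Sigma\|}{\varepsilon^2}\,\log(2n/\delta).
\]
Choosing $\varepsilon=\lambda_0=\|\Sigma^{-1}\|^{-1}$ (which is allowed, since $\lambda_0\le\|\Sigma\|$) recovers exactly the threshold in Theorem \ref{mainresult}.

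The only delicate step is the uniform bound $L$. The constant $\frac83$ depends on getting $L\le R/m$ rather than $2R/m$, so I would take care to use the positive semidefinite norm-difference inequality together with the trace bound $\|\Sigma\|\le R$, rather than the triangle inequality.
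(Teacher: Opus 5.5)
Your proposal is correct and follows the paper's proof essentially step for step. It uses the same centered summands $X_t=\frac1m\bigl(\frac{v_{i_t}\otimes v_{i_t}}{p_{i_t}}-\Sigma\bigr)$ and the same uniform bound $L=R/m$, which the paper obtains through the quadratic-form version of your positive semidefinite inequality, $|a-b|\le\max\{a,b\}$ for $a,b\ge0$. It also has the same variance bound $V\le R\|\Sigma\|/m$ and the same threshold $M_{\varepsilon,\delta}=\frac83 R\|\Sigma\|\log(2n/\delta)/\varepsilon^2$. Your explicit justification of $\|\Sigma\|\le\tr(\Sigma)\le R$, via Proposition \ref{trace} and the support condition, fills in a step the paper uses without comment.
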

\begin{proof}
    Fix $\varepsilon\in(0,\|\Sigma\|],\delta\in(0,1)$. The strategy of the proof revolves around bounding the tail $\P(\|\randgram-\Sigma\|\ge\varepsilon)$ using the matrix Bernstein inequality (Theorem \ref{bernstein}). To this end, we must write $\randgram-\Sigma$ as the sum of independent, random, self-adjoint matrices, and this is where Lemma \ref{repr.sigma} comes into play: define for every $t\in\{1,\dots,m\}$
    \begin{align*}
        Y_t:=\frac{v_{i_t}\otimes v_{i_t}}{p_{i_t}},\qquad X_t:=\frac{1}{m}(Y_t-\Sigma).
    \end{align*}
    Clearly, we have $\randgram-\Sigma=\sum_{t=1}^mX_t$. Each $Y_t$ is self-adjoint and so is $\Sigma$: it follows that $X_t$ is self-adjoint for every $t\in\{1,\dots,m\}$. Furthermore, since the random indices $i_1,\dots, i_m$ are independent, we have that $Y_1,\dots,Y_m$ are independent random matrices, and the same applies to $X_1,\dots,X_m$. The expected value of each $Y_t$ is
    \[
    \E(Y_t)=\sum_{j\in\supp(p)}p_j\frac{v_j\otimes v_j}{p_j}=\sum_{j\in\supp(p)}v_j\otimes v_j=\Sigma,
    \]
    (where the last step follows from Lemma \ref{repr.sigma}), and this entails $\E(X_t)=0$ for every $t$.

    Next, let $t\in\{1,\dots,m\}$. We can use the fact that $X_t$ is self-adjoint to evaluate its operator norm as follows:
    \[
    \|X_t\|=\sup_{\substack{x\in\mathbb{C}^n \\ \|x\|=1}}|\<X_tx,x\>|.
    \]
    For every $x\in\C^n$ such that $\|x\|=1$ we have
    \begin{align*}
        |\<X_tx,x\>|&=\frac{1}{m}\left|\frac{1}{p_{i_t}}\<(v_{i_t}\otimes v_{i_t})x,x\>-\<\Sigma x,x\>\right|\\
        &=\frac{1}{m}\left|\frac{1}{p_{i_t}}\<\<x,v_{i_t}\>v_{i_t},x\>-\<\Sigma x,x\>\right|\\
        &=\frac{1}{m}\left|\frac{1}{p_{i_t}}\<x,v_{i_t}\>\<v_{i_t},x\>-\<\Sigma x,x\>\right|\\
        &=\frac{1}{m}\left|\frac{1}{p_{i_t}}|\<x,v_{i_t}\>|^2-\<\Sigma x,x\>\right|\\
        &\le\frac{1}{m}\max\left\{\frac{1}{p_{i_t}}|\<x,v_{i_t}\>|^2,\<\Sigma x,x\>\right\}
    \end{align*}
    Notice that $\frac{1}{p_{i_t}}|\<x,v_{i_t}\>|^2\le\frac{1}{p_{i_t}}\|x\|^2\|v_{i_t}\|^2\le  R$, where the last inequality follows from the definition of $R$. We also have that $\<\Sigma x,x\>\le\|\Sigma\|\le R$: this gives us the bound
    \begin{align}\label{bound.L}
        \|X_t\|\le\frac{R}{m}=:L.
    \end{align}
    Next, we will find a suitable bound for the variance parameter $V:=\|\sum_{t=1}^m\E(X_t^2)\|$. Start with
    \begin{align*}
        &X_t^2=\frac{1}{m^2}(Y_t-\Sigma)^2=\frac{1}{m^2}(Y_t^2-2Y_t\Sigma+\Sigma^2),\\
        &\E(X_t^2)=\frac{1}{m^2}(\E(Y_t^2)-2\E(Y_t)\Sigma+\Sigma^2)=\frac{1}{m^2}(\E(Y_t^2)-2\Sigma^2+\Sigma^2)=\frac{1}{m^2}(\E(Y_t^2)-\Sigma^2).
    \end{align*}
    For every $x\in\C^n$ with $\|x\|=1$ one has
    \begin{align*}
        \<\E(X_t^2)x,x\>=\frac{1}{m^2}(\<\E(Y_t^2)x,x\>-\<\Sigma^2x,x\>).
    \end{align*}
    Use the fact that $Y_t^2=\frac{\|v_{i_t}\|^2v_{i_t}\otimes v_{i_t}}{p_{i_t^2}}=\frac{\|v_{i_t}\|^2}{p_{i_t}}Y_t$ to bound the first summand in the following way:
    \begin{align*}
        \<\E(Y_t^2)x,x\>&=\E\left(\frac{\|v_{i_t}\|^2}{p_{i_t}}\<Y_tx,x\>\right)\\
        &\le R\E(\<Y_tx,x\>)\\
        &= R\<\E(Y_t)x,x\>\\
        &= R\<\Sigma x,x\>\\
        &\le R\|\Sigma\|.
    \end{align*}
    On the other hand, we have
    \[
    \<\Sigma^2x,x\>=\|\Sigma x\|^2\ge0.
    \]
    Ultimately, we are left with
    \[
    \<\E(X_t^2)x,x\>\le\frac{ R\|\Sigma\|}{m^2}.
    \]
    Since the right-hand side is independent of $t$,
    \[
    \left\<\sum_{t=1}^m\E(X_t^2)x,x\right\>\le\frac{ R\|\Sigma\|}{m}.
    \]
    Taking the supremum as $\|x\|=1$, we deduce 
    \begin{align}\label{bound.V}
        V\le\frac{ R\|\Sigma\|}{m}.
    \end{align}
    We can now use the matrix Bernstein inequality (Theorem \ref{bernstein}) to infer
    \begin{align*}
        \P\left(\|\randgram-\Sigma\|\ge\varepsilon\right)\le 2n\cdot\exp\left(\frac{-\varepsilon^2/2}{V+L\varepsilon/3}\right).
    \end{align*}
    Substituting the value of $L$ \eqref{bound.L} and using the bound for $V$ \eqref{bound.V}, we get
    \[
    \P\left(\|\randgram-\Sigma\|\ge\varepsilon\right)\le 2n\cdot\exp\left(\frac{-m\varepsilon^2/2}{ R\|\Sigma\|+ R\varepsilon/3}\right).
    \]
    The claim follows if, for sufficiently large $m$, we can prove that
    \[
    2n\cdot\exp\left(\frac{-m\varepsilon^2/2}{ R\|\Sigma\|+ R\varepsilon/3}\right)\le\delta,
    \]
    which is the case if
    \begin{align}\label{m.lowerbound}
        m&\ge2 R\left(\|\Sigma\|+\frac{\varepsilon}{3}\right)\frac{\log(2n/\delta)}{\varepsilon^2}.
    \end{align}
    Since $\varepsilon\le\|\Sigma\|$, \eqref{m.lowerbound} is true if, for example,
    \begin{align*}
        m&\ge2 R\left(\|\Sigma\|+\frac{\|\Sigma\|}{3}\right)\frac{\log(2n/\delta)}{\varepsilon^2}\\
        &=\frac{8}{3} R\|\Sigma\|\frac{\log(2n/\delta)}{\varepsilon^2}=:M_{\varepsilon,\delta}. \qedhere
    \end{align*}
\end{proof}
If $\randgram$ concentrates around $\Sigma$, which is invertible by Lemma \ref{sigma.invertible}, then $\randgram$ itself must be invertible with high probability for sufficiently large $m$.
\begin{corollary}\label{randgram.inv}
    For every $m\ge M_\delta:=M_{\lambda_0,\delta}=\frac{8}{3} R\|\Sigma\|\lambda_0^{-2}\log(2n/\delta)$ one has that $\randgram$ is invertible with probability at least $1-\delta$.
\end{corollary}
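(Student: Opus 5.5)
We apply Proposition \ref{prob.conv} with the choice $\varepsilon=\lambda_0$ and then use a standard perturbation argument for invertibility. First we check that this choice is admissible. Proposition \ref{prob.conv} requires $\varepsilon\in(0,\|\Sigma\|]$. By Proposition \ref{sigma.invertible}, $\Sigma$ is invertible. It is also positive semidefinite, since $\Sigma=(U\iota_n)^*U\iota_n$. Hence its smallest eigenvalue satisfies $\lambda_0>0$. Moreover $\lambda_0\le\|\Sigma\|$, because $\|\Sigma\|$ is its largest eigenvalue. So $\varepsilon=\lambda_0$ is allowed.

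With this choice, the threshold $M_{\lambda_0,\delta}=\frac{8}{3}R\|\Sigma\|\lambda_0^{-2}\log(2n/\delta)$ is exactly the $M_\delta$ of the statement. Proposition \ref{prob.conv} then gives, for every $m\ge M_\delta$,
\[
\P\bigl(\|\randgram-\Sigma\|<\lambda_0\bigr)\ge1-\delta.
\]

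It remains to show that on the event $\{\|\randgram-\Sigma\|<\lambda_0\}$ the matrix $\randgram$ is invertible. We have two routes.

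The first route is a Neumann series. Write $\randgram=\Sigma\bigl(I-\Sigma^{-1}(\Sigma-\randgram)\bigr)$. The correction term satisfies
\[
\|\Sigma^{-1}(\Sigma-\randgram)\|\le\lambda_0^{-1}\|\Sigma-\randgram\|<1 .
\]
Therefore the bracket is invertible by the Neumann series, and so is $\randgram$.

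The second route uses self-adjointness. Both matrices are self-adjoint, so for any unit vector $x\in\C^n$,
\[
\<\randgram x,x\>\ge\<\Sigma x,x\>-\|\randgram-\Sigma\|\ge\lambda_0-\|\randgram-\Sigma\|>0 .
\]
Hence $\randgram$ is positive definite and in particular invertible.

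Since the invertibility event contains an event of probability at least $1-\delta$, the claim follows. There is no real obstacle here. The only point needing care is the admissibility condition $\lambda_0\le\|\Sigma\|$ in Proposition \ref{prob.conv}, which holds for the reason given above.
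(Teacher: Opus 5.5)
Your proof is correct and follows the paper's argument: the paper likewise applies Proposition~\ref{prob.conv} with $\varepsilon=\lambda_0\le\|\Sigma\|$, and uses the Neumann-series perturbation bound (Lemma~\ref{norm.of.inverse}) to get $\{\|\randgram-\Sigma\|<\lambda_0\}\subseteq\{\randgram\text{ is invertible}\}$. Your second route, which shows directly that the self-adjoint matrix $\randgram$ is positive definite on that event, is an equally valid shortcut.
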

\begin{proof}
     By Lemma \ref{norm.of.inverse},
     \[
     \left\{\|\randgram-\Sigma\|<\lambda_0=\|\Sigma^{-1}\|^{-1}\right\}\subseteq\{\randgram\text{ is invertible}\}.
     \]
     It thus suffices to apply Proposition \ref{prob.conv} with $\varepsilon=\lambda_0\le\|\Sigma\|$ to conclude that for every $m\ge M_{\lambda_0,\delta}=M_\delta$
   \begin{equation*}
          \P\left(\randgram\text{ is invertible}\right)\ge\P\left(\|\randgram-\Sigma\|<\lambda_0=\|\Sigma^{-1}\|^{-1}\right)\ge1-\delta. \qedhere
   \end{equation*}
\end{proof}
To conclude the proof of Theorem \ref{mainresult}, we are going to estimate the reconstruction error $\|f-\tilde f\|$. 

\begin{proof}[Proof of Theorem \ref{mainresult}]
    Let $m\ge M_\delta$: by Corollary \ref{randgram.inv}, we can define
    \[
    \tilde f:=W\iota_n\randgram^{-1}\iota_n^*U^*Q_{\Omega}S^*f
    \]
    on a set of probability at least $1-\delta$. Decompose $f=P_{\mathcal W_n}f+P_{\mathcal W_n^\perp}f$:
    \begin{align*}
        \tilde f=W\iota_n\randgram^{-1}\iota_n^*U^*Q_\Omega S^*P_{\mathcal W_n}f+W\iota_n\randgram^{-1}\iota_n^*U^*Q_\Omega S^*P_{\mathcal W_n^\perp}f=\tilde f_1+W\iota_n\randgram^{-1}\widehat C_\Omega f,
    \end{align*}
    where $\tilde f_1:=W\iota_n\randgram^{-1}\iota_n^*U^*Q_\Omega S^*P_{\mathcal W_n}f$. Use the fact that $$P_{\mathcal W_n}=P_{\ran(W\iota_n)}=W\iota_n(\iota_n^*W^*W\iota_n)^{-1}\iota_n^*W^*$$ to write
    \begin{align*}
        \tilde f_1=W\iota_n\randgram^{-1}\iota_n^*U^*Q_\Omega S^*W\iota_n(\iota_n^*W^*W\iota_n)^{-1}\iota_n^*W^*f.
    \end{align*}
    Next, recognize $\iota_n^*U^*Q_\Omega S^*W\iota_n=\randgram$ to derive
    \[
    \tilde f_1=W\iota_n(\iota_n^*W^*W\iota_n)^{-1}\iota_n^*W^*f=P_{\mathcal W_n}f,
    \]
    hence
    \[
    \tilde f=P_{\mathcal W_n}f+W\iota_n\randgram^{-1}\widehat C_\Omega f.
    \]
    It follows that
    \[
    f-\tilde f=P_{\mathcal W_n^\perp}f-W\iota_n\randgram^{-1}\widehat C_\Omega f.
    \]
    Finally, by the Pythagorean theorem,
    \begin{align*}
        \|f-\tilde f\|&=(\|P_{\mathcal W_n^\perp}f\|^2+\|W\iota_n\randgram^{-1}\widehat C_\Omega f\|^2)^{\frac{1}{2}}\le\|P_{\mathcal W_n^\perp}f\|(1+K_{n,\Omega}^2)^{\frac{1}{2}}.\qedhere
    \end{align*}
\end{proof}
Under the additional geometric assumption $\mathcal W\subseteq\mathcal S$, one is able to uniformly bound the condition number of $\Sigma$.
\begin{prop}\label{unifbound.cond_number}
If $\mathcal W\subseteq\mathcal S$, then $\kappa(\Sigma):=\|\Sigma\|\|\Sigma^{-1}\|\le\frac{BD}{AC}$.
\end{prop}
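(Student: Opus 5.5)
We will bound the two extreme eigenvalues of $\Sigma=(U\iota_n)^*U\iota_n$ through its quadratic form. For any $x\in\C^n$ we have
\[
\<\Sigma x,x\>=\|U\iota_n x\|^2=\|S^*W\iota_n x\|^2 .
\]
Set $w:=W\iota_n x\in\mathcal W_n\subseteq\mathcal W$. The problem then reduces to comparing $\|S^*w\|$ with $\|x\|$, and we do this in two stages: first compare $\|S^*w\|$ with $\|w\|$ using the frame bounds of $\{s_k\}$, then compare $\|w\|$ with $\|x\|$ using the Riesz bounds of $\{w_k\}$.

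\emph{Upper bound.} The frame inequality gives $\|S^*w\|^2\le B\|w\|^2$. The analysis operator bound holds on all of $\mathcal H$, since components orthogonal to $\mathcal S$ are annihilated by $S^*$; in particular it holds regardless of the subspace condition. The Riesz upper bound gives $\|W\iota_n x\|^2\le D\|\iota_n x\|^2=D\|x\|^2$. Combining these yields $\<\Sigma x,x\>\le BD\|x\|^2$. Since $\Sigma$ is self-adjoint and positive, $\|\Sigma\|=\sup_{\|x\|=1}\<\Sigma x,x\>\le BD$.

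\emph{Lower bound.} This is the step where $\mathcal W\subseteq\mathcal S$ is used, and it is the only genuine point of the argument. Because $w\in\mathcal W\subseteq\mathcal S$, the lower frame inequality $A\|s\|^2\le\|S^*s\|^2$, which holds only for $s\in\mathcal S$, applies to $w$. Hence $\|S^*w\|^2\ge A\|w\|^2$. The lower Riesz inequality then gives $\|w\|^2=\|W\iota_n x\|^2\ge C\|x\|^2$. Together, $\<\Sigma x,x\>\ge AC\|x\|^2$, so $\lambda_0\ge AC$ and $\|\Sigma^{-1}\|=\lambda_0^{-1}\le\frac{1}{AC}$. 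Note that this also re-proves invertibility of $\Sigma$ in this setting, consistent with Proposition~\ref{sigma.invertible}.

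Multiplying the two estimates gives $\kappa(\Sigma)=\|\Sigma\|\|\Sigma^{-1}\|\le\frac{BD}{AC}$, uniformly in $n$. The same two bounds, $\|\Sigma\|\le BD$ and $\|\Sigma^{-1}\|\le 1/(AC)$, are exactly what Corollary~\ref{cor:Rlogn} asserts. No probabilistic input is needed, and the only care required is to note that the lower frame bound is applied to a vector lying in $\mathcal S$.
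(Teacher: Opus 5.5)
Your proof is correct and follows essentially the paper's argument. Both proofs bound $\|U\iota_n x\| = \|S^*W\iota_n x\|$ above by $\sqrt{BD}\|x\|$ and, using $\mathcal W\subseteq\mathcal S$, below by $\sqrt{AC}\|x\|$; the only difference is that you read off the extreme eigenvalues of $\Sigma$ from the Rayleigh quotient, while the paper applies Lemma~\ref{below.bound.adjoint} to $U\iota_n$. Your direct use of the lower frame inequality on $\mathcal S$ is also cleaner than the paper's detour through the claim $\|S\alpha\|\ge\sqrt A\|\alpha\|$ for all $\alpha\in\ell^2$, which fails for redundant frames (where $S$ need not be injective); what the paper extracts from that claim is exactly the frame inequality you apply.
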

\begin{proof}
    First, we have $\|\Sigma\|=\|\iota_n^*U^*U\iota_n\|\le\|U\|^2\le\|S\|^2\|\|W\|^2\le BD$. Since we know that $\|S\alpha\|\ge\sqrt A\|\alpha\|$ for every $\alpha\in\ell^2$, Lemma \ref{below.bound.adjoint} entails that $\|S^*S\alpha\|\ge\sqrt A\|S\alpha\|$ for every $\alpha\in\ell^2$, or $\|S^*s\|\ge\sqrt A\|s\|$ for every $s\in\mathcal S$. From the assumption $\mathcal W\subseteq \mathcal S$, we deduce that $\|U\iota_n x\|=\|S^*W\iota_n x\|\ge\sqrt A\|W\iota_n x\|\ge\sqrt{AC}\|x\|$ for every $x\in\C^n$. But then we have
    \[
    \|\Sigma x\|=\|(U\iota_n)^*U\iota_n x\|\ge\sqrt{AC}\|U\iota_n x\|\ge AC\|x\|\quad\text{for every }x\in\C^n,
    \]
    again by Lemma \ref{below.bound.adjoint}. This allows us to conclude that $\|\Sigma^{-1}\|\le(AC)^{-1}$, hence
    \begin{equation*}
    \kappa(\Sigma)=\|\Sigma\|\|\Sigma^{-1}\|\le\frac{BD}{AC}. \qedhere
    \end{equation*}
\end{proof}
This immediately yields Corollary\ref{cor:Rlogn}.
\begin{proof}[Proof of Corollary \ref{cor:Rlogn}]
    Just like in the proof of Proposition \ref{unifbound.cond_number}, we have that $\|\Sigma\|\|\Sigma^{-1}\|^2\le\frac{BD}{A^2C^2}$. It follows that the conclusions of Theorem \ref{mainresult} hold provided that
    \[
    m\ge\frac{8}{3}\frac{BD}{A^2C^2}R\log(2n/\delta). \qedhere
    \]
\end{proof}

\subsection{Proofs for the Empirical Cross-term}

Analogously to the Gram case, we need to express the difference $\widehat C_\Omega-C$ as the sum of independent, centered operators. We recall that $u_j:=P_{\mathcal W_n^\perp}s_j\in\mathcal W_n^\perp$.
\begin{lemma}\label{repr.C}
    One has:
    \[
    C=\sum_{j\in\N}v_j\otimes u_j\qquad\text{and}\qquad\widehat C_\Omega=\frac{1}{m}\sum_{t=1}^m\frac{v_{i_t}\otimes u_{i_t}}{p_{i_t}},
    \]
    where the series converges in the operator norm topology. In particular, in accordance to our convention \eqref{supp(p)}, we have
    \[
    C=\sum_{j\in\supp(p)}v_j\otimes u_j.
    \]
\end{lemma}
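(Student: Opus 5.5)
The plan mirrors the proof of Lemma \ref{repr.sigma}, with one extra step to handle the fact that the second factor of each rank-one operator now lives in the infinite-dimensional space $\mathcal H$.

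\textbf{Empirical identity.} Here we expect pure algebra. Write
\[
\widehat C_\Omega=\frac{1}{m}\sum_{t=1}^m\frac{1}{p_{i_t}}\,\iota_n^*U^*(e_{i_t}\otimes e_{i_t})S^*P_{\mathcal W_n^\perp}.
\]
Then apply the rank-one rule $A(x\otimes y)B=(Ax)\otimes(B^*y)$. The left factor gives $\iota_n^*U^*e_{i_t}=v_{i_t}$. The right factor gives $(S^*P_{\mathcal W_n^\perp})^*e_{i_t}=P_{\mathcal W_n^\perp}Se_{i_t}=P_{\mathcal W_n^\perp}s_{i_t}=u_{i_t}$. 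This yields the stated formula for $\widehat C_\Omega$.

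\textbf{Series for $C$: strong convergence.} The same computation applied to the partial sums $\sum_{j\le N}e_j\otimes e_j=P_N$ gives
\[
\sum_{j\le N}v_j\otimes u_j=\iota_n^*U^*P_NS^*P_{\mathcal W_n^\perp}.
\]
Since $P_N\to I$ strongly and the outer operators are bounded, these partial sums converge strongly to $C$. Unlike the Gram case, strong convergence of operators $\mathcal H\to\C^n$ does not automatically give norm convergence, because the domain is infinite-dimensional.

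\textbf{Upgrading to norm convergence (the main obstacle).} We plan to pass to adjoints. Write $C_N:=\iota_n^*U^*P_NS^*P_{\mathcal W_n^\perp}$ for the partial sums. Their adjoints are $C_N^*=P_{\mathcal W_n^\perp}SP_NU\iota_n$, whose domain is $\C^n$. The adjoints $C_N^*$ converge strongly to $C^*$, since $P_N\to I$ strongly on the finite-dimensional set $U\iota_n(\C^n)$. On a finite-dimensional domain, strong convergence implies norm convergence, for instance by testing on the basis $\mathbf{e_1},\dots,\mathbf{e_n}$. Finally, $\|C_N-C\|=\|C_N^*-C^*\|$, so the partial sums converge to $C$ in operator norm.

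An alternative route is a direct estimate. By Cauchy--Schwarz and Bessel's inequality,
\[
\Bigl\|\sum_{j>N}v_j\otimes u_j\Bigr\|\le\|S\|\Bigl(\sum_{j>N}\|v_j\|^2\Bigr)^{1/2}.
\]
The tail on the right tends to zero because $\sum_j\|v_j\|^2=\tr(\Sigma)<\infty$.

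\textbf{Restriction to $\supp(p)$.} Assumption \ref{assumptions}(iii) says $v_j=0$ whenever $j\notin\supp(p)$. The corresponding terms $v_j\otimes u_j$ therefore vanish, and dropping them gives $C=\sum_{j\in\supp(p)}v_j\otimes u_j$.
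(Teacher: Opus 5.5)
Your proof is correct. The empirical identity, the strong convergence of the partial sums $\iota_n^*U^*P_NS^*P_{\mathcal W_n^\perp}$ to $C$, and the restriction to $\supp(p)$ via Assumption~\ref{assumptions}(iii) all coincide with the paper's argument.

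Where you differ is the upgrade from strong to norm convergence. The paper shows that the series converges absolutely, via $\sum_j\|v_j\|\|u_j\|\le\sqrt{RR'}\sum_j p_j=\sqrt{RR'}$. This uses the coherence bounds of Assumption~\ref{assumptions}(ii), and tacitly (iii) to handle indices with $p_j=0$. As a side benefit it yields the bound $\|C\|\le\sqrt{RR'}$, which is reused in Proposition~\ref{crossterm}.

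Your routes do not depend on $p$. The first passes to adjoints with the finite-dimensional domain $\C^n$. The second uses Cauchy--Schwarz with the frame upper bound, giving $\|\sum_{j\in F}v_j\otimes u_j\|\le\|S\|(\sum_{j\in F}\|v_j\|^2)^{1/2}$ for any finite $F$, together with $\sum_j\|v_j\|^2=\|U\iota_n\|_{HS}^2<\infty$. Either way, norm convergence (and, by the second, even unconditional convergence) follows from the finite dimensionality of the target space and the boundedness of $S$ alone. It needs neither $R'<\infty$ nor any property of $p$.

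What your argument does not give is absolute convergence. Without a coherence assumption, $\sum_j\|v_j\|$ need not be finite, so $\sum_j\|v_j\|\|u_j\|$ need not be either. The lemma does not ask for absolute convergence, however, so this costs nothing here.
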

\begin{proof}
    The representation of $\widehat C_\Omega$ immediately follows from the definition of $Q_\Omega$ and the properties of rank-1 operators:
    \begin{align*}
        \widehat C_\Omega&=\iota_n^*U^*Q_\Omega S^*P_{\mathcal W_n^\perp}\\
        &=\frac{1}{m}\sum_{t=1}^m\iota_n^*U^*\left(\frac{e_{i_t}\otimes e_{i_t}}{p_{i_t}}\right)S^*P_{\mathcal W_n^\perp}\\
        &=\frac{1}{m}\sum_{t=1}^m\frac{(\iota_n^*U^*e_{i_t})\otimes(P_{\mathcal W_n^\perp}Se_{i_t})}{p_{i_t}}\\
        &=\frac{1}{m}\sum_{t=1}^m\frac{v_{i_t}\otimes u_{i_t}}{p_{i_t}}.
    \end{align*}
    Similarly, it's easy to see that the series $\sum_{j\in\N}v_j\otimes u_j$ converges strongly to $C$. But the series is absolutely convergent:
    \[
    \sum_{j\in\N}\|v_j\otimes u_k\|=\sum_{j\in\N}\|v_j\|\|u_j\|\le \sqrt{RR'}\sum_{j\in\N}p_j=\sqrt{RR'},
    \]
    meaning that it must converge in the operator topology to its pointwise limit $C$.
\end{proof}
We can now apply our extension of the Bernstein inequality.
\begin{prop}\label{crossterm}
    For every $\varepsilon\in(0,\|\Sigma\|],\delta\in(0,1)$ and for every natural 
    \[
    m\ge\tilde M_{\varepsilon,\delta}:=\frac{10}{3}KR''\frac{\log(28n/\delta)}{\varepsilon^2},
    \]
    one has
    \[
    \P(\|\widehat C_\Omega-C\|\ge\varepsilon)\le\delta.
    \]
\end{prop}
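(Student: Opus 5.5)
The natural route is to apply the rectangular Bernstein inequality, Theorem~\ref{op.rect.bernstein}, to the decomposition of $\widehat C_\Omega-C$ given by Lemma~\ref{repr.C}. This mirrors the proof of Proposition~\ref{prob.conv}, with the dimension factor $2n$ replaced by an effective-rank factor that we must show is at most $2n$.

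\textbf{Setup.} Set $Y_t:=\frac{v_{i_t}\otimes u_{i_t}}{p_{i_t}}\colon\mathcal H\to\C^n$ and $X_t:=\frac1m(Y_t-C)$.
\begin{itemize}
\item By Lemma~\ref{repr.C}, $\E(Y_t)=\sum_{j\in\supp(p)}v_j\otimes u_j=C$, so the $X_t$ are independent and centered, and $\widehat C_\Omega-C=\sum_t X_t$.
\item $Y_t$ has rank one, so $X_t$ has rank at most $n$. Hence $X_t^*X_t$ and $X_tX_t^*$ are trace-class.
\end{itemize}

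\textbf{Uniform bound $L$.} We have $\|Y_t\|=\|v_{i_t}\|\|u_{i_t}\|/p_{i_t}\le\sqrt{RR'}\le R''$. Also $\|C\|\le\sum_j\|v_j\|\|u_j\|\le\sqrt{RR'}\le R''$, as in the proof of Lemma~\ref{repr.C}. Therefore $L:=2R''/m$.

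\textbf{Variance bounds.} The relevant identities are $(v\otimes u)^*(v\otimes u)=\|v\|^2\,u\otimes u$ and $(v\otimes u)(v\otimes u)^*=\|u\|^2\,v\otimes v$. Exactly as in the Gram case, $\E(X_t^*X_t)=\frac1{m^2}(\E Y_t^*Y_t-C^*C)$. Dropping the positive term $C^*C$ gives
\[
V_1\le\frac1m\sum_j\frac{\|v_j\|^2}{p_j}\,u_j\otimes u_j\le\frac{R}{m}\,T .
\]
Similarly, $V_2\le\frac{R'}{m}\Sigma$. Hence $\max\{\|V_1\|,\|V_2\|\}\le KR''/m=:\sigma^2$.

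\textbf{Effective rank.} The key observation is that $\tr(A^*A)=\tr(AA^*)$. Applied to $Y_t$ and to $C$, it gives $\tr(V_1)=\tr(V_2)$. Since $V_2$ is a positive $n\times n$ matrix, $\tr(V_2)\le n\|V_2\|$. Therefore
\[
\frac{\tr V_1+\tr V_2}{\max\{\|V_1\|,\|V_2\|\}}\le\frac{2\tr V_2}{\|V_2\|}\le 2n,
\]
and the prefactor in Theorem~\ref{op.rect.bernstein} becomes $28n$.

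\textbf{Threshold for $m$.} Using $\varepsilon\le\|\Sigma\|\le K$, we get $L\varepsilon/3\le\frac{2KR''}{3m}$, so the Bernstein denominator is at most $\frac{5}{3}\frac{KR''}{m}$. The bound $28n\exp\!\big(-\tfrac{3m\varepsilon^2}{10KR''}\big)\le\delta$ then holds exactly when $m\ge\tilde M_{\varepsilon,\delta}$.

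\textbf{Main obstacle: the side condition on $\varepsilon$.} Theorem~\ref{op.rect.bernstein} requires $\varepsilon\ge\frac16(L+\sqrt{L^2+36\sigma^2})$. I will verify that this is automatic for $m\ge\tilde M_{\varepsilon,\delta}$.
\begin{itemize}
\item Since $\log(28n/\delta)>\log 28>3$, we have $m\ge 10KR''/\varepsilon^2$.
\item This gives $\sigma^2\le\varepsilon^2/10$.
\item It also gives $L\le\varepsilon^2/(5K)\le\varepsilon/5$.
\item Consequently $\frac16(L+\sqrt{L^2+36\sigma^2})\le\frac16\big(\tfrac15+\sqrt{\tfrac1{25}+3.6}\big)\varepsilon<\varepsilon$.
\end{itemize}
The only other delicate point is the trace identity $\tr V_1=\tr V_2$. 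This needs cyclicity of the trace for Hilbert–Schmidt operators together with exchanging $\E$ and $\tr$. Both are justified because the relevant series converge absolutely in trace norm: $\sum_j\|v_j\|^2\|u_j\|^2/p_j\le RR'$.
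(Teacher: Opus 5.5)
Your proposal is correct and follows the paper's proof essentially step by step. It uses the same summands $X_t=\frac1m(Y_t-C)$, the same constants $L=2R''/m$ and $\sigma^2=KR''/m$, the same identity $\tr V_1=\tr V_2$ giving the prefactor $28n$, and the same bound $\varepsilon\le\|\Sigma\|\le K$ to reach $\tilde M_{\varepsilon,\delta}$. The only cosmetic difference is how you handle the side condition $\varepsilon\ge\frac16(L+\sqrt{L^2+36\sigma^2})$: you check it with crude numerical bounds, whereas the paper rewrites it as $m\ge\frac{R''}{3\varepsilon^2}(3K+2\varepsilon)$ and notes that this is dominated by the Bernstein threshold because $28n/\delta>e$.
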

\begin{proof}
    Define for every $t\in\{1,\dots,m\}$
    \[
    Y_t:=\frac{v_{i_t}\otimes u_{i_t}}{p_{i_t}},\qquad X_t:=\frac{1}{m}(Y_t-C).
    \]
    By Lemma \ref{repr.C}, we have $\widehat C_{\Omega}-C=\sum_{t=1}^m X_t$. Moreover, $X_1,\dots,X_m$ are independent and their expected value is
    \[
    \E(X_t)=\frac{1}{m}(\E(Y_t)-C),
    \]
    but since
    \[
    \E(Y_t)=\sum_{j\in\supp(p)}p_j\frac{v_j\otimes u_j}{p_j}=\sum_{j\in\supp(p)}v_j\otimes u_j=C,
    \]
    (where the last equality follows from Lemma \ref{repr.C}), we have $\E(X_t)=0$ for each $t$. Notice that $X_t^*X_t$ is trace class because $X_tX_t^*$ is a $n\times n$ matrix. Furthermore,
    \begin{align*}
        \|X_t\|=\frac{1}{m}\|Y_t-C\|\le\frac{1}{m}(\|Y_t\|+\|C\|),
    \end{align*}
    but we have $\|Y_t\|=\frac{\|v_{i_t}\|\|u_{i_t}\|}{p_{i_t}}\le\frac{\sqrt{Rp_{i_t}{R'p_{i_t}}}}{p_{i_t}}=\sqrt{RR'}$ and also
    \begin{equation}\label{convention.in.C}
        \|C\|\le\sum_{j\in\N}\|v_j\|\|u_j\|=\sum_{j\in\supp(p)}\|v_j\|\|u_j\|\le\sqrt{RR'}\sum_{j\in\supp(p)}p_j=\sqrt{RR'},
    \end{equation}
    and this allows us to deduce that
    \[
    \|X_t\|\le\frac{2\sqrt{RR'}}{m}\le\frac{2R''}{m}=:L.
    \]
    We stress that in \eqref{convention.in.C} we were able to sum over the support of the distribution $p$ thanks to our convention \eqref{supp(p)}. While this might seem like a mere technicality, it is not guaranteed that $u_j=0$ when $j\notin\supp(p)$.\\
    Next, define $V_1=\sum_{t=1}^m\E(X_t^*X_t)$ and notice that
    \begin{equation}\label{to.be.expected}
        X_t^*X_t=\frac{1}{m^2}(Y_t^*Y_t-Y_t^*C-C^*Y_t+C^*C).
    \end{equation}
    Now,
    \[
    Y_t^*Y_t=\frac{\|v_{i_t}\|^2}{p_{i_t}^2}u_{i_t}\otimes u_{i_t}=\frac{\|v_{i_t}\|^2}{p_{i_t}}Z_t,
    \]
    where $Z_t:=\frac{u_{i_t}\otimes u_{i_t}}{p_{i_t}}$. On taking the expected value to both sides of \eqref{to.be.expected}, we get
    \[
    \E(X_t^*X_t)=\frac{1}{m^2}\left(\E\left(\frac{\|v_{i_t}\|^2}{p_{i_t}}Z_t\right)-C^*C\right).
    \]
    Moreover
    \[
    \E(Z_t)=\sum_{j\in\supp(p)}p_j\frac{u_j\otimes u_j}{p_j}=\sum_{j\in\supp(p)}u_j\otimes u_j=T.
    \]
    Fix $h\in\mathcal H$ such that $\|x\|=1$. Then
    \begin{align*}
        \<\E(X_t^*X_t)h,h\>&=\frac{1}{m^2}\left(\E\left(\frac{\|v_{i_t}\|^2}{p_{i_t}}\<Z_th,h\>\right)-\<C^*Ch,h\>\right)\\
        &\le\frac{1}{m^2}(R\cdot\E(\<Z_th,h\>)-\|Ch\|^2)\\
        &\le\frac{1}{m^2}(R\<\E(Z_t)h,h\>)\\
        &=\frac{1}{m^2}R\<Th,h\>\\
        &\le\frac{R\|T\|}{m^2}.
    \end{align*}
    Since $V_1$ is a self-adjoint, positive operator on $\mathcal H$, we can estimate its operator norm by
    \begin{equation}\label{bound.V1}
        \|V_1\|=\sup_{\substack{h\in\mathcal H \\ \|h\|=1}}\<V_1h,h\>=\sup_{\substack{h\in\mathcal H \\ \|h\|=1}}\sum_{t=1}^m\<\E(X_t^*X_t)h,h\>\le\frac{R\|T\|}{m}.
    \end{equation}
    An analogous bound holds for the norm of $V_2=\sum_{t=1}^m\E(X_tX_t^*)$: indeed,
    \[
    \E(X_tX_t^*)=\frac{1}{m^2}(\E(Y_tY_t^*)-CC^*),
    \]
    and since $Y_tY_t^*=\frac{\|u_{i_t}\|^2}{p_{i_t}^2}v_{i_t}\otimes v_{i_t}$, for every $x\in\C^n$ such that $\|x\|=1$ we have
    \begin{align*}
        \<\E(X_tX_t^*)x,x\>&=\frac{1}{m^2}(\<\E(Y_tY_t^*)x,x\>-\<CC^*x,x\>)\\
        &=\frac{1}{m^2}\left(\E\left(\frac{\|u_{i_t}\|^2}{p_{i_t}}\left\<\frac{v_{i_t}\otimes v_{i_t}}{p_{i_t}}x,x\right\>\right)-\|C^*x\|^2\right)\\
        &\le\frac{1}{m^2}R'\left\<\E\left(\frac{v_{i_t}\otimes v_{i_t}}{p_{i_t}}\right)x,x\right\>\\
        &=\frac{1}{m^2}R'\<\Sigma x,x\>\\
        &\le\frac{R'\|\Sigma\|}{m^2},
    \end{align*}
    and we obtain
    \begin{equation}\label{bound.V2}
        \|V_2\|\le\frac{R'\|\Sigma\|}{m}.
    \end{equation}
    From the bounds \eqref{bound.V1} and \eqref{bound.V2} we get
    \[
    \max\{\|V_1\|,\|V_2\|\}\le\frac{\max\{R\|T\|,R'\|\Sigma\|\}}{m}\le\frac{R''K}{m}=\frac{LK}{2}=:\sigma^2,
    \]
    where $K:=\max\{\|\Sigma\|,\|T\|\}$. We can apply Theorem \ref{op.rect.bernstein} to infer the following: for every $\varepsilon\ge\frac{1}{6}(L+\sqrt{L^2+36\sigma^2})$ one has
    \begin{align*}
        \P\left(\|\widehat C_\Omega-C\|\ge\varepsilon\right)&\le 14\frac{\tr(V_1)+\tr(V_2)}{\max\{\|V_1\|,\|V_2\|\}}\exp\left(\frac{-\varepsilon^2/2}{\sigma^2+L\varepsilon/3}\right)\\
        &\le14\frac{2\tr(V_2)}{\|V_2\|}\exp\left(\frac{-\varepsilon^2/2}{\sigma^2+L\varepsilon/3}\right)\\
        &\le28n\exp\left(\frac{-\varepsilon^2/2}{\sigma^2+L\varepsilon/3}\right),
    \end{align*}
    where we've employed the cyclic property of the trace to entail
    \[
    \tr(V_1)=\sum_{t=1}^m\E(\tr(X_t^*X_t))=\sum_{t=1}^m\E(\tr(X_tX_t^*))=\tr(V_2),
    \]
    and we also bounded $\frac{1}{\max\{\|V_1\|,\|V_2\|\}}\le\frac{1}{\|V_2\|}$ in order to get the effective rank $r(V_2)=\frac{\tr(V_2)}{\|V_2\|}$ of $V_2$. Finally, since $V_2$ is a $n\times n$ self-adjoint matrix, one has $r(V_2)\le n$ (indeed, under these assumptions, matrix norm = spectral radius $\ge\frac{\text{trace}}{n}$). Now, the inequality $\varepsilon\ge\frac{1}{6}(L+\sqrt{L^2+36\sigma^2})$ amounts to saying
    \[
    L\le\frac{6\varepsilon^2}{3K+2\varepsilon},
    \]
    which in turn implies $m\ge\frac{R''}{3\varepsilon^2}(3K+2\varepsilon)$. On the other hand, simplifying
    \[
    28n\cdot\exp\left(\frac{-\varepsilon^2/2}{\sigma^2+L\varepsilon/3}\right)\le\delta
    \]
    yields
    \begin{equation}\label{final.expression.m}
        m\ge\frac{2}{3}R''(3K+2\varepsilon)\frac{\log(28n/\delta)}{\varepsilon^2}.
    \end{equation}
    Since $28n/\delta>e$, \eqref{final.expression.m} is the most restrictive condition. Because $\varepsilon<\|\Sigma\|\le K$, the claim follows with
    \[
    \tilde M_{\varepsilon,\delta}:=\frac{10}{3}KR''\frac{\log(28n/\delta)}{\varepsilon^2}. \qedhere
    \]
\end{proof}
By combining the concentration inequalities for the random Gram matrix (Proposition \ref{prob.conv}) and for the empirical cross-term operator (Proposition \ref{crossterm}), we are able to provide an explicit concentration estimate for the factor $K_{n,\Omega}$.
\begin{proof}[Proof of Theorem \ref{K_n,omega}]
    Consider the two following events:
    \begin{align*}
        &E_1:=\left\{\|\randgram^{-1}-\Sigma^{-1}\|<\frac{\varepsilon}{\Lambda\sqrt D}\right\},\\
        &E_2:=\left\{\|\widehat C_\Omega-C\|<\frac{\varepsilon}{\Lambda\sqrt D}\right\},
    \end{align*}
    and let $m\ge M^*_{\varepsilon,\delta}:=\max\{\tilde M_{\frac{\varepsilon}{\Lambda\sqrt D},\frac{\delta}{2}},\tilde M_{\frac{\varepsilon}{4\|\Sigma^{-1}\|^2\Lambda\sqrt D},\frac{\delta}{2}}\}$. By assumption, $\frac{\varepsilon}{\Lambda\sqrt D}=\min\{1,\frac{\varepsilon}{\Lambda\sqrt D}\}$. Lemma \ref{cont.comp} infers that
    \[
    E_1\cap E_2\subseteq\{\|\randgram^{-1}\widehat C_\Omega-\Sigma^{-1} C\|<\varepsilon/\sqrt D\}.
    \]
    Moreover, since we have
    \[
    |K_{n,\Omega}-\|W\iota_n\Sigma^{-1} C\||\le\|W\iota_n\randgram^{-1}\widehat C_\Omega-W\iota_n\Sigma^{-1} C\|\le\sqrt D\|\randgram^{-1}\widehat C_\Omega- \Sigma^{-1} C\|
    \]
    It follows that 
    $E_1\cap E_2\subseteq\{|K_{n,\Omega}-\|W\iota_n\Sigma^{-1} C\||<\varepsilon\}$. Therefore, the claim follows if we prove that $\P(E_1\cap E_2)\ge1-\delta$, and we do this by showing that each $E_j$ ($j=1,2$) occurs with probability that exceeds $1-\frac{\delta}{2}$.\\
    Since $m\ge\tilde M_{\frac{\varepsilon}{\Lambda\sqrt D},\frac{\delta}{2}}$, we immediately get $P(E_2)\ge1-\frac{\delta}{2}$ from Proposition \ref{crossterm}. Furthermore, we have by Lemma \ref{cont.inv} that
    \[
    E_1\supseteq\left\{\|\randgram-\Sigma\|<\frac{\varepsilon}{4\|\Sigma^{-1}\|^2\Lambda\sqrt D}\right\},
    \]
    where we also used the fact that $\frac{\varepsilon}{4\|\Sigma^{-1}\|^2\Lambda\sqrt D}=\min\{\frac{1}{2\|\Sigma^{-1}\|},\frac{\varepsilon}{4\|\Sigma^{-1}\|^2\Lambda\sqrt D}\}$ because we are assuming $\varepsilon<2\|\Sigma^{-1}\|\Lambda\sqrt D$. Finally, we have
    \[
    \P(E_1)\ge\P\left(\|\randgram-\Sigma\|<\frac{\varepsilon}{4\|\Sigma^{-1}\|^2\Lambda\sqrt D}\right)\ge1-\frac{\delta}{2}
    \]
    thanks to Proposition \ref{prob.conv} because $m\ge\tilde M_{\frac{\varepsilon}{4\|\Sigma^{-1}\|^2\Lambda\sqrt D},\frac{\delta}{2}}$.
\end{proof}
The fact that $K_{n,\Omega}$ converges to zero in probability as $m\to+\infty$ establishes the quasi-optimality of our method. This can be achieved by requiring the subspace condition $\mathcal W\subseteq\mathcal S$ and the sampling system to be orthonormal. 
\begin{proof}[Proof of Corollary \ref{half.ONB.case}]
     Under the orthonormality assumption of $\{s_k\}_{k\in\N}$, the corresponding synthesis operator $S$ becomes an isometry. It follows that $C=\iota_n^*U^*S^*P_{\mathcal W_n^\perp}=\iota_n^*W^*SS^*P_{\mathcal W_n^\perp}=\iota_n^*W^*P_{\mathcal S}P_{\mathcal W_n^\perp}$. Observe that $\ker(\iota_n^*W^*)=\ran(W\iota_n)^\perp=\mathcal W_n^\perp$, hence $\iota_n^*W^*=\iota_n^*W^*P_{\mathcal W_n}$. But then $C=\iota_n^*W^*P_{\mathcal S}P_{\mathcal W_n^\perp}=\iota_n^*W^*P_{\mathcal W_n}P_{\mathcal S}P_{\mathcal W_n^\perp}$. Finally, the assumption $\mathcal W\subseteq\mathcal S$ forces $\mathcal S^\perp\subseteq\mathcal W^\perp\subseteq\mathcal W_n^\perp$ and consequently $P_{\mathcal W_n}P_{\mathcal S}=P_{\mathcal W_n}-P_{\mathcal W_n}P_{\mathcal S^\perp}=P_{\mathcal W_n}$, and this allows us to conclude that $C=\iota_n^*W^*P_{\mathcal W_n}P_{\mathcal S}P_{\mathcal W_n^\perp}=\iota_n^*W^*P_{\mathcal W_n}P_{\mathcal W_n^\perp}=0$.
\end{proof}

\begin{proof}[Proof of Corollary \ref{cor:leverage+subspace.condition}]
    By Proposition \ref{unifbound.cond_number}, we have $\kappa(\Sigma)\le\frac{BD}{AC}$. The claim thus follows from Corollary \ref{leverage} and the subsequent remark.
\end{proof}

\subsection{Proofs for the Leverage-score Sampling}\label{subsec:proofs_leverage}

All the previous results in this section are valid for an arbitrary probability distribution $(p_j)_{j\in\N}$ that satisfies Assumption \ref{assumptions}: we are now going to implement our findings to a specific sampling strategy. Looking at the definition of $R$, one might think to use $p_j\propto\|v_j\|^2$ for each $j\in\N$. To this end, we need to compute the normalization constant, which turns out to be the trace of $\Sigma$.
\begin{prop}\label{trace}
    One has:
    \[
    \tr(\Sigma)=\sum_{j\in\N}\|v_j\|^2.
    \]
\end{prop}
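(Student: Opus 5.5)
The plan is to compute the trace directly in the canonical basis $\{\mathbf{e_1},\dots,\mathbf{e_n}\}$ of $\C^n$, using the factorization $\Sigma=(U\iota_n)^*U\iota_n$, and then swap the resulting double sum. Since $\Sigma$ is an $n\times n$ positive semidefinite matrix, write
\[
\tr(\Sigma)=\sum_{k=1}^n\<\Sigma\mathbf{e_k},\mathbf{e_k}\>=\sum_{k=1}^n\|U\iota_n\mathbf{e_k}\|_{\ell^2}^2 .
\]

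Next, expand each squared norm via Parseval in $\ell^2$ with the canonical basis $\{e_j\}_{j\in\N}$:
\[
\|U\iota_n\mathbf{e_k}\|^2=\sum_{j\in\N}|\<U\iota_n\mathbf{e_k},e_j\>|^2=\sum_{j\in\N}|\<\mathbf{e_k},\iota_n^*U^*e_j\>|^2=\sum_{j\in\N}|\<\mathbf{e_k},v_j\>|^2 .
\]
All terms are nonnegative, so Tonelli lets us interchange the finite sum over $k$ with the series over $j$:
\[
\tr(\Sigma)=\sum_{j\in\N}\sum_{k=1}^n|\<v_j,\mathbf{e_k}\>|^2=\sum_{j\in\N}\|v_j\|^2 .
\]

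As an alternative route, one can apply linearity of the trace to Lemma~\ref{repr.sigma}, using $\Sigma=\sum_j v_j\otimes v_j$ with norm convergence and $\tr(v_j\otimes v_j)=\|v_j\|^2$. The trace is continuous in the operator norm on $n\times n$ matrices, so it passes through the limit of partial sums. Either way there is no real obstacle. The only point requiring care is justifying the interchange of summation, which nonnegativity handles, or equivalently the continuity of the trace in finite dimensions. Note that finiteness is automatic here, since $\tr(\Sigma)\le n\|\Sigma\|<\infty$.
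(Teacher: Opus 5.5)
Your proof is correct and is essentially the paper's argument. The paper invokes cyclicity, $\tr((U\iota_n)^*U\iota_n)=\tr(U\iota_n(U\iota_n)^*)$, and evaluates the latter in the basis $\{e_j\}_{j\in\N}$ of $\ell^2$; you instead evaluate in the canonical basis of $\C^n$ and swap the double sum by Tonelli. That swap is exactly what justifies the cyclicity step, since both arguments compute the Hilbert--Schmidt norm of $U\iota_n$ in two bases. Your alternative via Lemma~\ref{repr.sigma} and continuity of the trace on $n\times n$ matrices is also valid and non-circular, because that lemma does not rely on this proposition.
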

\begin{proof}
    By the cyclic property of the trace, the trace of $\Sigma$ can be rewritten as
    \[
    \tr(\Sigma)=\tr(\iota_n^*U^*U\iota_n)=\tr(U\iota_n\iota_n^*U^*),
    \]
    which in turn is equal to
    \begin{equation*}
\tr(U\iota_n\iota_n^*U^*)=\sum_{j\in\N}\<U\iota_n\iota_n^*U^*e_j,e_j\> =\sum_{j\in\N}\<\iota_n^*U^*e_j,\iota_n^*U^*e_j\>=\sum_{j\in\N}\<v_j,v_j\> =\sum_{j\in\N}\|v_j\|^2. \qedhere
    \end{equation*}
\end{proof}
It's not a coincidence that leverage-score sampling is the optimal sampling strategy, given the fact that (in the orthonormal setting) it coincides with the so-called Christoffel sampling strategy. The following Proposition allows us to see why this is the case.
\begin{prop}\label{christoffel.onb}
    Let $(\mathcal D,\rho)$ be a measure space and let $\mathcal P\subseteq L^2(\mathcal D,\rho)$ be an $n$-dimensional subspace where pointwise evaluation is defined. Define $\mathcal K_\mathcal P\colon\mathcal D\longrightarrow[0,+\infty)$ by letting $\mathcal K_\mathcal P(x):=\|ev_x\|_{\mathcal P^*}^2$, where $ev_x\colon p\in\mathcal P\mapsto p(x)\in\C$. If $\{\phi_1,\dots,\phi_n\}$ is an orthonormal basis for $\mathcal P$, then for every $x\in\mathcal D$
    \[
    \mathcal K_\mathcal P(x)=\sum_{k=1}^n|\phi_k(x)|^2.
    \]
\end{prop}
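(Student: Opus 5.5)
The plan is to identify $\mathcal K_\mathcal P(x)$ as the squared norm of the Riesz representer of $ev_x$ in $\mathcal P$, and then expand that representer in the orthonormal basis $\{\phi_1,\dots,\phi_n\}$. This mirrors the computation carried out for the discrete frame-to-Riesz case just before the statement.

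Fix $x\in\mathcal D$. Since $\mathcal P$ is finite dimensional, $ev_x$ is a bounded linear functional on $\mathcal P$, equipped with the $L^2(\mathcal D,\rho)$ inner product. By the Riesz Representation Theorem there is a unique $q_x\in\mathcal P$ with $\<p,q_x\>=p(x)$ for every $p\in\mathcal P$. Moreover, $\|ev_x\|_{\mathcal P^*}=\|q_x\|$, so that $\mathcal K_\mathcal P(x)=\|q_x\|^2$.

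To compute $q_x$, expand it in the orthonormal basis:
\[
q_x=\sum_{k=1}^n\<q_x,\phi_k\>\phi_k.
\]
Each coefficient satisfies $\<q_x,\phi_k\>=\overline{\<\phi_k,q_x\>}=\overline{\phi_k(x)}$, by the reproducing property applied to $p=\phi_k$. Hence
\[
q_x=\sum_{k=1}^n\overline{\phi_k(x)}\,\phi_k,
\]
and Parseval's identity in $\mathcal P$ gives
\[
\|q_x\|^2=\sum_{k=1}^n|\phi_k(x)|^2.
\]

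As an alternative that avoids the representer, one can compute the norm directly. Write $p=\sum_k c_k\phi_k$, so that $\|p\|=\|c\|_{\C^n}$ and $p(x)=\sum_k c_k\phi_k(x)$. Cauchy--Schwarz gives $|p(x)|\le\|p\|\bigl(\sum_k|\phi_k(x)|^2\bigr)^{1/2}$, and equality is attained by choosing $c_k=\overline{\phi_k(x)}$.

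There is no real obstacle in this argument. The only point requiring care is the convention for the inner product, that is, in which slot the conjugate appears. This affects whether the representer carries $\overline{\phi_k(x)}$ or $\phi_k(x)$ as coefficients, but it does not affect the squared moduli, so the final formula is the same either way.
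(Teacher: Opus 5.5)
Your proof is correct and takes essentially the same route as the paper: you identify $\mathcal K_\mathcal P(x)$ with $\|q_x\|^2$ for the Riesz representer $q_x$ of $ev_x$, and you show that $q_x=\sum_{k=1}^n\overline{\phi_k(x)}\,\phi_k$. The differences are only cosmetic. The paper guesses this representer, verifies the reproducing property, and then evaluates $\|q_x\|^2=\<q_x,q_x\>=q_x(x)$, whereas you derive the coefficients from the orthonormal expansion and finish with Parseval.
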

\begin{proof}
    Since $\mathcal P$ endowed with the scalar product of $L^2(\mathcal D,\rho)$ is a Hilbert space (where completeness descends from finite dimensionality), there exists a unique $q_x\in\mathcal P$ such that for every $p\in\mathcal P$ one has $p(x)=\<p,q_x\>_{L^2(\mathcal D,\rho)}$. Moreover, one has $\mathcal K_{\mathcal P}(x)=\|ev_x\|_{\mathcal P^*}^2=\|q_x\|_{L^2(\mathcal D,\rho)}^2$. We can easily check that $q_x=\sum_{k=1}^m\overline{\phi_k(x)}\phi_k\in\mathcal P$ is the element we are looking for: indeed, for every $p\in\mathcal P$
    \[
    \<p,q_x\>_{L^2(\mathcal D,\rho)}=\sum_{k=1}^m\phi_k(x)\<p,\phi_k\>_{L^2(\mathcal D,\rho)}=\left(\sum_{k=1}^m\phi_k\<p,\phi_k\>_{L^2(\mathcal D,\rho)}\right)(x)=p(x),
    \]
    meaning that
    \[
    \mathcal K_{\mathcal P}(x)=\|q_x\|_{L^2(\mathcal D,\rho)}^2=\<q_x,q_x\>_{L^2(\mathcal D,\rho)}=q_x(x)=\sum_{k=1}^m\overline{\phi_k(x)}\phi_k(x)=\sum_{k=1}^m|\phi_k(x)|^2. \qedhere
    \]
\end{proof}
The above result allows us to find a closed form for the Christoffel function in the orthonormal framework. Indeed, we deduce that $\mathcal K_\mathcal P(j)=\|v_j\|^2$ for each $j\in\N$.
\begin{proof}[Proof of Lemma \ref{christoffel.function}]
    Let $\{\mathbf{e_1},\dots,\mathbf{e_n}\}$ and $\{e_j\}_{j\in \N}$ be the canonical bases of $\C^n$ and $\ell^2$, respectively. Since $U\iota_n$ is an isometry, we have that $\{Ue_1,\dots,Ue_n\}$ is an orthonormal basis for $\mathcal P$. Thanks to Proposition \ref{christoffel.onb}, for every $j\in\N$ we have
    \begin{align*}
        \mathcal K_\mathcal P(j)=\sum_{k=1}^n|\<Ue_k,e_j\>|^2=\sum_{k=1}^n|\<U^*e_j,e_k\>|^2.
    \end{align*}
    Notice that $\iota_n\mathbf{e_k}=e_k$ whenever $k\in\{1,\dots,n\}$ and zero otherwise. This allows us to write
    \[
    \mathcal K_\mathcal P(j)=\sum_{k=1}^n|\<U^*e_j,\iota_n\mathbf{e_k}\>|^2=\sum_{k=1}^n|\<\iota_n^*U^*e_j,\mathbf{e_k}\>|^2=\sum_{k=1}^n|\<v_j,\mathbf{e_k}\>|^2=\|v_j\|^2,
    \]
    hence
    \[
    \kappa_w(\mathcal P)=\sup_{j\in\N}|w(j)\mathcal K_\mathcal P(j)|=\sup_{j\in\N}\frac{\|v_j\|^2}{p_j}=R. \qedhere
    \]
\end{proof}

\subsection{Proofs for the Frames Extension}\label{subsec:proofs_frames}

In the full-frame setting, we lose the invertibility of $\Sigma$. In order to recover both the reconstruction error estimate and the concentration of $K_{n,\Omega}$ from the Riesz scenario, we need the high-probability eventual-range stability of $\randgram$.

\begin{proof}[Proof of Proposition \ref{rank.stability}]
    Start by noticing that
    \[
    \ran(\Sigma)=\ran(\iota_n^*W^*SS^*W\iota_n)\subseteq \ran(\iota_n^*W^*)=\ran(\iota_n^*W^*W\iota_n),
    \]
    where the last equality follows from Lemma \ref{kerA*A}. Moreover, from the fact that $\ker(S^*)=\ran(S)^\perp=\mathcal S^\perp$ in conjunction with $\mathcal W_n\cap\mathcal S^\perp\subseteq\mathcal W\cap\mathcal S^\perp\subseteq\mathcal S\cap\mathcal S^\perp=\{0\}$ we deduce that $S^*_{|\mathcal W_n}$ is injective. It follows that $\dim(S^*(\mathcal W_n))=\dim(\mathcal W_n)$, but $\mathcal W_n=\ran(W\iota_n)\cong \ran(\iota_n^*W^*W\iota_n)$ and $S^*(\mathcal W_n)=S^*(\ran(W\iota_n))=\ran(S^*W\iota_n)=\ran(U\iota_n)\cong \ran(\iota_n^*U^*U\iota_n)=\ran(\Sigma)$, where both isomorphism are due to Lemma \ref{kerA*A}. We have thus proved that $\dim(\ran(\Sigma))=\dim(\ran(\iota_n^*W^*W\iota_n))$, but since $\ran(\Sigma)\subseteq \ran(\iota_n^*W^*W\iota_n)$ we actually conclude $\ran(\Sigma)=\ran(\iota_n^*W^*W\iota_n)$\\
    As for the second part of the statement, one inclusion is once again easy to see and holds for every $m\in\N$ with probability $1$:
    \[
    \ran(\randgram)=\ran(\iota_n^*U^*Q_\Omega U\iota_n)\subseteq \ran(\iota_n^*U^*)=\ran(\Sigma).
    \]
    Now, let $\delta\in(0,1)$ and $m\ge M_\delta$: thanks to Proposition \ref{lsc.rank}, we have
    \[
    \left\{\|\randgram-\Sigma\|<\lambda_0\right\}\subseteq\left\{\dim(\ran(\randgram))\ge\dim(\ran(\Sigma))\right\},
    \]
    allowing us to conclude that
    \begin{align*}
        \P\left(\ran(\randgram)=\ran(\Sigma)\right)&=\P\left(\left\{\ran(\randgram)\subseteq \ran(\Sigma)\right\}\cap\left\{\dim(\ran(\randgram))\ge\dim(\ran(\Sigma))\right\}\right)\\
        &=\P\left(\dim(\ran(\randgram))\ge\dim(\ran(\Sigma))\right)\\
        &\ge\P\left(\|\randgram-\Sigma\|<\lambda_0\right)\ge1-\delta,
    \end{align*}
     where the last inequality is due to Proposition \ref{prob.conv} (recall that this proposition does not rely on $\{w_k\}_{k\in\N}$ being a Riesz basis).
\end{proof}

We are now ready to prove the main result in the case where the sensing system is an arbitrary frame.
\begin{proof}[Proof of Theorem \ref{main+frames}]
    Fix $\delta\in(0,1)$ and define $\tilde f$ as in \eqref{tilde.f.frames}. The proof proceeds, mutatis mutandis, like the proof of Theorem \ref{mainresult}. Decompose $f=P_{\mathcal W_n}f+P_{\mathcal W_n^\perp}f$.
    \begin{align*}
        \tilde f=W\iota_n\randgram^\dagger\iota_n^*U^*Q_\Omega S^*P_{\mathcal W_n}f+W\iota_n\randgram^\dagger\widehat C_\Omega f.
    \end{align*}
    Focus on the first summand $\tilde f_1:=W\iota_n\randgram^\dagger\iota_n^*U^*Q_\Omega S^*P_{\mathcal W_n}f$. Now, $W\iota_n$ is not necessarily injective here, but we still have the property $W\iota_n(\iota_n^*W^*W\iota_n)^\dagger\iota_n^*W^*=P_{\mathcal W_n}$:
    \[
    f_1=W\iota_n\randgram^\dagger\iota_n^*U^*Q_\Omega S^*W\iota_n(\iota_n^*W^*W\iota_n)^\dagger\iota_n^*W^*f=W\iota_n\randgram^\dagger\randgram(\iota_n^*W^*W\iota_n)^\dagger\iota_n^*W^*f.
    \]
    At this point we cannot cancel out $\randgram^\dagger$ and $\randgram$ as we did in the frame-to-Riesz case: instead, we must use $\randgram^\dagger\randgram=P_{\ran(\randgram^*)}=P_{\ran(\randgram)}$ to obtain
    \[
    \tilde f_1=W\iota_nP_{\ran(\randgram)}(\iota_n^*W^*W\iota_n)^\dagger\iota_n^*W^*f.
    \]
    We know from Proposition \ref{rank.stability} that for every $m\ge M_\delta$ we have $P_{\ran(\randgram)}=P_{\ran(\iota_n^*W^*W\iota_n)}$ with probability at least $1-\delta$, and every logical implication that will follow from this will be true on a set of at least this probability. In particular,
    \[
    \tilde f_1=W\iota_n(\iota_n^*W^*W\iota_n)^\dagger\iota_n^*W^*f=P_{\mathcal W_n}f,
    \]
    hence
    \[
    \tilde f=P_{\mathcal W_n}f+W\iota_n\randgram^\dagger\widehat C_\Omega f.
    \]
    It follows that
    \[
    f-\tilde f=P_{\mathcal W_n^\perp}f-W\iota_n\randgram^\dagger\widehat C_\Omega f.
    \]
    To conclude, by the Pythagorean theorem,
    \begin{align*}
        \|f-\tilde f\|&=(\|P_{\mathcal W_n^\perp}f\|^2+\|W\iota_n\randgram^\dagger\widehat C_\Omega f\|^2)^{\frac{1}{2}}\\
        &\le\|P_{\mathcal W_n^\perp}f\|(1+K_{n,\Omega}^2)^{\frac{1}{2}}.\qedhere
    \end{align*}
\end{proof}
We follow up with the concentration inequality for the factor $K_{n,\Omega}$ that is analogous to the one we deduced in the frame-to-Riesz case. The proof is very similar, although one must be careful with the $\varepsilon$-dependencies, the event inclusions and the crucial role played by the eventual range stability.
\begin{proof}[Proof of Theorem \ref{K_n,omega+frames}]
    The proof proceeds almost exactly like the one for Theorem \ref{K_n,omega}, except that every instance of operator inversion is replaced with pseudo-inversion. We stress that instead of using Lemma \ref{cont.inv} to infer
    \[
    E_1\supseteq\left\{\|\randgram-\Sigma\|<\frac{\varepsilon}{4\|\Sigma^{-1}\|^2\Lambda\sqrt D}\right\},
    \]
    we resort to Lemma \ref{cont.pseudo-inv}, which states that, for the modified event $E_1^\dagger=\left\{\|\randgram^\dagger-\Sigma^\dagger\|<\frac{\varepsilon}{\Lambda\sqrt D}\right\}$, we have
    \begin{align}\label{fake.intersect}
        E_1^\dagger\supseteq\left\{\|\randgram-\Sigma\|<\frac{\varepsilon}{4\|\Sigma^\dagger\|^2\Lambda\sqrt D}\right\}\cap\left\{\ran(\randgram)=\ran(\Sigma)\right\}.
    \end{align}
However, we saw in the proof of Proposition \ref{rank.stability} that 
    \[
    \left\{\|\randgram-\Sigma\|<\lambda_0\right\}\subseteq\left\{\ran(\randgram)=\ran(\Sigma)\right\},
    \]
    and since $\frac{\varepsilon}{4\|\Sigma^\dagger\|^2\Lambda\sqrt D}\le\lambda_0=\frac{1}{\|\Sigma^\dagger\|}$ (indeed, $\varepsilon<2\|\Sigma^\dagger\|\Lambda\sqrt D$ by assumption), the intersection in \eqref{fake.intersect} only amounts to $\left\{\|\randgram-\Sigma\|<\frac{\varepsilon}{4\|\Sigma^\dagger\|^2\Lambda\sqrt D}\right\}$, and the proof can now be finished exactly like the proof of Theorem \ref{K_n,omega}.
\end{proof}

\subsection{Proof of the Stable Recovery of Analytic Functions}
We now focus on the proof of the recovery of analytic functions from Fourier measurements. 
\begin{proof}[Proof of Theorem \ref{analytic}]
    Since both the Fourier basis and the Legendre polynomial basis are orthonormal, both the sampling rate $m\ge\frac{8}{3}n\log(2n/\delta)$ and the existence and uniqueness of the solution to the least-squares problem follow from Corollary \ref{cor:onb_nlogn} if the leverage-score distribution is implemented. We now wish to evaluate the square norm of $v_l=\iota_n^*U^*e_l=\iota_n^*W^*s_l$ for each $l\in\N$ in this Fourier-Legendre setting. Since
    \[
    W^*s_l=(\<s_l,w_k\>)_{k\in\N},
    \]
    we have that $v_l=(\<s_l,w_k\>)_{k=1}^n\in\C^n$, hence
    \[
    \|v_l\|^2=\sum_{k=1}^n|\<s_l,w_k\>|^2=\sum_{k=1}^n|\widehat w_k(\sigma(l))|^2=\sum_{k=0}^{n-1}|\widehat w_{k+1}(\sigma(l))|^2.
    \]
    The Fourier coefficients of the Legendre polynomials are known to have a closed form in terms of the spherical Bessel function of first kind \cite[Formula 10.1.14]{abramowitz1948handbook}:
    \[
    \widehat w_{k+1}(\sigma(l))=i^k\sqrt{2k+1}\mathcal\,\,j_k(-\pi\sigma(l)).
    \]
    Ultimately, we obtain
    \[
    p_l=\frac{\|v_l\|^2}{n}=\frac{1}{n}\sum_{k=0}^{n-1}(2k+1)\left|j_k(-\sigma(l)\pi)\right|^2.
    \]
    We also deduce the error estimate $\|f-\tilde f\|\lesssim\|P_{\mathcal W_n^\perp}f\|(1+K_{n,\Omega}^2)^{\frac{1}{2}}$. To conclude the proof, the only thing left to prove is that there exists $\rho>1$ such that $\|P_{\mathcal W_n^\perp}f\|_{L^2([-1,1])}\lesssim\sqrt n\rho^{-n}$. Since the normalized Legendre polynomials form an orthonormal basis of $L^2([-1,1])$, we have
    \[
    \|P_{\mathcal W_n^\perp}f\|_{L^2([-1,1])}^2=\sum_{k=n+1}^{+\infty}|\<f,w_k\>_{L^2([-1,1])}|^2.
    \]
    Let $P_k:=(k+\frac{1}{2})^{-\frac{1}{2}}w_{k+1}$ be the (unnormalized) $k$-th Legendre polynomial. Since $f$ is analytic on $[-1,1]$, there exists $\rho>1$ such that $f$ is holomorphic on the interior of the ellipse
    \[
    \mathcal E_\rho:=\left\{\frac{z+z^{-1}}{2}\st|z|=\rho\right\},
    \]
    that is, the ellipse on the complex plane centered at zero whose axes have length $\rho+\rho^{-1}$ and $\rho-\rho^{-1}$ and whose foci are located at $\{-1,1\}$ (such curves are called Bernstein ellipses). Then it's known \cite{davis1975interpolation} that $|\<f,P_k\>_{L^2([-1,1])}|\le c\rho^{-k}$ for some constant $c>0$ that depends upon $\rho$ but not on $k$. We thus get
    \begin{align*}
        \sum_{k=n+1}^{+\infty}|\<f,w_k\>_{L^2([-1,1])}|^2&=\sum_{k=n}^{+\infty}\left(k+\frac{1}{2}\right)|\<f,P_k\>_{L^2([-1,1])}|^2\\
        &\le c^2\sum_{k=n}^{+\infty}\left(k+\frac{1}{2}\right)\rho^{-2k}\\
        &\le 2c^2\sum_{k=n}^{+\infty}k\rho^{-2k}\\
        &=2c^2\rho^{-2n}\frac{n-n\rho^{-2}+\rho^{-2}}{(1-\rho^{-2})^2}\\
        &\le\frac{2c^2}{(1-\rho^{-2})^2}n\rho^{-2n}.
    \end{align*}
    Ultimately, we deduce that 
    \[
    \|P_{\mathcal W_n^\perp}f\|_{L^2([-1,1])}\le C\sqrt n\rho^{-n}. \qedhere
    \]
\end{proof}

\section{Conclusions}\label{sec:conclusions}

This work has introduced a fully stochastic approach to generalized sampling. We have demonstrated that, by employing a suitable randomized sampling strategy, stable recovery is guaranteed with high probability at a  near linear sample complexity, even when operating with highly redundant sensing systems such as frames. Establishing these guarantees required us to derive sharp concentration estimates for both the empirical Gram matrix and the empirical cross-term operator, culminating in a new variant of Bernstein's inequality for operators acting between potentially distinct, infinite-dimensional Hilbert spaces. Furthermore, we provided a comprehensive comparative analysis between leverage-score sampling and Christoffel sampling within this abstract frame setting. Finally, we showcased the practical viability of this machinery through the Fourier-Legendre approximation problem, where the stochastic framework natively overcomes deterministic ill-conditioning to achieve near-exponential convergence rates.

While this framework significantly advances the stability and efficiency of infinite-dimensional inverse problems, it is important to delineate its domain of applicability. First, the universality of the $m \gtrsim n \log n$ sampling rate relies on the geometric inclusion $\mathcal W\subseteq\mathcal S$. Second, the logarithmic oversampling factor is an intrinsic feature of independent random sampling. Achieving a strictly linear rate would require departing from i.i.d. extractions—for instance, by employing greedy selection methods \cite{dolbeault2024randomized} or determinantal point processes \cite{determinantal}. However, the former incurs higher computational costs, while the latter generally requires a reproducing kernel Hilbert space structure, a restriction our abstract operator framework was deliberately designed to bypass. We also emphasize that leverage-score sampling is a cure for basis mismatch and geometric redundancy, not for intrinsic ill-posedness; if the underlying physical problem is severely ill-posed, the condition number of the Gram matrix will naturally reflect this sensitivity. Finally, the current theory is developed for linear inverse problems governed by well-defined frame bounds.

These considerations naturally suggest new directions for future research. A primary open theoretical question is whether the log-linear sampling rate can be universally achieved without the subspace condition $\mathcal W\subseteq\mathcal S$. Furthermore, extending this stochastic GS framework to non-linear inverse problems represents a challenging but highly relevant direction. Within the context of linear problems, another promising path involves generalizing the nature of the measurements themselves. Instead of restricting the analysis to continuous linear functionals defined on the entire Hilbert space $\mathcal H$, one could adapt the framework to handle ``indirect data'' \cite{adcock2020frames}—that is, continuous linear functionals $\ell\colon G\rightarrow\C$ defined only on a proper, dense subspace $G\subseteq\mathcal H$. Because such measurements cannot be globally represented by standard inner products, this extension would encompass crucial real-world scenarios, such as the pointwise evaluation of continuous functions in $L^2$ when a reproducing kernel structure is unavailable.

\section*{Acknowledgements}

This material is based upon work supported by the Air Force Office of Scientific Research under award number FA8655-23-1-7083. The research was supported in part by the MIUR Excellence Department Project awarded to Dipartimento di Matematica, Università di Genova, CUP D33C23001110001.

\bibliography{biblio}
\bibliographystyle{plain}

\appendix

\section{Technical Lemmas}
\label{sec:appendix}

In this section, we collect several auxiliary results from functional analysis and operator theory. Although their underlying principles are well known, we state and prove them here in the precise formulations required for our subsequent analysis.

\begin{lemma}\label{kerA*A}
    Let $\mathcal H,\mathcal K$ be Hilbert spaces and let $A\colon\mathcal H\longrightarrow\mathcal K$ be a linear operator. Then $\ker(A^*A)=\ker(A)$ and $\overline{\ran(A^*)}=\overline{\ran(A^*A)}$. In particular, if $A$ is bounded and has closed range, then $\ran(A^*)=\ran(A^*A)\cong \ran(A)$.
\end{lemma}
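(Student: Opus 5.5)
The plan is to prove the kernel identity directly, then get the range identity by taking orthogonal complements, and finally use closed range to drop the closures and produce the isomorphism with $\ran(A)$.

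\emph{Kernel identity.} The inclusion $\ker(A)\subseteq\ker(A^*A)$ is immediate. Conversely, if $A^*Ax=0$, then
\[
\|Ax\|^2=\<A^*Ax,x\>=0,
\]
so $Ax=0$. This step needs $A$ to be densely defined with an adjoint; in every application in the paper $A$ is bounded, and I will treat that case.

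\emph{Closure of the ranges.} I will use the standard relation $\overline{\ran(B)}=\ker(B^*)^\perp$ for bounded $B$. Taking $B=A^*$ gives $\overline{\ran(A^*)}=\ker(A)^\perp$. Taking $B=A^*A$, which is self-adjoint, gives $\overline{\ran(A^*A)}=\ker(A^*A)^\perp$. The kernel identity then yields $\overline{\ran(A^*)}=\overline{\ran(A^*A)}$.

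\emph{Closed-range case.} This is the main step, since we must show the ranges themselves are closed and equal. If $\ran(A)$ is closed, the closed range theorem makes $\ran(A^*)$ closed, so $\ran(A^*)=\ker(A)^\perp$.

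To see that $\ran(A^*A)\supseteq\ran(A^*)$, write any element of $\ran(A^*)$ as $A^*y$. Decompose $y=y_1+y_2$ with $y_1\in\ran(A)$ and $y_2\in\ran(A)^\perp=\ker(A^*)$. Then $A^*y=A^*y_1$, and $y_1=Ax$ for some $x$ because $\ran(A)$ is closed. Hence $A^*y=A^*Ax\in\ran(A^*A)$. The reverse inclusion $\ran(A^*A)\subseteq\ran(A^*)$ is trivial, so $\ran(A^*)=\ran(A^*A)$.

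\emph{The isomorphism.} The restriction $A|_{\ker(A)^\perp}\colon\ker(A)^\perp\to\ran(A)$ is a bounded bijection between Hilbert spaces. By the open mapping theorem it is a topological isomorphism. Since $\ker(A)^\perp=\ran(A^*)=\ran(A^*A)$, this gives $\ran(A^*A)\cong\ran(A)$.

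In the finite-dimensional uses in the paper, such as $A=U\iota_n$, all ranges are automatically closed and the isomorphism reduces to the equality of dimensions. That dimension statement is what Propositions \ref{sigma.invertible} and \ref{rank.stability} actually use.
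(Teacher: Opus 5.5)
Your proof is correct. For the kernel identity and the equality of closures you argue as the paper does: $\|Ax\|^2=\<A^*Ax,x\>$, then orthogonal complements using the self-adjointness of $A^*A$.

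The closed-range step is where you differ. The paper settles it in one line, $\ran(A^*)=\ran(A^*A)\cong\mathcal H/\ker(A^*A)=\mathcal H/\ker(A)\cong\ran(A)$, attributed to the first isomorphism theorem. That gives the isomorphisms, but it does not by itself explain why the ranges themselves, rather than only their closures, coincide. You supply that piece. The closed range theorem makes $\ran(A^*)$ closed, and the decomposition $y=y_1+y_2$ with $y_1\in\ran(A)$ and $y_2\in\ker(A^*)$ shows directly that every $A^*y$ equals some $A^*Ax$.

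Your isomorphism, built from the restriction of $A$ to $\ker(A)^\perp$ and the open mapping theorem, is the Hilbert-space counterpart of the paper's quotient $\mathcal H/\ker(A)$. It gives a topological isomorphism rather than a merely algebraic one. The paper's version is shorter and suffices for its actual uses, which are dimension counts for finite-rank operators such as $U\iota_n$ and $W\iota_n$. Yours actually proves the general closed-range claim as stated.

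Restricting the first part to bounded $A$ is harmless. The paper's own argument also tacitly needs $A^*$ to exist and $A^*A$ to be self-adjoint.
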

\begin{proof}
    Clearly, if $Ax=0$, then $A^*Ax=0$: this proves that $\ker(A)\subseteq \ker(A^*A)$. Conversely, assume that $x\in \ker(A^*A)$. Then
    \[
    0=\<A^*Ax,x\>=\<Ax,Ax\>=\|Ax\|^2,
    \]
    which means that $Ax=0$ and proves $\ker(A)=\ker(A^*A)$. From this we immediately deduce
    \[
    \overline{\ran(A^*A)}=(\ran(A^*A)^\perp)^\perp=\ker(A^*A)^\perp=\ker(A)^\perp=\overline{\ran(A^*)}.
    \]
    If we further assume that $A$ is bounded and has closed range, then the first isomorphism theorem grants
    \begin{equation*}
    \ran(A^*)=\ran(A^*A)\cong\mathcal H\diagup \ker(A^*A)=\mathcal H\diagup \ker(A)\cong \ran(A). \qedhere
    \end{equation*}
\end{proof}

\begin{lemma}\label{below.bound.adjoint}
    Let $T\colon\mathcal H\longrightarrow\mathcal K$ be a bounded, injective operator with closed range between Hilbert spaces and let $c>0$ such that $\|Th\|\ge c\|h\|$ for every $h\in\mathcal H$. Then $\|T^*Th\|\ge c\|Th\|$ for every $h\in\mathcal H$.
\end{lemma}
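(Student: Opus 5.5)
The plan is to reduce the claim to Lemma \ref{kerA*A} and a short chain of inequalities through $T^*$. Since $T$ is bounded, injective, with closed range, $\ran(T)$ is a closed subspace of $\mathcal K$. The key observation is that $T^*$ restricted to $\ran(T)$ is bounded below: for $y\in\ran(T)$ we want $\|T^*y\|\ge c\|y\|$. Once we have this, applying it with $y=Th$ gives $\|T^*Th\|\ge c\|Th\|$, which is the statement.

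To prove the lower bound for $T^*$ on $\ran(T)$, I would use duality. Fix $y=Th_0\in\ran(T)$ with $y\neq0$. Then
\[
\|T^*y\|\,\|h_0\|\ge|\<T^*y,h_0\>|=|\<y,Th_0\>|=\|y\|^2 .
\]
By hypothesis, $\|h_0\|\le c^{-1}\|Th_0\|=c^{-1}\|y\|$. Substituting, we get $\|T^*y\|\,c^{-1}\|y\|\ge\|y\|^2$, that is $\|T^*y\|\ge c\|y\|$. With $y=Th$ this yields exactly $\|T^*Th\|\ge c\|Th\|$. The case $Th=0$ is trivial.

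This argument does not even need closedness of the range or Lemma \ref{kerA*A}. Injectivity is also used only implicitly, through the lower bound $\|Th\|\ge c\|h\|$, which already forces it. So the hypotheses are more than enough.

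An alternative route, should the duality trick feel too slick, is spectral. Write $\|T^*Th\|\,\|h\|\ge\<T^*Th,h\>=\|Th\|^2$. Then bound $\|h\|\le c^{-1}\|Th\|$ to conclude $\|T^*Th\|\ge c\|Th\|$ directly. This is essentially the same computation with $h_0=h$, and it is the version I would actually write.

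There is no real obstacle. The only care point is handling $Th=0$ and making sure the division by $\|Th\|$ is legitimate.
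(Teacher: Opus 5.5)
Your proposal is correct and is essentially the paper's proof: both rest on $\|T^*Th\|\,\|h\|\ge\langle T^*Th,h\rangle=\|Th\|^2$ combined with $c\|h\|\le\|Th\|$. The only cosmetic difference is that you bound $\|h\|$ from above and divide by $\|Th\|$, whereas the paper writes $\|Th\|^2\ge c\|h\|\,\|Th\|$ and divides by $\|h\|$. You are also right that neither closedness of the range nor Lemma~\ref{kerA*A} is actually needed.
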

\begin{proof}
    Let $h\in\mathcal H$. We can assume $h\neq0$, otherwise the claim is trivial. Then
    \[
    \|T^*Th\|\|h\|\ge|\<T^*Th,h\>|=\|Th\|^2\ge c\|h\|\|Th\|.
    \]
    The claim follows by dividing both sides by $\|h\|$.
\end{proof}

\begin{lemma}\label{norm.of.inverse}
    Let $X$ be a Banach space. Then the set of bounded bijective linear operators is open in $\mathcal B(X)$. More precisely, let $A,B\in\mathcal B(X)$ and suppose that $A$ is bijective. If $\|B-A\|<\|A^{-1}\|^{-1}$, then $B$ is bijective and $\|B^{-1}\|\le\frac{1}{\|A^{-1}\|^{-1}-\|B-A\|}$.
\end{lemma}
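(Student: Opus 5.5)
The plan is the classical Neumann series argument. We write $B = A - (A-B) = A\bigl(I - A^{-1}(A-B)\bigr)$ and set $E := A^{-1}(A-B)\in\mathcal B(X)$. By submultiplicativity,
\[
\|E\|\le\|A^{-1}\|\|A-B\|<1,
\]
using the hypothesis $\|B-A\|<\|A^{-1}\|^{-1}$. The first key step is then to show that $I-E$ is bijective with bounded inverse given by the Neumann series $\sum_{k\ge0}E^k$. This series converges absolutely in $\mathcal B(X)$ because $\sum_k\|E\|^k<\infty$ and $\mathcal B(X)$ is complete, $X$ being Banach. The telescoping identities
\[
(I-E)\sum_{k=0}^N E^k = I - E^{N+1} = \Bigl(\sum_{k=0}^N E^k\Bigr)(I-E),
\]
together with $\|E^{N+1}\|\to0$, show that the limit is a two-sided inverse. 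This also gives $\|(I-E)^{-1}\|\le\frac{1}{1-\|E\|}$.

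Next we conclude that $B=A(I-E)$ is a composition of bounded bijections, hence bijective, with $B^{-1}=(I-E)^{-1}A^{-1}$. For the norm bound, the crude estimate $\|B^{-1}\|\le\frac{\|A^{-1}\|}{1-\|A^{-1}\|\|A-B\|}$ already suffices. Dividing numerator and denominator by $\|A^{-1}\|$ yields exactly
\[
\|B^{-1}\|\le\frac{1}{\|A^{-1}\|^{-1}-\|B-A\|},
\]
so no sharper control of $\|E\|$ is needed. The openness statement follows immediately, since the ball of radius $\|A^{-1}\|^{-1}$ around any invertible $A$ consists of invertible operators.

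There is no real obstacle. The only point requiring care is convergence of the Neumann series, which uses completeness of $\mathcal B(X)$ inherited from $X$. One should also note that bounded bijectivity of $B$ comes from the explicit bounded inverse, so the open mapping theorem is not needed.
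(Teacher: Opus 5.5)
Your proof is correct and follows the paper's argument essentially verbatim. Your $E=A^{-1}(A-B)$ is exactly the paper's $I-A^{-1}B$, and both proofs invert $A^{-1}B=I-E$ by the Neumann series, write $B=A(A^{-1}B)$, and bound $\|B^{-1}\|\le\|A^{-1}\|\sum_k\|E\|^k$; your telescoping check that the series is a two-sided inverse only makes explicit a step the paper leaves implicit.
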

\begin{proof}
    One has:
    \[
    \|I-A^{-1}B\|=\|A^{-1}(A-B)\|\le\|A^{-1}\|\|B-A\|<1.
    \]
    By virtue of the Neumann series criterion, $A^{-1}B$ is bijective and
    \[
    (A^{-1}B)^{-1}=\sum_{k=0}^{+\infty}(I-A^{-1}B)^k.
    \]
    It follows that $B=A(A^{-1}B)$ is also bijective and
    \begin{align*}
        \|B^{-1}\|=\|(A^{-1}B)^{-1}A^{-1}\|&\le\|(A^{-1}B)^{-1}\|\|A^{-1}\|\\
        &\le\|A^{-1}\|\sum_{k=0}^{+\infty}\|I-A^{-1}B\|^k\\
        &\le\|A^{-1}\|\sum_{k=0}^{+\infty}(\|A^{-1}\|\|B-A\|)^k\\
        &=\frac{\|A^{-1}\|}{1-\|A^{-1}\|\|B-A\|}\\
        &=\frac{1}{\|A^{-1}\|^{-1}-\|B-A\|}.\qedhere
    \end{align*}
\end{proof}

\begin{prop}\label{lsc.rank}
    Let $A\in\C^{n\times n}$ be self-adjoint and let $r:=\operatorname{rk}(A)$. Let $\lambda_r$ be the eigenvalue of $A$ with the smallest non-zero absolute value. Then for every $B\in\C^{n\times n}$ such that $\|B-A\|<|\lambda_r|$ one has $\operatorname{rk}(B)\ge r$.
\end{prop}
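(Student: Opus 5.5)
The plan is to use the spectral decomposition of $A$ together with a min–max (Courant–Fischer) argument; one could equally use Weyl's inequality for singular values. Since $A$ is self-adjoint, pick an orthonormal eigenbasis $\{x_1,\dots,x_n\}$ of $\C^n$ with $Ax_k=\lambda_k x_k$. Let $V:=\Span\{x_k\st\lambda_k\neq0\}$, which has dimension $r=\operatorname{rk}(A)$. For every $x\in V$ we have $\|Ax\|^2=\sum_{\lambda_k\neq0}|\lambda_k|^2|\<x,x_k\>|^2\ge|\lambda_r|^2\|x\|^2$, because $|\lambda_r|$ is the smallest non-zero absolute value among the eigenvalues.

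Next, show that $B$ is injective on $V$. For $x\in V$ with $x\neq0$,
\[
\|Bx\|\ge\|Ax\|-\|(B-A)x\|\ge(|\lambda_r|-\|B-A\|)\|x\|>0,
\]
using the hypothesis $\|B-A\|<|\lambda_r|$. Hence $\ker(B)\cap V=\{0\}$.

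Finally, apply rank–nullity. Since $\ker(B)\cap V=\{0\}$, we get $\dim\ker(B)+\dim V\le n$, i.e. $\dim\ker(B)\le n-r$. Therefore $\operatorname{rk}(B)=n-\dim\ker(B)\ge r$. Equivalently, $B|_V\colon V\to\C^n$ is injective, so $\dim B(V)=r$, and $B(V)\subseteq\ran(B)$ gives the same conclusion.

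There is no real obstacle here. The only points needing care are that $B$ need not be self-adjoint, which is why I bound $\|Bx\|$ directly instead of arguing with eigenvalues of $B$, and that the lower bound on $\|Ax\|$ uses orthogonality of the eigenvectors, which is where self-adjointness of $A$ enters. The case $r=0$ is trivial.
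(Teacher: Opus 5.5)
Your proof is correct and is essentially the paper's argument. Both show that $B$ is injective on the span $V$ of the eigenvectors of $A$ with non-zero eigenvalues, by combining $\|Ax\|\ge|\lambda_r|\|x\|$ on $V$ (which uses orthonormality of the eigenvectors) with $\|(B-A)x\|\le\|B-A\|\|x\|$; the paper phrases this as linear independence of $Bx_1,\dots,Bx_r$ via contradiction, whereas you use the reverse triangle inequality directly and then rank--nullity.
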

\begin{proof}
    Let $\sigma(A)=\{\lambda_1,\dots,\lambda_n\}\subseteq\R$ such that $|\lambda_1|\ge\dots\ge|\lambda_r|>\lambda_{r+1}=\dots=\lambda_n=0$. We can find an orthonormal system $\{x_1,...,x_r\}\subseteq\C^n$ of eigenvectors for $A$, where $Ax_j=\lambda_j x_j$ for every $j\in\{1,\dots,r\}$. Let $a_1,\dots,a_r\in\C$ such that $\sum_{j=1}^r a_jBx_j=0$. On one hand, we have:
    \begin{align*}
        \left\|\sum_{j=1}^r a_j Ax_j\right\|^2&=\left\|\sum_{j=1}^r a_j (A-B)x_j\right\|^2=\left\|(A-B)\left(\sum_{j=1}^r a_jx_j\right)\right\|^2\\
        &\le\|B-A\|^2\left\|\sum_{j=1}^r a_jx_j\right\|^2=\|B-A\|^2\sum_{j=1}^r|a_j|^2.
    \end{align*}
    If we were to assume $\sum_{j=1}^r|a_j|^2\neq0$, we would have
    \[
    \left\|\sum_{j=1}^r a_j Ax_j\right\|^2<|\lambda_r|^2\sum_{j=1}^r|a_j|^2,
    \]
    but also
    \begin{align*}
        \left\|\sum_{j=1}^r a_j Ax_j\right\|^2=\left\|\sum_{j=1}^r a_j\lambda_jx_j\right\|^2=\sum_{j=1}^r |a_j\lambda_j|^2\ge|\lambda_r|^2\sum_{j=1}^r|a_j|^2,
    \end{align*}
    a contradiction. It follows that $a_1,\dots,a_r=0$, hence $\{Bx_1,...,Bx_r\}$ is linearly independent, so that $\operatorname{rk}(B)\ge r$.
\end{proof}

\begin{lemma}[Continuity of operator composition]\label{cont.comp}
    Let $X,Y,Z$ be Banach spaces and let $B\in\mathcal B(X,Y),A\in\mathcal B(Y,Z)$. Then for every $\varepsilon>0$ and every $D\in\mathcal B(X,Y),C\in\mathcal B(Y,Z)$ satisfying
    \[
    \|C-A\|,\|D-B\|<\delta:=\min\left\{1,\frac{\varepsilon}{1+\|A\|+\|B\|}\right\}
    \]
    one has $\|CD-AB\|<\varepsilon.$
\end{lemma}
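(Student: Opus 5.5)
The plan is the standard add-and-subtract decomposition. For any $C\in\mathcal B(Y,Z)$ and $D\in\mathcal B(X,Y)$ we have
\[
CD-AB=(C-A)(D-B)+(C-A)B+A(D-B),
\]
and the triangle inequality together with submultiplicativity of the operator norm gives
\[
\|CD-AB\|\le\|C-A\|\,\|D-B\|+\|C-A\|\,\|B\|+\|A\|\,\|D-B\|.
\]
This splitting is preferable to $CD-AB=C(D-B)+(C-A)B$, because the latter would first require bounding $\|C\|$ by $\|A\|+\|C-A\|$. The three-term form keeps the dependence on the perturbations explicit and symmetric.

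Next I use the hypothesis $\|C-A\|,\|D-B\|<\delta$ with $\delta=\min\{1,\varepsilon/(1+\|A\|+\|B\|)\}$. Since $\delta\le1$, the product term satisfies
\[
\|C-A\|\,\|D-B\|<\delta\cdot\delta\le\delta.
\]
Each of the other two terms is strictly bounded by $\delta$ times $\|B\|$ or $\|A\|$, respectively. Summing the three bounds,
\[
\|CD-AB\|<\delta\,(1+\|A\|+\|B\|)\le\varepsilon,
\]
where the last inequality uses $\delta\le\varepsilon/(1+\|A\|+\|B\|)$.

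The only point needing care is strictness. The product term is strictly less than $\delta$ because both factors are strictly less than $\delta\le1$. The other terms may vanish, for instance when $A=0$ or $B=0$, but strict inequality still holds for the sum since the first term alone contributes a strict inequality. There is no genuine obstacle: the argument is purely algebraic, the Banach space setting plays no role beyond submultiplicativity of operator norms, and no completeness is used.
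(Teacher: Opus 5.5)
Your proposal is correct and follows essentially the same argument as the paper: the same three-term decomposition $CD-AB=(C-A)(D-B)+(C-A)B+A(D-B)$, followed by the bound $\delta^2\le\delta$ (using $\delta\le 1$) to reach $\|CD-AB\|<\delta(1+\|A\|+\|B\|)\le\varepsilon$. Your remark on strictness, that the strict inequality comes from the product term even when $A=0$ or $B=0$, is also correct.
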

\begin{proof}
    Write
    \begin{align*}
        CD-AB&=(C-A)(D-B)+CB+AD-2AB\\
        &=(C-A)(D-B)+(C-A)B+A(D-B).
    \end{align*}
    This immediately implies
    \[
    \|CD-AB\|<\delta^2+\delta\|A\|+\delta\|B\|=\delta(\delta+\|A\|+\|B\|)\le\delta(1+\|A\|+\|B\|)\le\varepsilon,
    \]
    where we used the fact that $\delta\le1$.
\end{proof}

\begin{lemma}[Continuity of operator inversion]\label{cont.inv}
    Let $X$ be a Banach space and let $A$ be a bounded, invertible operator on $X$. Then for every $\varepsilon>0$ and every $B\in\mathcal B(X)$ satisfying
    \[
    \|B-A\|<\delta:=\min\left\{\frac{1}{2\|A^{-1}\|},\frac{\varepsilon}{4\|A^{-1}\|^2}\right\}
    \]
    one has that $B$ is invertible and $\|B^{-1}-A^{-1}\|<\varepsilon$.
\end{lemma}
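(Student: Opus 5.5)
The plan is to derive everything from Lemma \ref{norm.of.inverse} and the resolvent identity. Since $\delta\le\frac{1}{2\|A^{-1}\|}<\|A^{-1}\|^{-1}$, the hypothesis $\|B-A\|<\delta$ lets us invoke Lemma \ref{norm.of.inverse} directly. That lemma gives that $B$ is bijective, and the bounded inverse theorem then gives that $B$ is invertible in $\mathcal B(X)$. It also yields the quantitative bound
\[
\|B^{-1}\|\le\frac{1}{\|A^{-1}\|^{-1}-\|B-A\|}<\frac{1}{\|A^{-1}\|^{-1}-\frac{1}{2}\|A^{-1}\|^{-1}}=2\|A^{-1}\|.
\]

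Next I will write the difference of the inverses via the algebraic identity
\[
B^{-1}-A^{-1}=B^{-1}(A-B)A^{-1},
\]
which is checked by multiplying out: $B^{-1}AA^{-1}-B^{-1}BA^{-1}=B^{-1}-A^{-1}$. Taking norms and using submultiplicativity gives
\[
\|B^{-1}-A^{-1}\|\le\|B^{-1}\|\,\|B-A\|\,\|A^{-1}\|.
\]

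Finally, I will insert the bound $\|B^{-1}\|<2\|A^{-1}\|$ together with the second part of the hypothesis, $\|B-A\|<\frac{\varepsilon}{4\|A^{-1}\|^2}$. This gives
\[
\|B^{-1}-A^{-1}\|<2\|A^{-1}\|\cdot\frac{\varepsilon}{4\|A^{-1}\|^2}\cdot\|A^{-1}\|=\frac{\varepsilon}{2}<\varepsilon.
\]

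There is no real obstacle here. The only point needing care is making the first branch of the minimum in $\delta$ produce the uniform bound $\|B^{-1}\|<2\|A^{-1}\|$, so that the $\varepsilon$-branch can absorb the remaining factors. The constant $4$ in $\delta$ even leaves a factor of $2$ to spare.
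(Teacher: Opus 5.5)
Your proposal is correct and matches the paper's proof essentially step for step: Lemma \ref{norm.of.inverse} gives $\|B^{-1}\|<2\|A^{-1}\|$, and the identity $B^{-1}-A^{-1}=B^{-1}(A-B)A^{-1}$ then yields the bound $\varepsilon/2<\varepsilon$. Your appeal to the bounded inverse theorem is harmless but not needed, since the Neumann series in Lemma \ref{norm.of.inverse} already produces a bounded inverse.
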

\begin{proof}
    Since $\|B-A\|<\frac{1}{2}\|A^{-1}\|^{-1}<\|A^{-1}\|^{-1}$, we already know from Lemma \ref{norm.of.inverse} that $B$ is invertible and
    \[
    \|B^{-1}\|\le\frac{1}{\|A^{-1}\|^{-1}-\|B-A\|}<\frac{1}{\|A^{-1}\|^{-1}-\frac{1}{2}\|A^{-1}\|^{-1}}=2\|A^{-1}\|.
    \]
    But then
    \begin{align*}
        \|B^{-1}-A^{-1}\|&=\|B^{-1}(A-B)A^{-1}\|\\
        &\le\|B^{-1}\|\|B-A\|\|A^{-1}\|\\
        &<2\delta\|A^{-1}\|^2\le\frac{\varepsilon}{2}<\varepsilon.\qedhere
    \end{align*}
\end{proof}
\begin{remark}
    It's easy to see from the last step of the previous proof that taking 
    \[
    \delta=\min\left\{\frac{1}{2\|A^{-1}\|},\frac{\varepsilon}{2\|A^{-1}\|^2}\right\}
    \]
    would be a more natural and optimal choice. However, we prefer to use $\frac{\varepsilon}{4\|A^{-1}\|^2}$ instead of $\frac{\varepsilon}{2\|A^{-1}\|^2}$ for consistency with the result concerning the continuity of the pseudo-inversion (see Lemma \ref{cont.pseudo-inv}).
\end{remark}

\begin{lemma}\label{diff.proj.norm}
    Let $\mathcal H$ be a Hilbert space and let $P,Q$ be two orthogonal projection of $\mathcal H$. Then
    \[
    \|P-Q\|=\max\{\|PQ^\perp\|,\|QP^\perp\|\}.
    \]
\end{lemma}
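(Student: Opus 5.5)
We need to prove $\|P-Q\|=\max\{\|PQ^\perp\|,\|QP^\perp\|\}$ for orthogonal projections $P,Q$, where $P^\perp=I-P$ and $Q^\perp=I-Q$.

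The plan is to split $P-Q$ into two pieces with mutually orthogonal ranges and mutually orthogonal initial spaces, and then compute the norm through the Pythagorean theorem. The key identity is
\[
P-Q=P(Q+Q^\perp)-(P+P^\perp)Q=PQ^\perp-P^\perp Q.
\]
Set $X:=PQ^\perp$ and $Y:=P^\perp Q$. Then $\ran(X)\subseteq\ran(P)$ and $\ran(Y)\subseteq\ran(P^\perp)$, so the two ranges are orthogonal. Also $X$ vanishes on $\ran(Q)$ and $Y$ vanishes on $\ran(Q^\perp)$.

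For $h\in\mathcal H$, decompose $h=Qh+Q^\perp h$. Then $(P-Q)h=XQ^\perp h-YQh$, and the two terms are orthogonal. Pythagoras gives
\[
\|(P-Q)h\|^2=\|XQ^\perp h\|^2+\|YQh\|^2\le\|X\|^2\|Q^\perp h\|^2+\|Y\|^2\|Qh\|^2\le\max\{\|X\|,\|Y\|\}^2\|h\|^2,
\]
since $\|Qh\|^2+\|Q^\perp h\|^2=\|h\|^2$. This proves $\|P-Q\|\le\max\{\|X\|,\|Y\|\}$.

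For the reverse inequality, use $X=P(P-Q)Q^\perp$, which follows from $PQ^\perp=P(P-Q)Q^\perp$ because $PQQ^\perp=0$ and $P^2=P$. Since projections have norm at most $1$, this gives $\|X\|\le\|P-Q\|$. Similarly $Y=P^\perp Q=-P^\perp(P-Q)Q$, so $\|Y\|\le\|P-Q\|$.

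Finally, the statement is phrased with $\|QP^\perp\|$ rather than $\|P^\perp Q\|$. These agree because $QP^\perp=(P^\perp Q)^*$ and an operator has the same norm as its adjoint.

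The only delicate points are keeping track of the orthogonality of both the ranges and the domain splitting, and this last adjoint step.
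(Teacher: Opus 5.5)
Your proof is correct and follows the paper's argument: the decomposition $P-Q=PQ^\perp-P^\perp Q$ with Pythagoras for the upper bound, the factorization of $PQ^\perp$ through $P-Q$ for the lower bound, and $\|P^\perp Q\|=\|QP^\perp\|$ via adjoints. The only cosmetic difference is that you bound $\|P^\perp Q\|$ and pass to the adjoint, whereas the paper bounds $\|QP^\perp\|$ directly through $QP^\perp=Q(Q-P)$.
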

\begin{proof}
    On one hand, we have
    \[
    PQ^\perp=P-PQ=P(P-Q),
    \]
    which immediately entails $\|PQ^\perp\|\le\|P-Q\|$ and, with similar reasoning, $\|QP^\perp\|\le\|P-Q\|$ as well. On the other hand, 
    \[
    P-Q=P-PQ+PQ-Q=PQ^\perp-P^\perp Q,
    \]
    and for every $x\in\mathcal H$ one has
    \begin{align*}
    \|(P-Q)x\|^2&=\|PQ^\perp x-P^\perp Qx\|^2=\|PQ^\perp x\|^2+\|P^\perp Qx\|^2\\
    &\le\|PQ^\perp\|^2\|Q^\perp x\|^2+\|P^\perp Q\|^2\|Qx\|^2\\
    &\le\left(\max\{\|PQ^\perp\|,\|P^\perp Q\|\}\right)^2(\|Q^\perp x\|^2+\|Qx\|^2)\\
    &=\left(\max\{\|PQ^\perp\|,\|P^\perp Q\|\}\right)^2\|x\|^2,
    \end{align*}
    and this proves $\|P-Q\|\le\max\{\|PQ^\perp\|,\|P^\perp Q\|\}$. Finally, $\|P^\perp Q\|=\|(P^\perp Q)^*\|=\|QP^\perp\|$.
\end{proof}

\begin{lemma}[Continuity of pseudo-inversion]\label{cont.pseudo-inv}
    Let $\mathcal H$ be a Hilbert space and fix a closed subspace $Y\subseteq\mathcal H$. Let $A\in\mathcal B(\mathcal H)$ such that $\ran(A)=Y$. Then for every $\varepsilon>0$ and every $B\in\mathcal B(\mathcal H)$ such that $\ran(B)=Y$ satisfying
    \[
    \|B-A\|<\delta:=\min\left\{\frac{1}{2\|A^\dagger\|},\frac{\varepsilon}{4\|A^\dagger\|^2}\right\}
    \]
    one has $\|B^\dagger-A^\dagger\|<\varepsilon$.
\end{lemma}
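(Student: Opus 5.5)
The plan is to prove the estimate directly for general (not necessarily self-adjoint) bounded $A,B$ with $\ran(A)=\ran(B)=Y$. It has three steps: an exact algebraic decomposition of $B^\dagger-A^\dagger$ that exploits the common range, a bound $\|B^\dagger\|<2\|A^\dagger\|$ obtained from the reduced minimum modulus, and a final estimate of the two resulting terms.

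\emph{Preliminaries.} Since $Y$ is closed, both $A^\dagger$ and $B^\dagger$ are bounded. We have $AA^\dagger=BB^\dagger=P_Y$, $A^\dagger A=P_{\ran(A^*)}$ and $B^\dagger B=P_{\ran(B^*)}$, where $\ran(A^*)$ and $\ran(B^*)$ are closed by the closed range theorem. Moreover $\ker(A^*)=\ker(B^*)=Y^\perp$. Note that the kernels of $A$ and $B$ may differ, so $A^\dagger A\neq B^\dagger B$ in general. This is exactly what prevents a verbatim copy of Lemma~\ref{cont.inv}.

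\emph{Step 1 (decomposition).} Since $B^\dagger=B^\dagger P_Y=B^\dagger AA^\dagger$, I split $A^\dagger=P_{\ran(B^*)}A^\dagger+(I-P_{\ran(B^*)})A^\dagger$ and use $P_{\ran(B^*)}=B^\dagger B$. This gives
\[
B^\dagger-A^\dagger=B^\dagger AA^\dagger-B^\dagger BA^\dagger-(I-P_{\ran(B^*)})A^\dagger=-B^\dagger(B-A)A^\dagger-(I-P_{\ran(B^*)})A^\dagger .
\]
For the second term I write $A^\dagger=(A^\dagger A)^*A^\dagger=A^*(A^\dagger)^*A^\dagger$. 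Since $(I-P_{\ran(B^*)})B^*=0$, it follows that
\[
(I-P_{\ran(B^*)})A^\dagger=(I-P_{\ran(B^*)})(A^*-B^*)(A^\dagger)^*A^\dagger,
\]
whose norm is at most $\|B-A\|\,\|A^\dagger\|^2$.

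\emph{Step 2 (uniform bound on $\|B^\dagger\|$; the main point).} I would argue through adjoints, because these share the kernel $Y^\perp$. For $y\in Y=\ker(A^*)^\perp$ we have
\[
\|A^*y\|\ge\|y\|/\|A^\dagger\|,
\]
since $\|(A^*)^\dagger\|=\|A^\dagger\|$ and $y=(A^*)^\dagger A^*y$. The hypothesis $\|B-A\|<\frac{1}{2\|A^\dagger\|}$ then gives
\[
\|B^*y\|\ge\|A^*y\|-\|B-A\|\,\|y\|>\frac{\|y\|}{2\|A^\dagger\|}\quad\text{for every } y\in Y\setminus\{0\}.
\]
Since $Y=\ker(B^*)^\perp$, this says $B^*$ is bounded below by $\frac{1}{2\|A^\dagger\|}$ on $\ker(B^*)^\perp$. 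Using $y=(B^*)^\dagger B^*y$, this yields $\|B^\dagger\|=\|(B^*)^\dagger\|\le2\|A^\dagger\|$. This is the step I expect to need the most care: one must work with $B^*$ rather than $B$, since $\ker(B)$ is not controlled by $\ker(A)$. Only the first entry of the minimum defining $\delta$ is used here.

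\emph{Step 3 (conclusion).} Combining Steps 1 and 2 gives
\[
\|B^\dagger-A^\dagger\|\le\|B^\dagger\|\,\|B-A\|\,\|A^\dagger\|+\|B-A\|\,\|A^\dagger\|^2\le3\|A^\dagger\|^2\|B-A\|<3\|A^\dagger\|^2\delta\le\frac34\varepsilon<\varepsilon .
\]
This explains the constant $4$ in $\delta$, as anticipated in the remark after Lemma~\ref{cont.inv}.
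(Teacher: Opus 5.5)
Your proof is correct, and its skeleton matches the paper's. Since $P_{\ran(A^*)}A^\dagger=A^\dagger$, your identity $B^\dagger-A^\dagger=-B^\dagger(B-A)A^\dagger-(I-P_{\ran(B^*)})A^\dagger$ is exactly the paper's decomposition $B^\dagger-A^\dagger=\bigl(B^\dagger(A-B)+P_{\ran(B^*)}-P_{\ran(A^*)}\bigr)A^\dagger$. You differ in how the two pieces are estimated.

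For $\|B^\dagger\|\le 2\|A^\dagger\|$, the paper uses a self-bounding trick on $B^\dagger=B^\dagger(A-B)A^\dagger+P_{\ran(B^*)}A^\dagger$. This gives $\|B^\dagger\|\le\frac12\|B^\dagger\|+\|A^\dagger\|$, which is legitimate because $B^\dagger$ is bounded, $Y$ being closed. You instead perturb the lower bound of $A^*$ on $Y=\ker(A^*)^\perp=\ker(B^*)^\perp$. That is precisely Lemma~\ref{cont.pseudo} applied to $T=B^*$ with $c=1/(2\|A^\dagger\|)$. Its proof, via $B^*(B^*)^\dagger z=P_{\ran(B^*)}z$ and $(B^*)^\dagger z\in Y$, supplies the detail compressed in your phrase ``using $y=(B^*)^\dagger B^*y$''.

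For the projection term, the paper bounds the whole difference $\|P_{\ran(B^*)}-P_{\ran(A^*)}\|$ through Lemma~\ref{diff.proj.norm}. That involves both cross terms and hence $\|B^\dagger\|$, giving $2\delta\|A^\dagger\|^2$ and the final constant $4$. Your factorization $A^\dagger=A^*(A^\dagger)^*A^\dagger$, together with $(I-P_{\ran(B^*)})B^*=0$, shows that only the cross term $P_{\ker B}A^\dagger$ matters. It bounds that term by $\|B-A\|\,\|A^\dagger\|^2$ without needing Step 2 or Lemma~\ref{diff.proj.norm}.

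So your route is slightly more economical and yields the constant $3$; one could take $\varepsilon/(3\|A^\dagger\|^2)$ in $\delta$. Your closing sentence that this ``explains the constant $4$'' is therefore slightly off. The $4$ comes from the paper's cruder treatment of the projection difference, not from your argument.
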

\begin{proof}
    It's known that $\ker(B^\dagger)=\ker(B^*)$: this allows us to write
    \[
    B^\dagger=B^\dagger P_{\ker(B^*)^\perp}=B^\dagger P_{Y}=B^\dagger P_{Y}-P_{\ran(B^*)}A^\dagger+P_{\ran(B^*)}A^\dagger.
    \]
    Use the properties $P_{Y}=AA^\dagger$ and $P_{\ran(B^*)}=B^\dagger B$ to write
    \[
    B^\dagger=B^\dagger AA^\dagger-B^\dagger BA^\dagger+P_{\ran(B^*)}A^\dagger=B^\dagger(A-B)A^\dagger+P_{\ran(B^*)}A^\dagger.
    \]
    It follows that $\|B^\dagger\|\le\|B^\dagger\|\|A^\dagger\|\delta+\|A^\dagger\|\le\frac{1}{2}\|B^\dagger\|+\|A^\dagger\|$, hence $\|B^\dagger\|\le2\|A^\dagger\|$. From the previous computations we also deduce
    \begin{align*}
        B^\dagger-A^\dagger&=B^\dagger(A-B)A^\dagger+P_{\ran(B^*)}A^\dagger-A^\dagger\\
        &=B^\dagger(A-B)A^\dagger+P_{\ran(B^*)}A^\dagger-P_{\ran(A^*)}A^\dagger\\
        &=(B^\dagger(A-B)+P_{\ran(B^*)}-P_{\ran(A^*)})A^\dagger,
    \end{align*}
    where we used that $\ran(A^\dagger)=\ran(A^*)$. By Lemma \ref{diff.proj.norm}, we have 
    \begin{align*}
        \|P_{\ran(B^*)}-P_{\ran(A^*)}\|&=\max\{\|P_{\ran(B^*)}P_{\ran(A^*)^\perp}\|,\|P_{\ran(A^*)}P_{\ran(B^*)^\perp}\|\}\\
        &=\max\{\|B^\dagger BP_{\ker(A)}\|,\|A^\dagger AP_{\ker(B)}\|\}\\
        &=\max\{\|B^\dagger(B-A)P_{\ker(A)}\|,\|A^\dagger (B-A)P_{\ker(B)}\|\}\\
        &\le\max\{\|A^\dagger\|,\|B^\dagger\|\}\|B-A\|<2\delta\|A^\dagger\|.
    \end{align*}
    This allows us to conclude:
    \begin{align*}
        \|B^\dagger-A^\dagger\|&<(\|B^\dagger\|\delta+\|P_{\ran(B^*)}-P_{\ran(A^*)}\|)\|A^\dagger\|\\
        &\le(2\delta\|A^\dagger\|+2\delta\|A^\dagger\|)\|A^\dagger\|\\
        &=4\delta\|A^\dagger\|^2\le\varepsilon.\qedhere
    \end{align*}
\end{proof}

\begin{lemma}\label{cont.pseudo}
    Let $T\colon\mathcal H\longrightarrow\mathcal K$ be a linear operator between Hilbert spaces with closed range such that $\|Tx\|\ge c\|x\|$ for some $c>0$ for every $x\in\ker(T)^\perp$. Then the pseudo-inverse $T^\dagger\colon\mathcal K\longrightarrow\mathcal H$ is bounded and $\|T^\dagger\|\le\frac{1}{c}$.
\end{lemma}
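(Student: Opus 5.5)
The plan is to write down $T^\dagger$ explicitly on the decomposition $\mathcal K=\ran(T)\oplus\ran(T)^\perp$ and reduce the bound to the lower estimate assumed on $\ker(T)^\perp$. Recall that for a closed-range operator the Moore--Penrose pseudo-inverse is characterized as follows. It vanishes on $\ran(T)^\perp$. On $\ran(T)$ it coincides with the inverse of the restriction
\[
T_0:=T|_{\ker(T)^\perp}\colon\ker(T)^\perp\longrightarrow\ran(T).
\]
Hence $T^\dagger=T_0^{-1}P_{\ran(T)}$. Since $\ran(T)$ is closed, it is itself a Hilbert space, and the orthogonal projection $P_{\ran(T)}$ is well defined and bounded with norm at most $1$.

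The first step is to check that $T_0$ is a bijection. It is injective because $\ker(T_0)=\ker(T)\cap\ker(T)^\perp=\{0\}$; alternatively, injectivity follows directly from the hypothesis $\|Tx\|\ge c\|x\|$ on $\ker(T)^\perp$. For surjectivity, take $y=Th\in\ran(T)$ and decompose $h=h_0+h_1$ with $h_0\in\ker(T)$ and $h_1\in\ker(T)^\perp$. Then $y=Th_1=T_0h_1$.

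The main step, which is still short, is to bound $T_0^{-1}$. Given $y\in\ran(T)$, set $x:=T_0^{-1}y\in\ker(T)^\perp$. The hypothesis gives $\|y\|=\|Tx\|\ge c\|x\|$, so $\|T_0^{-1}y\|\le\frac1c\|y\|$. Combining this with $\|P_{\ran(T)}\|\le1$ yields
\[
\|T^\dagger k\|=\|T_0^{-1}P_{\ran(T)}k\|\le\tfrac1c\|P_{\ran(T)}k\|\le\tfrac1c\|k\|\quad\text{for every }k\in\mathcal K.
\]
This proves both that $T^\dagger$ is bounded and that $\|T^\dagger\|\le\frac1c$.

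The only delicate point concerns $T$ itself. Boundedness of $T$ is not assumed, and $\ker(T)$ could fail to be closed if $T$ were unbounded. I would handle this by interpreting $\ker(T)^\perp$ as the orthogonal complement of $\overline{\ker(T)}$, or, as in all uses in the paper, by taking $T$ bounded so that $\ker(T)$ is closed. With either reading, the argument above goes through unchanged, and the closed-range hypothesis is exactly what makes $P_{\ran(T)}$, and hence $T^\dagger$, defined on all of $\mathcal K$.
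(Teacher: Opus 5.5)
Your argument is correct and is essentially the paper's. The paper invokes the standard identities $TT^\dagger=P_{\ran(T)}$ and $\ran(T^\dagger)=\ker(T)^\perp$ to write $\|y\|\ge\|TT^\dagger y\|\ge c\|T^\dagger y\|$; this is your bound on $T_0^{-1}P_{\ran(T)}$, with the construction of $T^\dagger$ left implicit.

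One correction to your last paragraph. Since $\ker(T)^\perp=\overline{\ker(T)}^\perp$ always, your first ``reading'' changes nothing. When $\ker(T)$ is not closed, your surjectivity step genuinely fails: for a discontinuous linear functional, $\ker(T)^\perp=\{0\}$ while $\ran(T)=\C$. So what makes both your proof and the paper's work is that $\ker(T)$ is closed, e.g.\ $T$ bounded, as the paper intends.
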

\begin{proof}
    Let $y\in\mathcal K$. By a well-known property of pseudo-inverses, $P_{R(T)}y=TT^\dagger y$. Taking the norm of both sides, we get
    \[
    \|y\|\ge\|P_{R(T)}y\|=\|TT^\dagger y\|\ge c\|T^\dagger y\|,
    \]
    where we used the fact that $\|P_{R(T)}y\|\le\|y\|$ and $T^\dagger y\in\ran(T^\dagger)=\ker(T)^\perp$. The claim follows by dividing both sides by $c$.
\end{proof}

We conclude with basic properties of the Hermitian dilation of an operator (see Definition~\ref{def:Herm}).

\begin{prop}\label{hermitian.dilation}
    Let $T\colon\mathcal H_1\longrightarrow\mathcal H_2$ be a bounded operator between Hilbert spaces. Then:
    \begin{enumerate}
        \item $H(T)\in\mathcal B(\mathcal H_1\oplus\mathcal H_2)$ and $\|H(T)\|_{\mathcal B(\mathcal H_1\oplus\mathcal H_2)}=\|T\|_{\mathcal B(\mathcal H_1,\mathcal H_2)}$.
        \item $H(T)$ is self-adjoint.
        \item $H\colon\mathcal B(\mathcal H_1,\mathcal H_2)\longrightarrow\mathcal B(\mathcal H_1\oplus\mathcal H_2)$ is $\R$-linear.
    \end{enumerate}
\end{prop}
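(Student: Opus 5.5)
We verify the three claims of Proposition~\ref{hermitian.dilation} in turn, working with the block form of $H(T)$ on $\mathcal H_1\oplus\mathcal H_2$ equipped with the inner product $\<(x,y),(x',y')\>=\<x,x'\>_{\mathcal H_1}+\<y,y'\>_{\mathcal H_2}$.

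\emph{Item (2), self-adjointness.} This is a direct computation. For $(x,y),(x',y')\in\mathcal H_1\oplus\mathcal H_2$ we have
\[
\<H(T)(x,y),(x',y')\>=\<T^*y,x'\>+\<Tx,y'\>=\<y,Tx'\>+\<x,T^*y'\>=\<(x,y),H(T)(x',y')\>,
\]
so $H(T)=H(T)^*$.

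\emph{Item (1), boundedness and norm.} Linearity of $H(T)$ is clear. For the upper bound, note that
\[
\|H(T)(x,y)\|^2=\|T^*y\|^2+\|Tx\|^2\le\|T\|^2(\|x\|^2+\|y\|^2),
\]
using $\|T^*\|=\|T\|$. This gives boundedness and $\|H(T)\|\le\|T\|$. For the reverse inequality, test on vectors of the form $(x,0)$ with $\|x\|=1$: then $\|H(T)(x,0)\|=\|Tx\|$. Taking the supremum over such $x$ yields $\|H(T)\|\ge\|T\|$.

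\emph{Item (3), $\R$-linearity.} For real $a,b$ and $T_1,T_2\in\mathcal B(\mathcal H_1,\mathcal H_2)$, we have $(aT_1+bT_2)^*=aT_1^*+bT_2^*$, which gives $H(aT_1+bT_2)=aH(T_1)+bH(T_2)$ componentwise. Only real scalars are allowed because the adjoint is conjugate-linear: for a complex scalar $c$, $(cT)^*=\bar c\,T^*$, so $H$ fails to be $\C$-linear. This is the one subtle point, and it explains why the statement asserts only $\R$-linearity.

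None of these steps presents a real obstacle. The main thing requiring care is the norm identity in item (1), where both inequalities must be checked; the rest is routine computation.
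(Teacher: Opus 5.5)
Your proposal is correct and follows essentially the same route as the paper. Both prove the norm identity via $\|H(T)(x,y)\|^2=\|T^*y\|^2+\|Tx\|^2$ together with test vectors $(x,0)$, prove self-adjointness by direct inner-product computation, and prove $\R$-linearity from $(aT_1+bT_2)^*=aT_1^*+bT_2^*$ for real scalars, with the same remark that conjugate-linearity of the adjoint rules out $\C$-linearity.
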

\begin{proof}
    \begin{enumerate}
        \item $H(T)$ is clearly linear. Moreover, for every $(x,y)\in\mathcal H_1\oplus\mathcal H_2$ we have
        \begin{align*}
            \|H(T)(x,y)\|^2_{\mathcal H_1\oplus\mathcal H_2}&=\|(T^*y,Tx)\|^2_{\mathcal H_1\oplus\mathcal H_2}\\
            &=\|T^*y\|^2_{\mathcal H_1}+\|Tx\|^2_{\mathcal H_2}\\
            &\le\|T\|_{B(\mathcal H_1,\mathcal H_2)}^2(\|x\|_{\mathcal H_1}^2+\|y\|_{\mathcal H_2}^2)\\
            &=\|T\|_{B(\mathcal H_1,\mathcal H_2)}^2\|(x,y)\|_{\mathcal H_1\oplus\mathcal H_2}^2.
        \end{align*}
        It follows that $H(T)$ is bounded and $\|H(T)\|_{\mathcal B(\mathcal H_1\oplus\mathcal H_2)}\le\|T\|_{\mathcal B(X,Y)}$. Conversely, for every $x\in\mathcal H_1$ one has
        \begin{align*}
            \|Tx\|_{\mathcal H_2}^2&=\|T^*0\|_{\mathcal H_1}^2+\|Tx\|_{\mathcal H_2}^2\\
            &=\|(T^*0,Tx)\|_{\mathcal H_1\oplus\mathcal H_2}^2\\
            &=\|H(T)(x,0)\|_{\mathcal H_1\oplus\mathcal H_2}^2\\
            &\le\|H(T)\|_{\mathcal B(\mathcal H_1\oplus\mathcal H_2)}^2\|(x,0)\|_{\mathcal H_1\oplus\mathcal H_2}^2\\
            &=\|H(T)\|_{\mathcal B(\mathcal H_1\oplus\mathcal H_2)}^2\|x\|_{\mathcal H_1}^2,
        \end{align*}
        and this concludes that $\|H(T)\|_{\mathcal B(\mathcal H_1\oplus\mathcal H_2)}=\|T\|_{\mathcal B(\mathcal H_1,\mathcal H_2)}$.
        \item For every $(x_1,y_1),(x_2,y_2)\in\mathcal H_1\oplus\mathcal H_2$ one has
        \begin{align*}
            \<(x_1,y_1),H(T)(x_2,y_2)\>_{\mathcal H_1\oplus\mathcal H_2}&=\<(x_1,y_1),(T^*y_2,Tx_2)\>_{\mathcal H_1\oplus\mathcal H_2}\\
            &=\<x_1,T^*y_2\>_{\mathcal H_1}+\<y_1,Tx_2\>_{\mathcal H_2}\\
            &=\<Tx_1,y_2\>_{\mathcal H_2}+\<T^*y_1,x_2\>_{\mathcal H_1}\\
            &=\<(T^*y_1,Tx_1),(x_2,y_2)\>_{\mathcal H_1\oplus\mathcal H_2}\\
            &=\<H(T)(x_1,y_1),(x_2,y_2)\>_{\mathcal H_1\oplus\mathcal H_2},
        \end{align*}
        hence $H(T)^*=H(T)$.
        \item Let $T_1,T_2\in\mathcal B(\mathcal H_1,\mathcal H_2)$ and let $\alpha,\beta\in\R$. For every $(x,y)\in\mathcal H_1\oplus\mathcal H_2$ we have
        \begin{align*}
            H(\alpha T_1+\beta T_2)(x,y)&=((\alpha T_1+\beta T_2)^*y,(\alpha T_1+\beta T_2)x)\\
            &=(\alpha T_1^*y+\beta T_2^*y,\alpha T_1x+\beta T_2x)\\
            &=\alpha(T_1^*y,T_1x)+\beta(T_2^*y;T_2x)\\
            &=\alpha H(T_1)(x,y)+\beta H(T_2)(x,y).
        \end{align*}
        This proves $H(\alpha T_1+\beta T_2)=\alpha H(T_1)+\beta H(T_2)$ (note that $H$ is not $\C$-linear, as in the second equality we would have had to consider the complex conjugate of $\alpha$ and $\beta$).\qedhere
    \end{enumerate}
\end{proof}

\end{document}